\newtheorem{thm}{Theorem}[section]
\newtheorem{dfn}[thm]{Definition}
\newtheorem{lem}[thm]{Lemma}
\newtheorem{propn}[thm]{Proposition}
\newtheorem{prop}[thm]{Proposition}
\newtheorem{rem}[thm]{Remark}
\newcommand{\Pb}{\mathbf{P}}
\newcommand{\mz}{\mathbb{Z}^{d}}
\newcommand{\hpl}[2]{\mathbb{H}^{(#1)}_{#2}}
\begin{document}

\title{Annealed transition density of simple random walk\\on a high-dimensional loop-erased random walk}
\author{D.\ A.\ Croydon\thanks{Research Institute for Mathematical Sciences, Kyoto University, Japan, croydon@kurims.kyoto-u.ac.jp}, D.\ Shiraishi\thanks{Graduate School of Informatics, Kyoto University, Kyoto, Japan, shiraishi@acs.i.kyoto-u.ac.jp}, S.\ Watanabe\thanks{Graduate School of Informatics, Kyoto University, Kyoto, Japan, watanabe@acs.i.kyoto-u.ac.jp}}
\maketitle

\begin{abstract}
\noindent
We derive sub-Gaussian bounds for the annealed transition density of the simple random walk on a high-dimensional loop-erased random walk. The walk dimension that appears in these is the exponent governing the space-time scaling of the process with respect to the extrinsic Euclidean distance, which contrasts with the exponent given by the intrinsic graph distance that appears in the corresponding quenched bounds. We prove our main result using novel pointwise Gaussian estimates on the distribution of the high-dimensional loop-erased random walk.\\
\textbf{AMS 2020 MSC:} 60K37 (primary), 05C81, 60J35, 60K35, 82B41.\\
\textbf{Keywords and phrases:} heat kernel estimates, loop-erased random walk, random walk in random environment.
\end{abstract}

\section{Introduction}

The main result of this article provides annealed heat kernel estimates for the random walk on the trace of a loop-erased random walk in high dimensions. Whilst this is something of a toy model, the statement reveals an interesting difference between the quenched (typical) heat kernel estimates, which are of Gaussian form with respect to the intrinsic graph metric, and the annealed (averaged) heat kernel estimates, which are of sub-Gaussian form with respect to the extrinsic Euclidean metric. (We will present more detailed background concerning this terminology below.) Establishing such a discrepancy rigourously was motivated by a conjecture made in \cite[Remark 1.5]{BCK2} for the two-dimensional uniform spanning tree, and naturally leads one to consider to what extent the behaviour is typical for random walks on random graphs embedded into an underlying space.

Before describing further our heat kernel estimates, we would like to present the key input into their proof, which is a time-averaged Gaussian bound on the distribution of the loop-erased random walk. In particular, throughout the course of the article, we let $(L_n)_{n\geq 0}$ be the loop-erasure of the discrete-time simple random walk $(S_n)_{n\geq 0}$ on $\mathbb{Z}^d$, where $d\geq 5$, started from the origin. (See Section \ref{sec2} for a precise definition of this process, which was originally introduced by Lawler in \cite{Lawduke}.) In the dimensions we are considering, it is known that the loop-erased random walk (LERW) behaves similarly to the original simple random walk (SRW), in that both have a Brownian motion scaling limit; for the SRW, this follows from the classic invariance principle of Donsker \cite{Donsker}, whereas for the LERW, it was proved in \cite{Lawduke}. Taking into account the diffusive scaling, such results readily imply convergence of the distributions of $n^{-1/2}S_n$ and $n^{-1/2}L_n$ as $n\rightarrow\infty$. For our heat kernel estimates, we will require understanding of the distribution of LERW on a finer, pointwise scale. Now, one can check that SRW satisfies pointwise Gaussian bounds of the form
\begin{equation}\label{srwbound0}
c  n^{-d/2}e^{-\frac{|x|^2}{cn}}\mathbf{1}_{\{n\geq \|x\|_1\}}\leq\frac{\mathbf{P}(S_n=x)+\mathbf{P}(S_{n+1}=x)}{2}\leq c^{-1} n^{-d/2}e^{-\frac{c|x|^2}{n}}, \qquad \forall x\in \mathbb{Z}^d,\: n\geq 1,
\end{equation}
where $c$ is a constant and we write $\|x\|_1$ for the $\ell_1$-norm of $x$, see \cite[Theorem 6.28]{Barbook}, for example. (The averaging over two time steps is necessary for parity reasons.) Of course, one can not expect the same bounds for a LERW. Indeed, the `on-diagonal' part of the distribution $\mathbf{P}(L_n=0)$ is equal to zero for $n\geq 1$. What we are able to establish is the following, which demonstrates that if one averages over longer time intervals, then one still sees Gaussian behaviour.

\begin{thm}\label{thm:main1}
The loop-erased random walk $(L_n)_{n\geq 0}$ on $\mathbb{Z}^d$, $d\geq 5$, started from the origin satisfies the following bounds: for all $x\in \mathbb{Z}^d\backslash\{0\}$, $n\geq 1$,
\[\frac{1}{n}\sum_{m=n}^{2n-1}\mathbf{P}(L_m=x)\leq c_1 n^{-d/2}e^{-\frac{c_2|x|^2}{n}},\]
and for all $x\in \mathbb{Z}^d\backslash\{0\}$, $n\geq |x|$,
\[\frac{1}{n}\sum_{m=\lceil c_3n\rceil}^{\lfloor c_4n\rfloor}\mathbf{P}(L_m=x)\geq c_5  n^{-d/2}e^{-\frac{c_6|x|^2}{n}},\]
where $c_1,\dots,c_6$ are constants.
\end{thm}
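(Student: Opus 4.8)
The plan is to relate the distribution of $L_m$ to quantities that are already well understood from the existing LERW literature in high dimensions: the Green's function of LERW, the one-point function $\mathbf{P}(x \in L[0,\infty))$, and the number of steps the LERW takes to reach a given point. The key structural fact is that, for $d \geq 5$, the infinite LERW $(L_n)_{n \geq 0}$ is transient and each point it visits is visited exactly once, so $\mathbf{P}(L_m = x) = \mathbf{P}(x \in L[0,\infty),\ \ell(x) = m)$, where $\ell(x)$ denotes the (random) index at which $x$ is hit, defined on the event $\{x \in L[0,\infty)\}$. Hence $\sum_{m} \mathbf{P}(L_m = x) = \mathbf{P}(x \in L[0,\infty))$, and summing $\mathbf{P}(L_m=x)$ over a block $m \in [n, 2n)$ amounts to controlling the conditional law of the hitting time $\ell(x)$ given $x \in L[0,\infty)$.

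For the upper bound, I would split according to whether $|x|^2 \leq Kn$ or not. For $|x|^2 > Kn$ with $K$ large, I would obtain a crude exponential estimate: if $L_m = x$ then the SRW $S$ must, before its $m$-th ``surviving'' step, have travelled Euclidean distance $|x|$ in at most $m < 2n$ steps (since loop-erasure only removes steps, $L_m = x$ forces the original walk to have reached $x$ by a time comparable to or larger than $m$ — more precisely the number of original steps is at least $m$), and large-deviation bounds for SRW of the form in \eqref{srwbound0} give a factor $e^{-c|x|^2/n}$ that dominates. This reduces matters to the regime $|x| \leq C\sqrt{n}$, where the Gaussian factor is bounded below by a constant and one only needs the polynomial bound $\frac{1}{n}\sum_{m=n}^{2n-1} \mathbf{P}(L_m = x) \leq c\, n^{-d/2}$. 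This in turn should follow from known estimates on the LERW Green's function / one-point function: $\mathbf{P}(x \in L[0,\infty)) \asymp |x|^{2-d}$ type bounds (the LERW two-point / escape probability estimates of Lawler, and the more recent sharp asymptotics), combined with a lower bound on the ``spread'' of the hitting time — i.e. that conditionally on $x \in L[0,\infty)$ with $|x| \leq C\sqrt n$, the hitting index $\ell(x)$ is spread out over a range of length $\gtrsim n$, so no single block of length $n$ carries more than a constant multiple of the total mass $|x|^{2-d} \asymp n^{1-d/2}$; dividing by $n$ gives $n^{-d/2}$.

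For the lower bound (valid for $n \geq |x|$), I would argue that with probability bounded below, the LERW both passes through $x$ and does so at a time in a fixed window $[c_3 n, c_4 n]$. The natural route is via the SRW: run $S$ for order $n$ steps; with positive probability it is, say, within distance $\sqrt n$ of a deterministic path that goes out to $x$ and the loop erasure of the first $\Theta(n)$ steps contains $x$ at an index of order $n$. Quantitatively one conditions on $x \in L[0,\infty)$ and shows the conditional law of $\ell(x)/|x|^2$ (or $\ell(x)/n$ in the regime $|x| \asymp \sqrt n$, and $\ell(x)$ of order $|x|^2$ up to logarithms more generally — here the hypothesis $n \geq |x|$, i.e. $n \geq |x| \gg |x|^2/n$ only when $|x| \leq \sqrt n$... so really the interesting regime for the lower bound after rescaling is again $|x| \lesssim \sqrt n$, with the Gaussian factor absorbing the rest) puts mass bounded below on an interval of the form $[c_3 n, c_4 n]$. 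Concretely: use the convergence of $n^{-1/2} L_{\lfloor nt \rfloor}$ to (a time-change of) Brownian motion together with the fact that the number of loop-erased steps up to SRW-time $n$ is of order $n$, to deduce that the rescaled LERW spends a non-trivial amount of time near any point at Euclidean distance $\asymp \sqrt n$; then transfer this to a pointwise statement using the local limit / Gaussian-type lower bound machinery for LERW. The $\mathbf{1}_{\{n \geq \|x\|_1\}}$-type constraint is automatic here from $n \geq |x|$.

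The main obstacle I anticipate is the lower bound in the genuinely pointwise (not rescaled) sense: passing from ``the LERW has probability $\gtrsim 1$ of being near $x$ at time $\asymp n$'' to ``$\mathbf{P}(L_m = x) \gtrsim n^{-d/2}$ for a positive fraction of $m \in [c_3 n, c_4 n]$'' requires genuine local control — one must rule out that the mass $\mathbf{P}(x \in L[0,\infty)) \asymp |x|^{2-d}$ is concentrated on an atypically short time window or spread too thin, and one must handle the anti-concentration/parity issues for LERW (which, unlike SRW, is not a Markov chain and has no simple parity structure). I expect this to require a second-moment or coupling argument: couple the LERW path near $x$ with an independent SRW excursion, use separation lemmas (Lawler's domain Markov property for LERW and quasi-independence of loop erasures in disjoint regions) to get both an upper bound on $\sum_{m \in [c_3 n, c_4 n]} \mathbf{P}(L_m=x)^2$-type quantities or on $\sum_m m\, \mathbf{P}(L_m = x)$ versus $\left(\sum_m \mathbf{P}(L_m=x)\right)^2$, and conclude by Paley–Zygmund. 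The upper bound, by contrast, I expect to be comparatively routine given \eqref{srwbound0} and standard LERW Green's function estimates.
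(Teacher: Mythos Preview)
Your upper bound in the regime $|x|^2 \gg n$ contains a genuine error. You write that ``$L_m = x$ forces the original walk to have reached $x$ by a time comparable to or larger than $m$'', which is correct: if $L_m = x$ then $S_{\sigma_m} = x$ with $\sigma_m \geq m$. But this is the \emph{wrong direction} for a large-deviation bound. The SRW may take arbitrarily many steps and still have its loop-erasure reach $x$ at LERW-index $m \leq 2n$; nothing in \eqref{srwbound0} controls $\mathbf{P}(S_k = x \text{ for some } k \geq n)$ by $e^{-c|x|^2/n}$ when $|x|^2 \gg n$ --- that probability is only polynomially small in $|x|$. The exponential factor $e^{-c|x|^2/n}$ is \emph{not} a SRW large-deviation statement; it is a statement that the LERW cannot traverse a distance $|x|$ in many fewer than $|x|^2$ steps, i.e.\ that loop-erasure does not compress the path too much. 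The paper obtains this via a separate argument (Lemma \ref{ds-lem-2}): it tiles the segment from $0$ toward $x$ by $\Theta(M^{-1})$ concentric boxes of scale $M|x|$ and shows, using an external input from \cite{BJ}, that in each shell the LERW picks up at least $cM^2|x|^2$ steps with uniformly positive conditional probability; chaining these via an exponential-martingale inequality gives $\mathbf{P}(t_x \leq M|x|^2) \leq Ce^{-c/M}$. This is then multiplied by the hitting probability $\leq C|x|^{2-d}$ (your $\mathbf{P}(x \in L)$ bound) to get the full upper bound. Your proposal skips precisely this step.

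Your upper bound in the complementary regime $|x|^2 \lesssim n$ is closer, but the ``spread'' argument you propose is unnecessary and somewhat circular. The paper simply uses $\sigma_m \geq m$ to bound $\sum_{m \geq n}\mathbf{P}(L_m = x) \leq \sum_{k \geq n}\mathbf{P}(S_k = x) \leq Cn^{1-d/2}$ directly from the SRW heat kernel; no conditional anti-concentration of $\ell(x)$ is needed. For the lower bound, you correctly identify the difficulty and gesture toward second-moment and separation ideas, but the paper's route is quite different from the soft scaling-limit approach you outline: it builds explicit ``tube'' events in which $S$ is forced through a chain of boxes from $0$ to $x$ (and beyond, to prevent later erasure of $x$), uses cut-time estimates in each box to certify both upper and lower bounds on the length of the loop-erasure there, and multiplies conditional probabilities along the chain via the strong Markov property. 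The second-moment argument appears only locally, to lower-bound the number of cut points inside a single box (Lemma \ref{sw-lem-1}); there is no global Paley--Zygmund step and no appeal to the Brownian scaling limit.
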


\begin{rem}
Another random path known to scale to Brownian motion in dimensions $\geq 5$ is the self-avoiding walk (SAW) \cite{HarSl, HarSl2}. A local central limit theorem for a `spread out' version of the SAW was established in \cite{HS}. For a precise statement, one should refer to \cite[Theorem 1.3]{HS}, but roughly it was shown that if $c_n(x)$ is the number of $n$-step self-avoiding walk paths from 0 to $x$ and $c_n:=\sum_{x\in\mathbb{Z}^d} c_n(x)$, then, for appropriate constants $c_1$ and $c_2$, as $n\rightarrow \infty$,
\[\frac{1}{(2r_n+1)^d}\sum_{y\in B_\infty(\sqrt{n} x, r_n)}\frac{c_n(y)}{c_n}\sim c_1n^{-d/2}e^{-\frac{c_2|x|^2}{2}},\]
where $B_\infty(x,r)$ is an $\ell_\infty$-ball and $r_n$ is a divergent sequence satisfying $r_n=o(n^{1/2})$. (We note that the term $z_c^n$ in \cite{HS} can be replaced by $1/c_n$ by applying \cite[Theorem 1.1]{HS2}.) In particular, the left-hand side here can be thought of as the probability a self-avoiding walk of length $n$ is close to $x$, with the ball over which the spatial averaging is conducted being microscopic compared with the diffusive scaling of the functional scaling limit. For the LERW of this article, one might ask if it was similarly possible to reduce the intervals over which time averaging is conducted in Theorem \ref{thm:main1} to ones of the form $\{n,n+1,\dots,n+t_n\}$ where $t_n=o(n)$. And, for both the LERW and the SAW, one could ask for what ranges of the variables a pointwise space-time bound as at \eqref{srwbound0} holds. See \cite{ABR} for a result in this direction concerning the weakly self-avoiding walk.
\end{rem}

We now introduce our model of random walk in a random environment. To this end, given a realisation of $(L_n)_{n\geq 0}$, we define a graph $\mathcal{G}$ to have vertex set
\[V(\mathcal{G}):=\left\{L_n:\:n\geq 0\right\},\]
and edge set
\[E(\mathcal{G}):=\left\{\{L_n,L_{n+1}\}:\:n\geq 0\right\}.\]
We then let $(X^\mathcal{G}_t)_{t\geq 0}$ be the continuous-time random walk on $\mathcal{G}$ that has unit mean exponential holding times at each site and jumps from its current location to a neighbour with equal probability being placed on each of the possibilities. Moreover, we will always suppose that $X_0^\mathcal{G}=L_0=0$. Clearly, since $\mathcal{G}$ equipped with its shortest-path graph metric $d_\mathcal{G}$ is isometric to $\mathbb{Z}_+$, it is straightforward to deduce that, for each realisation of $L$,
\begin{equation}\label{bms}
\left(n^{-1/2}d_\mathcal{G}\left(0,X_{nt}^\mathcal{G}\right)\right)_{t\geq 0}\rightarrow \left(|B_{t}|\right)_{t\geq 0},
\end{equation}
in distribution, where $(B_t)_{t\geq 0}$ is a one-dimensional Brownian motion started from 0, and we can also give one-dimensional Gaussian bounds for the transition probabilities of $X^\mathcal{G}$ on $\mathcal{G}$ (see Lemma \ref{qtlem} below). In this article, though, we are interested in the annealed situation, when we average out over realisations of $\mathcal{G}$. Precisely, we define the annealed law of $X^\mathcal{G}$ to be the probability measure on the Skorohod space $D(\mathbb{R}_+,\mathbb{R}^d)$ given by
\[\mathbb{P}\left(X^\mathcal{G}\in \cdot\right)=\int {P}^\mathcal{G}\left(X^\mathcal{G}\in \cdot\right)\mathbf{P}(d\mathcal{G}),\]
where $\mathbf{P}$ is the probability measure on the underlying probability space upon which $L$ is built, and $P^\mathcal{G}$ is the law of $X^\mathcal{G}$ on the particular realisation of $\mathcal{G}$ (i.e.\ the quenched law of $\mathcal{G}$). We are able to prove the following. Note that we use the notation $x\vee y:=\max\{x,y\}$ and $x\wedge y:=\min\{x,y\}$.

\begin{thm}\label{mainres}
For any $\varepsilon>0$, there exist constants $c_1,c_2,c_3,c_4\in(0,\infty)$ such that, for every $x\in\mathbb{Z}^d$ and $t\geq \varepsilon |x|$,
\[\mathbb{P}\left(X_t^\mathcal{G}=x\right)\leq c_1\left(1\wedge|x|^{2-d}\right)\left(1\wedge t^{-1/2}\right)\exp\left(-c_2\left(\frac{|x|^4}{1\vee t}\right)^{1/3}\right)\]
and also
\[\mathbb{P}\left(X_t^\mathcal{G}=x\right)\geq c_3\left(1\wedge|x|^{2-d}\right)\left(1\wedge t^{-1/2}\right)\exp\left(-c_4\left(\frac{|x|^4}{1\vee t}\right)^{1/3}\right).\]
\end{thm}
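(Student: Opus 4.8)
The plan is to reduce everything to a one–dimensional problem. Since $\mathcal{G}$ equipped with its graph metric is isometric to $\mathbb{Z}_+$ via $L_m\mapsto m$, we can write $X^\mathcal{G}_t=L_{Y_t}$, where $Y$ is a continuous‑time simple random walk on $\mathbb{Z}_+$ (reflected at the origin, with unit‑mean exponential holding times), run independently of $L$. Writing $q_t(m):=P(Y_t=m)$ and using that $L$ is self‑avoiding, so that the events $\{L_m=x\}$, $m\geq0$, are disjoint and $d_\mathcal{G}(0,L_m)=m$, this gives the exact identity
\[
\mathbb{P}\left(X_t^\mathcal{G}=x\right)=\sum_{m\geq 0}q_t(m)\,\mathbf{P}(L_m=x).
\]
For $x=0$ only the $m=0$ term survives, so $\mathbb{P}(X_t^\mathcal{G}=0)=q_t(0)\asymp 1\wedge t^{-1/2}$, which is already the claim; so from now on take $x\neq0$, and we may also assume $t\geq1$, $|x|\geq1$ (for fixed $\varepsilon$ only finitely many remaining pairs $(x,t)$ need to be absorbed into the constants). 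Into the displayed formula we feed two inputs: the one‑dimensional bounds $q_t(m)\asymp(1\wedge t^{-1/2})e^{-cm^2/t}$ for $m\lesssim t$ together with (super‑)exponential decay for $m\gtrsim t$ (standard, essentially Lemma~\ref{qtlem}), and Theorem~\ref{thm:main1}. We also use that $\mathbf{P}(L_m=x)=0$ for $m<\|x\|_1$, since the loop‑erasure is a nearest‑neighbour self‑avoiding path from $0$; hence the sum effectively runs over $m\gtrsim|x|$.

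For the upper bound I decompose the sum dyadically, $\sum_{m\geq\|x\|_1}=\sum_{k\geq k_0}\sum_{m=2^k}^{2^{k+1}-1}$ with $2^{k_0}\asymp\|x\|_1$, bound $q_t$ on each block by its maximum, and apply the first estimate of Theorem~\ref{thm:main1} in the form $\sum_{m=2^k}^{2^{k+1}-1}\mathbf{P}(L_m=x)\leq c_1 2^{k(1-d/2)}e^{-c_2|x|^2/2^k}$. This leaves the essentially geometric series $\sum_k(1\wedge t^{-1/2})e^{-c2^{2k}/t}2^{k(1-d/2)}e^{-c_2|x|^2/2^k}$ (plus a harmless tail from blocks with $2^k\gtrsim t$). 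The exponential part $e^{-ca^2/t-c_2|x|^2/a}$ is optimised at $a_*\asymp(t|x|^2)^{1/3}$, where it equals $e^{-c'(|x|^4/t)^{1/3}}$, so the series is controlled by its dominant term, which has the shape (a fixed power of $|x|$ and $t$) $\times\,e^{-c'(|x|^4/t)^{1/3}}$. The key point is that this polynomial factor need not be sharp: it overshoots $(1\wedge t^{-1/2})(1\wedge|x|^{2-d})$ only by a bounded power of $1+|x|^4/(1\vee t)$, so writing $u:=(|x|^4/(1\vee t))^{1/3}$ and using $u^{K}e^{-c'u}\leq C_{c_2}e^{-c_2u}$ for any $c_2<c'$, the excess is absorbed by passing to a slightly smaller exponent $c_2$. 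One treats separately the regimes $t\lesssim|x|$ (where the dominant block sits at $a\asymp|x|$ rather than $a_*$, and $t\geq\varepsilon|x|$ is used to compare $|x|^2/t$ with $(|x|^4/t)^{1/3}$) and $t\gtrsim|x|^4$ (where $a_*\gtrsim|x|^2$, the exponential is of order one, and the sum is dominated by the block at $a\asymp|x|^2$, giving the clean $(1\wedge t^{-1/2})|x|^{2-d}$); the same mechanism applies in each.

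For the lower bound I keep a single well‑chosen block and apply the second estimate of Theorem~\ref{thm:main1}. If $|x|\leq t\leq|x|^4$, take $n\asymp(t|x|^2)^{1/3}$ (so $n\geq|x|$ and $n\lesssim\sqrt t$): then $\sum_{m=\lceil c_3n\rceil}^{\lfloor c_4n\rfloor}\mathbf{P}(L_m=x)\geq c_5n^{-d/2}e^{-c_6|x|^2/n}$, while $q_t(m)\gtrsim t^{-1/2}e^{-cm^2/t}\gtrsim t^{-1/2}e^{-c(|x|^4/t)^{1/3}}$ on that block. If $t\geq|x|^4$, take $n\asymp|x|^2$, where $q_t$ is of order $t^{-1/2}$ and the theorem yields a block sum $\gtrsim|x|^{2-d}$. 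If $\varepsilon|x|\leq t\leq|x|$, take $n\asymp\|x\|_1$, where the theorem gives a block sum $\gtrsim|x|^{1-d/2}e^{-c_6|x|}$ and, for $m\asymp|x|\gtrsim t$, the event of exactly $m$ successive $+1$‑jumps by time $t$ gives $q_t(m)\geq e^{-t}(t/2m)^m/m!\geq e^{-C(\varepsilon)|x|}$. In every case the resulting bound is a constant times $(1\wedge t^{-1/2})(1\wedge|x|^{2-d})e^{-c_4(|x|^4/(1\vee t))^{1/3}}$ once $c_4$ is taken large enough (depending on $\varepsilon$), since here any polynomial discrepancy runs the favourable way or is again killed by exponential slack.

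The main obstacle — and the reason the stated constants $c_2,c_4$ are not sharp — is precisely the gap between the \emph{time‑averaged} Gaussian bound of Theorem~\ref{thm:main1} and the genuinely pointwise statement one would need to pin down the polynomial prefactor: the naive dyadic estimate overshoots $|x|^{2-d}t^{-1/2}$ by a power of $|x|^4/t$ in the range $|x|\lesssim t\lesssim|x|^4$, and the scheme closes only because the sub‑Gaussian exponent leaves room to swallow it. The real work is therefore the bookkeeping across the three regimes of $t$ relative to $|x|$ and $|x|^4$, and in particular matching the diffusive‑versus‑ballistic behaviour of the heat kernel of $Y$ to the time scale $(t|x|^2)^{1/3}$ at which $L$ and $Y$ conspire; no new idea beyond Theorem~\ref{thm:main1} and standard one‑dimensional random walk estimates is needed.
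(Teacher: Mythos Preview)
Your proposal is correct and follows essentially the same route as the paper: both start from the decomposition $\mathbb{P}(X_t^\mathcal{G}=x)=\sum_m q_t(0,m)\mathbf{P}(L_m=x)$, handle $x=0$ trivially, and for $x\neq0$ combine Lemma~\ref{qtlem} with a dyadic regrouping of Theorem~\ref{thm:main1}, optimising the resulting exponent at the scale $m\asymp (t|x|^2)^{1/3}$ for the upper bound and selecting a single block near that scale (or near $|x|$, $|x|^2$ in the boundary regimes) for the lower bound. The only notable difference is bookkeeping: where you absorb the polynomial overshoot $u^{(d/2-1)/3}$ into the exponential and split into three $t$-regimes, the paper instead uses the inequality $u^{-d/2}\leq C|x|^{-d}\exp(\delta|x|^2/u)$ to extract the factor $|x|^{2-d}$ cleanly before estimating the integral, which lets it get away with just the two cases $|x|^4\gtrless 1\vee t$.
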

\bigskip

\begin{rem}
Essentially similar arguments would apply if $X^\mathcal{G}$ was replaced by the discrete-time simple random walk on $\mathcal{G}$. In that case, one would need to assume $t\geq \|x\|_1$ for the lower bound, since the probability being estimated would be zero for $t<\|x\|_1$. In the continuous setting, the behaviour of $\mathbb{P}(X_t^\mathcal{G}=x)$ takes on a different form for $t<\varepsilon |x|$, as the probability will be determined by certain rare events (cf.\ the discussion of Poisson bounds in \cite[Section 5.1]{Barbook}; see also Lemma \ref{qtlem} below).
\end{rem}

To put this result into context, it helps to briefly recall what kind of behaviour has been observed for anomalous random walks and diffusions in other settings. In particular, for many random walks or diffusions on fractal-like sets (either deterministic or random), it has been shown that the associated transition density $(p_t(x,y))$ satisfies, within appropriate ranges of the variables, upper and lower bounds of the form
\begin{equation}\label{sgb}
c_1t^{-d_s/2}\exp\left(-c_2\left(\frac{d(x,y)^{d_w}}{t}\right)^{\frac{1}{d_w-1}}\right),
\end{equation}
where $d(x,y)$ is some metric on the space in question. (See \cite{Barbook, Kumbook} for overviews of work in this area.) The exponent $d_s$ is typically called the spectral dimension, since it is related to growth rate of the spectrum of the generator of the stochastic process. The exponent $d_w$, which is usually called the walk dimension (with respect to the metric $d$), gives the space-time scaling.

Now, in our setting, we can clearly write
\[\mathbb{P}\left(X_t^\mathcal{G}=x\right)=\mathbb{P}\left(X_t^\mathcal{G}=x\:\vline\:x\in\mathcal{G}\right)\mathbf{P}\left(x\in\mathcal{G}\right),\]
and, moreover, using simple facts about the intersection properties of SRW in high dimensions, one can check that $\mathbf{P}(x\in\mathcal{G})\asymp 1\wedge|x|^{2-d}$ (where we use the notation $\asymp$ to mean that the left-hand side is bounded above and below by constant multiples of the right-hand side). Hence, Theorem \ref{mainres} gives that $\mathbb{P}(X_t^\mathcal{G}=x\:\vline\:x\in\mathcal{G})$ satisfies upper and lower bounds of exactly the sub-Gaussian form at \eqref{sgb}, with $d_s=1$, $d_w=4$ and $d$ being the Euclidean metric. We can understand that $d_s=1$ results from one-dimensional nature of the graph $\mathcal{G}$ with respect to its intrinsic metric $d_\mathcal{G}$. Moreover, the exponent $d_w=4$ gives the space-time scaling of the process $X^\mathcal{G}$ with respect to the Euclidean metric. Indeed, it is not difficult to combine \eqref{bms} and the Brownian motion scaling limit of $L$ (from \cite{Lawduke}) to check that, up to a deterministic constant factor in the time scaling,
\[\left(n^{-1/4}X_{nt}^\mathcal{G}\right)_{t\geq 0}\rightarrow \left(W_{|B_{t}|}\right)_{t\geq 0},\]
where $(W_t)_{t\geq 0}$ is a $d$-dimensional Brownian motion, independent of the one-dimensional Brownian motion $B$, with $W$ and $B$ each started from the origin. (The limit process is a version of the $d$-dimensional iterated Brownian motion studied in \cite{DeB}, for example. Cf.\ the result in \cite{rwrrw} for random walk on the range of a random walk.) We note that the exponent $d_s=1$ matches the quenched spectral dimension, whereas the $d_w=4$ is the multiple of the `2' of the quenched bound, which is the walk dimension of $X^\mathcal{G}$ with respect to the intrinsic metric $d_\mathcal{G}$, and the `2' that gives the space-time scaling of $L$. We highlight that the annealed bound is not obtained by simply replacing $d_\mathcal{G}(0,x)$ by $|x|^2$ in the quenched bound, though, as doing that does not result in an expression of the form at \eqref{sgb}.

As remarked at the start of the introduction, it was conjectured in \cite{BCK2} that a similar combination of the various exponents will appear in sub-Gaussian annealed heat kernel bounds for the random walk on the two-dimensional uniform spanning tree. In that case, the spectral dimension of the quenched and annealed bounds is known to be $16/13$, the intrinsic walk dimension is $13/5$ and the exponent governing the embedding is  given by the growth exponent of the two-dimensional LERW, i.e.\ $5/4$, giving an extrinsic walk dimension of $13/4$. We are able to check the corresponding result for our simpler model using the simple observation that
\begin{equation}\label{decomp}
\mathbb{P}\left(X_t^\mathcal{G}=x\right)=\sum_{m\geq 0}P^\mathcal{G}(X^\mathcal{G}=L_m)\mathbf{P}(L_m=x),
\end{equation}
and then combining the estimate on the distribution of the LERW from Theorem \ref{thm:main1} with the deterministic Gaussian bounds on $P^\mathcal{G}(X^\mathcal{G}=L_m)$ of Lemma \ref{qtlem} below. It is clear that a similar argument would apply for other random walks on random paths, including ones with different exponents, whenever one can suitably estimate the term corresponding to $\mathbf{P}(L_m=x)$. LERW in dimensions $2$, $3$ and $4$ would represent worthwhile examples to consider here. For more general random graphs, one could give a decomposition that is similar to \eqref{decomp}, but to proceed from that point one would need both good quenched heat kernel estimates with respect to some distance in place of the Gaussian bounds on $P^\mathcal{G}(X^\mathcal{G}=L_m)$ and good estimates on how the distance that appears in the quenched bounds behaves. Pursuing this program would be of interest for various random graphs that do not exhibit homogenisation. Apart from the uniform spanning tree/forest (in two and other dimensions, see \cite{ACHS,BCK,BCK2} for some relevant background on the associated random walk), one might consider the situation for random walk on: the trace of a simple random walk on $\mathbb{Z}^d$, the scaling limit of which was derived for $d\geq 4$ in \cite{rwrrw,rwrrw4}; the high-dimensional branching random walk, for which a scaling limit was proved in \cite{BCF2, BCF1}; and on high-dimensional critical percolation clusters, for which a number of related random walk estimates have previously been established \cite{HHH,KN}, for example.

The remainder of the article is organised as follows. In Section \ref{sec2}, we define the LERW $(L_n)_{n\geq 0}$ properly, and present some preliminary estimates that will be useful for understanding this process. After this,  in Section \ref{sec3}, we study the LERW in more detail, proving Theorem \ref{thm:main1}. Finally, in Section \ref{sec4}, we derive our heat kernel estimates for $X^\mathcal{G}$, proving Theorem \ref{mainres}.

\section{Preliminaries}\label{sec2}

In this section, we will start by presenting some notation, and then precisely define the loop-erased random walk that is the main object of interest in this article. We also give some estimates on hitting probabilities for annuli by a simple random walk that will be useful in Section \ref{sec3}.

\subsection{Subsets}

Throughout this paper, we always assume that $d \ge 5$. If $A \subseteq \mathbb{Z}^{d}$, we set
\begin{align*}
\partial A      &= \left\{ x \in \mathbb{Z}^{d} \setminus A  \::\: \exists y \in A \text{ such that } |x- y| = 1 \right\}, \\
\partial_{i} A  &= \left\{ x \in  A  \::\:  \exists y \in \mathbb{Z}^{d} \setminus A \text{ such that } |x- y| = 1 \right\}
\end{align*}
for the outer and inner boundary of $A$, respectively. We write $\overline{A} = A \cup \partial A$ for the discrete closure of $A$. We often use $A^{c}$ to denote $\mathbb{Z}^{d} \setminus A$.

For $x \in \mathbb{R}^{d}$ and $r > 0$, we let
\begin{align*}
 B (x, r) = \left\{ y \in \mathbb{Z}^{d} \::\: |x - y| \le r \right\},\qquad  B_{\infty} (x, r) = \left\{ y \in \mathbb{Z}^{d} \::\: \| x - y \|_{\infty} \le r \right\}
\end{align*}
be the discrete ball of radius $r$ centered at $x$ with respect to the Euclidean distance $| \cdot |$ and $\ell_{\infty}$-norm $\| \cdot \|_{\infty}$ in $\mathbb{R}^{d}$, respectively. We often call $B_{\infty} (x, r)$ the box of side length $2r$ centered at $x$.

If $A$ and $B$ are two subsets of $\mathbb{Z}^{d}$, we define the distance between them by setting
\[\text{dist} (A, B) = \inf_{x \in A, y \in B} |x-y|.\]

\subsection{Paths and loop-erasure}

Let $m \ge 0$ be an integer. If $\lambda = [\lambda_{0}, \lambda_{1}, \dots , \lambda_{m} ] \subset \mathbb{Z}^{d}$ is a sequence of points in $\mathbb{Z}^{d}$ satisfying $|\lambda_{i} - \lambda_{i-1} | = 1$ for $1 \le i \le m$, we call $\lambda$ a path of length $m$. The length of $\lambda$ is denoted by $\text{len} (\lambda)$. We write $\lambda [i, j] = [\lambda_{i}, \lambda_{i+1}, \dots , \lambda_{j}]$ for $0\leq i \le j\leq \text{len}(\lambda)$. If $\lambda$ satisfies $\lambda_{i} \neq \lambda_{j}$ for all $i \neq j$, then $\lambda$ is called a simple path. If $\lambda = [\lambda_{0}, \lambda_{1}, \dots ] \subset \mathbb{Z}^{d}$ is an infinite sequence of points in $\mathbb{Z}^{d}$ such that $[\lambda_{0}, \lambda_{1}, \dots , \lambda_{m}]$ is a path for each $m \ge 0$, we call $\lambda$ an infinite path. We write $\lambda [i, \infty) = [\lambda_{i}, \lambda_{i+1}, \dots  ]$ for $i \ge 0$.

If $\lambda$ is a path in $\mathbb{Z}^{d}$ (either finite or infinite) and $A \subseteq \mathbb{Z}^{d}$, we define
\begin{equation}\label{hitting}
\tau^{\lambda}_{A} = \inf\left\{ j \ge 0 \::\: \lambda_{j} \in A \right\}
\end{equation}
to be the first time that $\lambda$ hits $A$, with the convention that $\inf \emptyset = + \infty$. If $A=\{x\}$, then we set $\tau^\lambda_x=\tau^\lambda_{\{x\}}$.

Given a path $\lambda = [\lambda_{0}, \lambda_{1}, \dots , \lambda_{m} ] \subset \mathbb{Z}^{d}$ with $\text{len} (\lambda ) =m$, we define its (chronological) loop-erasure  $\text{LE} (\lambda )$ as follows. Let $\sigma_{0} = \max \{ k \::\: \lambda_{k} = \lambda_{0}\}$  and also, for $i\geq 1$,
\begin{equation}\label{sigmadef}
\sigma_{i} = \max \left\{ k \::\: \lambda_{k} = \lambda_{\sigma_{i-1} +  1 } \right\}.
\end{equation}
We note that these quantities are well-defined up to the index $j = \min \{ i \::\: \lambda_{\sigma_{i} } = \lambda_{m} \}$, and we use them to define the loop-erasure of $\lambda$ by setting
\[\text{LE} (\lambda) = \left[\lambda_{\sigma_{0}}, \lambda_{\sigma_{1} }, \dots , \lambda_{\sigma_{j} } \right].\]
It follows by construction that $\text{LE} (\lambda)$ is a simple path satisfying $\text{LE} (\lambda) \subseteq \lambda$, $\text{LE} (\lambda)_{0} = \lambda_{0}$ and $\text{LE} (\lambda)_{j} = \lambda_{m} $. If $\lambda = [\lambda_{0}, \lambda_{1}, \dots ] \subseteq \mathbb{Z}^{d}$ is an infinite path such that $\{ k\::\:\lambda_{k} = \lambda_{i} \}$ is finite for each $i \ge 0$, then its loop-erasure $\text{LE} (\lambda)$ can be defined similarly.

\subsection{Random walk and loop-erased random walk}

As in the introduction, let $S=(S_n)_{n\geq0}$ be the discrete-time simple random walk (SRW) on $\mathbb{Z}^{d}$. For $x \in \mathbb{Z}^{d}$, the law of $S$ started from $S_{0} = x$ will be denoted by $\Pb^{x}$. The expectation with respect to $\Pb^{x}$ will be denoted by $\mathbf{E}^{x}$. We will also define $\Pb = \Pb^{0}$ and $\mathbf{E} = \mathbf{E}^{0}$.

When $d \ge 5$, it is well known that $S$ is transient. Hence for every $x \in \mathbb{Z}^{d}$ and $\Pb^{x}$-almost-every realisation of $S$, it is possible to define the loop-erasure of $S[0, \infty )$, and we will denote this infinite path by
\[L = \text{LE} \left( S [0, \infty ) \right).\]
Note that $L$ is a random simple path with $L_{0} = x$ that satisfies that $\lim_{m \to \infty} |L_{m}| = \infty$, $\Pb^{x}$-almost surely. If $L$ is constructed from the SRW $S$ under $\Pb^{x}$, we will refer to it as the infinite loop-erased random walk (LERW) started at $x$. See \cite[Section 11]{LawLim} for further background.

\subsection{Constants}

We use $c$, $c'$, $C$, $C'$, $c_{1}$, $C_{1}, \dots$ to denote universal positive finite constants depending only on $d$, whose values may change between lines. If we want to emphasize that a constant depends on some parameter, we will use a subscript to indicate it. For example, $c=c_{\varepsilon}$ means the constant $c$ depends on $\varepsilon$. If $\{ a_n \}$ and $\{ b_n \}$ are two positive sequences, then we write $a_n = O(b_n)$ if there exists a constant $C \in (0, \infty)$ such that $a_{n} \le C b_{n}$ for all $n$. When we add a subscript to this $O$ notation, it means that the constant $C$ depends on the subscript. For instance, by $a_n = O_{\varepsilon} (b_n)$, we mean that $a_n \le C_{\varepsilon} b_n$.

\subsection{Simple random walk estimates}

Let $S$ be a simple random walk on $\mathbb{Z}^{d}$, and suppose $m$ and $n$ are real numbers such that $1 \le m < n $. Moreover, let $A = \{ x \in \mathbb{Z}^{d} \::\: m \le |x| \le n \}$, and set $\tau = \tau^{S}_{A^{c}}$ to be the first time that $S$ exits $A$. Then \cite[Proposition 1.5.10]{Lawb} gives that, for all $x \in A$,
\begin{equation}\label{srwbound}
\Pb^{x} \left( |S_{\tau} | \le m \right) = \frac{ |x|^{2-d} - n^{2-d}  + O (m^{1-d} )  }{m^{2-d} - n^{2-d}}.
\end{equation}
Whilst this approximation is good for large $m$, in this paper, we also need to consider the situation when $m=1$ and $|x|$ is large. In this case, $|S_{\tau} | \le m$ if and only if $S_{\tau} = 0$, and the estimate \eqref{srwbound} is not useful due to the $O (m^{1-d} )$ term. However, adapting the argument used to prove \cite[Proposition 1.5.10]{Lawb}, it is possible to establish that there exists a universal constant $a = a_{d} > 0$ such that
\begin{equation}\label{srwbound2}
\Pb^{x} \left( S_{\tau} = 0 \right) = \frac{ a |x|^{2-d} - a n^{2-d}  + O (|x|^{1-d} )  }{G (0) - a n^{2-d}},
\end{equation}
where $G (0)$, as defined by
\[G (0) = \sum_{j = 0}^{\infty} \Pb ( S_{j} = 0 ),\]
is the expected number of returns of $S$ to its starting point, which is finite in the dimensions we are considering.

In this article, we will also make use of another basic estimate for simple random walk on $\mathbb{Z}^d$, which is often called the gambler's ruin estimate. We take $\theta\in \mathbb{R}^d$ with $|\theta|=1$ and set $\widehat{S}_j=S_j\cdot\theta$. Let $\eta_n=\min\{j\ge 0\mathrel{:}\widehat{S}_j\le 0 \mbox{ or } \widehat{S}_j\ge n\}$. We denote by $\widehat{\mathbf{P}}^x$ the law of $\widehat{S}$ with starting point $x\in\mathbb{R}$. Then \cite[Proposition 5.1.6]{LawLim} guarantees that there exist $0<\alpha_1<\alpha_2<\infty$ such that: for all $1\le m\le n$,
\begin{equation}\label{gambler}
\alpha_1\frac{m+1}{n}\le \widehat{\mathbf{P}}^m(\widehat{S}_{\eta_n}\ge n)\le \alpha_2\frac{m+1}{n}.
\end{equation}
The gambler's ruin estimate gives upper and lower bounds on the probability that a simple random walk on $\mathbb{Z}^d$ projected onto a line escapes from one of the endpoints of a line segment.

\section{Loop-erased random walk estimates}\label{sec3}

The aim of this section is to prove Theorem \ref{thm:main1}. Due to the diffusive scaling of the LERW, it is convenient to reparameterise the result. In particular, we will prove the following, which clearly implies Theorem \ref{thm:main1}. Throughout this section, for $x\in\mathbb{Z}^d$, we write $\tau_{x} = \tau_{x}^{L}$ for the first time that the LERW $L$ hits $x$.

\begin{propn}\label{Lprop}
There exist constants $c_1,c_2\in(0,\infty)$ such that for every $x\in\mathbb{Z}^d\backslash\{0\}$ and $M>0$,
\[\mathbf{P}\left(\tau_{x}\in\left[M|x|^2,2M|x|^2-1\right]\right)\leq c_1\left(M|x|^2\right)^{1-d/2}\exp\left(-\frac{c_2}M\right).\]
Moreover, there exist constants $c_3,c_4,c_5,c_6\in(0,\infty)$ such that for every $x\in\mathbb{Z}^d\backslash\{0\}$ and $M\geq |x|^{-1}$,
\[\mathbf{P}\left(\tau_{x}\in\left[c_3 M|x|^2,c_4 M|x|^2\right]\right)\geq c_5\left(M|x|^2\right)^{1-d/2}\exp\left(-\frac{c_6}M\right).\]
\end{propn}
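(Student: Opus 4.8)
\emph{Strategy.} The plan is to pass from the number of steps $\tau_x=\mathrm{len}(L[0,\tau_x])$ that the loop-erased path takes to reach $x$ to quantities for the underlying simple random walk $S$, and then to feed in the hitting estimates \eqref{srwbound}, \eqref{srwbound2} and the gambler's ruin bound \eqref{gambler}. Write $\xi_x$ for the first time and $\ell_x=\sup\{j\ge0\mathrel{:}S_j=x\}$ for the last time that $S$ visits $x$ (the latter finite a.s.\ by transience). The entry point is the last-exit description of $L$: on $\{x\in L\}$ one has $L[0,\tau_x]=\mathrm{LE}(S[0,\ell_x])$, and, via the identity $\mathrm{LE}(\alpha\oplus\beta)=\mathrm{LE}(\mathrm{LE}(\alpha)\oplus\beta)$, the event $\{x\in L,\,L[0,\tau_x]=\gamma\}$ can be rephrased in terms of $\beta_x:=\mathrm{LE}(S[0,\xi_x])$, the loop-erased walk run from $0$ to $x$, of length $\Lambda_x:=\mathrm{len}(\beta_x)$, together with an independent event that $S$ escapes from $x$ without returning to $\beta_x$. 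In $d\ge5$ these escape probabilities are bounded below for typical $\beta_x$, and $\mathbf{P}(x\in L)\asymp\mathbf{P}(\xi_x<\infty)\asymp|x|^{2-d}$; so, up to multiplicative constants, the proposition is equivalent to the two-sided pointwise estimate $\mathbf{P}(\Lambda_x\in[c_3n,c_4n]\mid\xi_x<\infty)\asymp n^{1-d/2}|x|^{d-2}e^{-\Theta(|x|^2/n)}$, with $n=M|x|^2$, for the length of the LERW from $0$ to $x$. Keeping the conditioning is what will supply the factor $|x|^{2-d}$ in the proposition.

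\emph{Upper bound.} I would split according to the size of $M$. If $M$ is large, $\Lambda_x\ge n$ forces $\beta_x$ to leave $B(0,\varepsilon\sqrt n)$; using the (known) fact that high-dimensional LERW exits $B(0,r)$ in order $r^2$ of its own steps, with exponential upper tails, one reduces to conditioning on this first exit, and then the standard estimate $\mathbf{P}_y(S\text{ hits }x)\asymp|y-x|^{2-d}$ together with \eqref{srwbound} (and \eqref{srwbound2} when $|x|$ is of order $1$, to avoid the unhelpful $O(m^{1-d})$ term) shows that $S$ subsequently reaches $x$, which lies at distance $\asymp\sqrt n$ from the exit point, with probability $\asymp n^{1-d/2}$; dividing by $\mathbf{P}(\xi_x<\infty)\asymp|x|^{2-d}$ and noting $e^{-c/M}\asymp1$ in this regime gives the claim. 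If $M$ is small, then $\Lambda_x\le n\ll|x|^2$ forces $\beta_x$ (equivalently $L$, by the reduction) to reach distance $|x|$ in at most $n=M|x|^2$ steps, an anomalously fast exit from $B(0,|x|)$; here the Gaussian lower-tail bound $\mathbf{P}(\Lambda_x\le n\mid\xi_x<\infty)\le Ce^{-c/M}$ suffices (for small $M$ the prefactor $(M|x|^2)^{1-d/2}$ is at least $|x|^{2-d}$, so it is harmless once the exponential is present), and this is obtained by comparing the radial coordinate of $S$ with a one-dimensional walk and applying \eqref{gambler}, chained over a geometric sequence of annuli around $x$ each of which would have to be crossed too directly.

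\emph{Lower bound.} Here I would construct explicit favourable configurations for $S$. For $M\gtrsim1$, set $R=\sqrt n=\sqrt M|x|\ge|x|$: with probability $\gtrsim1$, $S$ exits $B(0,|x|)$ before hitting $x$, then runs $\asymp R^2=n$ further steps to reach a typical point near $\partial B(0,R)$, so that its loop-erasure so far is a non-degenerate LERW of length in $[c_3n,c_4n]$; next, by $\mathbf{P}_y(S\text{ hits }x)\asymp|y-x|^{2-d}$ (using \eqref{srwbound} and \eqref{srwbound2}), with probability $\asymp R^{2-d}=n^{1-d/2}$ the walk returns from distance $R$ to hit $x$ without meeting the thin outward trace; finally it escapes to infinity with probability $\gtrsim1$. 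On this event $x\in L$ with $\Lambda_x$, hence $\tau_x$, in the prescribed window, the return trip changing the loop-erased length only by $O(n)$; normalising by $|x|^{2-d}$ gives the bound. For $M\lesssim1$ (so $n\in[|x|,|x|^2]$, the hypothesis $M\ge|x|^{-1}$ ensuring $n\ge|x|$), I would instead force $S$ to make near-direct progress to $x$ in $\asymp n$ steps while confined to a tube of width $\asymp\sqrt n$, so that its loop-erasure automatically has length $\asymp n$, and show that the probability of such behaviour is $\gtrsim n^{1-d/2}e^{-c|x|^2/n}$, which I would extract from a second-moment argument for length-$\asymp n$ SRW bridges from $0$ to $x$ passing through a designated tube, again invoking \eqref{gambler} to keep enough of the loop-erasure from being destroyed so that $\Lambda_x$ stays of order $n$.

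\emph{Main obstacle.} I expect the decisive difficulty to be the lower bound in the near-ballistic regime $M\lesssim1$: one must produce enough nearly-straight realisations of the LERW of the prescribed length and show their total probability is as large as $n^{1-d/2}e^{-c|x|^2/n}$, the point being that a random SRW bridge is not a simple path, so one has to control how much of its loop-erasure survives and prevent $\Lambda_x$ from collapsing well below order $n$. A recurring secondary difficulty is the bookkeeping in passing between $\tau_x$, $\mathrm{len}(\mathrm{LE}(S[0,\ell_x]))$ and $\Lambda_x=\mathrm{len}(\mathrm{LE}(S[0,\xi_x]))$: portions of $S$ after time $\xi_x$ can erase part of the loop-erasure already built, so the escape and avoidance events must be handled carefully enough to contribute only constant factors.
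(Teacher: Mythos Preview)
Your four-way split and your identification of the near-ballistic lower bound as the crux both match the paper. But two of your proposed mechanisms do not work as stated, and one case is far easier than you make it.

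For the upper bound with $M\geq 1$, no exit-time or $\Lambda_x$ reduction is needed: since $\sigma_i\geq i$, one has directly
\[
\mathbf{P}\bigl(\tau_x\in[M|x|^2,2M|x|^2)\bigr)\leq\mathbf{E}\,\#\{n\geq M|x|^2:S_n=x\}=\sum_{n\geq M|x|^2}\mathbf{P}(S_n=x)\leq C(M|x|^2)^{1-d/2}
\]
from the SRW heat kernel alone. For the upper bound with $M<1$, your proposed source of the exponential---gambler's ruin for the radial coordinate of $S$ across annuli---controls \emph{SRW} crossing times, not \emph{LERW} step counts; loops of $S$ inside an annulus are erased, so a long SRW crossing says nothing about whether the LERW traverses that annulus in few of its own steps. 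The paper instead imports a LERW-specific conditional estimate from \cite{BJ}, namely $\mathbf{P}(t^i-t^{i-1}\geq c(M|x|)^2\mid L[0,t^{i-1}])\geq c$ for the LERW box-exit times $t^i$, and then feeds these into a large-deviation lemma (\cite[Lemma~1.1]{BB}) for sums of conditionally-bounded-below increments. The $|x|^{2-d}$ factor is obtained separately, by bounding $\mathbf{P}(\tau_x<\infty\mid t_x\leq M|x|^2)$ via the domain Markov property of $L$. For both lower bounds the essential ingredient missing from your sketch is \emph{cut times}: the assertion that inside a tube ``the loop-erasure automatically has length $\asymp n$'' is exactly the step that does not come for free, and a second-moment argument on SRW bridges does not supply it. The paper establishes it by a second-moment count of \emph{local cut points} of $S$ inside a sub-box of each tube segment---cut points survive loop-erasure by definition---together with careful geometric control (via \eqref{gambler} and ``nice cut time'' events) preventing later portions of $S$ from re-entering and destroying them. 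For $M\geq 1$, the analogous difficulty is hidden in your phrase ``without meeting the thin outward trace'': the paper routes the outward and return legs through geometrically disjoint regions $D_+,D_-$, decouples the return leg via a path-reversal identity into an independent walk started from $x$, and again counts cut times on the outward leg inside a designated box to pin the LERW length from below. Your last-exit/$\Lambda_x$ framing is not how the paper proceeds (it works with $\tau_x$ directly), and introduces exactly the bookkeeping you flag without buying any simplification.
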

\bigskip

We will break the proof of this result into four pieces, distinguishing the cases $M\in(0,1)$ and $M\geq 1$, and considering the upper and lower bounds separately. See Propositions \ref{upper-M-large}, \ref{upper-M-small}, \ref{sw-prop} and \ref{lower-M-large} for the individual statements.

\subsection{Upper bound for $M\geq 1$}

The aim of this section is to establish the following, which is the easiest to prove of the constituent results making up Proposition \ref{Lprop}.

\begin{prop}\label{upper-M-large}
There exist constants $c_1,c_2\in(0,\infty)$ such that for every $x\in\mathbb{Z}^d\backslash\{0\}$ and $M\geq 1$,
\[\mathbf{P}\left(\tau_{x}\in\left[M|x|^2,2M|x|^2-1\right]\right)\leq c_1\left(M|x|^2\right)^{1-d/2}\exp\left(-\frac{c_2}M\right).\]
\end{prop}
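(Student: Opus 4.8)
\emph{Proof strategy.} The plan is to reduce the statement to a routine simple-random-walk computation via one elementary structural fact about loop-erasure; in the regime $M\geq 1$ the claimed exponential factor is then essentially for free. Since $L$ is a simple path, the event $\{\tau_x\in[M|x|^2,2M|x|^2-1]\}$ is contained in $\{x\in L,\ \tau_x\geq M|x|^2\}$, and on $\{x\in L\}$ I would observe that $\tau_x\leq\ell_x$, where $\ell_x:=\sup\{k\geq 0\mathrel{:}S_k=x\}$ is the last visit of $S$ to $x$ (finite $\mathbf{P}$-a.s.\ on this event, by transience). This is immediate from the definition of chronological loop-erasure: with $\sigma_i$ as in \eqref{sigmadef} we have $S_{\sigma_{\tau_x}}=L_{\tau_x}=x$, and since $x\neq 0$ forces $\tau_x\geq 1$, the recursion \eqref{sigmadef} gives $S_{\sigma_{\tau_x-1}+1}=x$, hence $\sigma_{\tau_x}=\max\{k\mathrel{:}S_k=x\}=\ell_x$; thus $L[0,\tau_x]=\text{LE}(S[0,\ell_x])$, and since loop-erasure cannot increase length, $\tau_x=\text{len}(\text{LE}(S[0,\ell_x]))\leq\ell_x$. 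Consequently,
\[\mathbf{P}\left(\tau_x\in[M|x|^2,2M|x|^2-1]\right)\ \leq\ \mathbf{P}\left(\ell_x\geq M|x|^2\right)\ =\ \mathbf{P}\left(S_k=x\text{ for some }k\geq M|x|^2\right).\]

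Next, writing $n:=M|x|^2\geq 1$ and conditioning on $S_n$, the Markov property together with the identity $\mathbf{P}^y(\tau^S_x<\infty)=G(0,x-y)/G(0)$ (with $G(0,z):=\sum_{j\geq 0}\mathbf{P}(S_j=z)$) and the Green's function bound $G(0,z)\leq c(1\wedge|z|^{2-d})$ give
\[\mathbf{P}\left(\ell_x\geq n\right)\ \leq\ \sum_{y\in\mathbb{Z}^d}\mathbf{P}(S_n=y)\,\mathbf{P}^y\left(\tau^S_x<\infty\right)\ \leq\ c\sum_{y\in\mathbb{Z}^d}\mathbf{P}(S_n=y)\left(1\wedge|x-y|^{2-d}\right).\]
I would estimate this sum by splitting at $|x-y|=\sqrt n$: on $\{|x-y|\leq\sqrt n\}$ one uses $\mathbf{P}(S_n=y)\leq cn^{-d/2}$ from \eqref{srwbound0} together with $\sum_{|z|\leq\sqrt n}(1\wedge|z|^{2-d})\leq cn$, while on $\{|x-y|>\sqrt n\}$ one uses $1\wedge|x-y|^{2-d}\leq n^{1-d/2}$; both contributions are $O(n^{1-d/2})$, so that $\mathbf{P}(\ell_x\geq n)\leq cn^{1-d/2}$. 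Combining the two displays yields $\mathbf{P}(\tau_x\in[M|x|^2,2M|x|^2-1])\leq c(M|x|^2)^{1-d/2}$ for all $x\neq 0$ and $M\geq 1$; since $e^{-1/M}\geq e^{-1}$ when $M\geq 1$, this gives the assertion with, say, $c_2=1$ and $c_1=ce$.

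I do not anticipate any real obstacle here. The only genuine idea is the inequality $\tau_x\leq\ell_x$ on $\{x\in L\}$, and once that is in hand the statement is a standard local-central-limit-plus-Green's-function computation for the simple random walk. The exponential factor is inactive in this regime — for $M\geq 1$ it is bounded below by a constant — which is precisely why this is the easiest of the four constituent bounds of Proposition \ref{Lprop}; the genuinely delicate Gaussian decay only enters for $M\in(0,1)$.
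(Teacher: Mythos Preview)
Your proof is correct and uses essentially the same idea as the paper's: the key inequality $\tau_x\leq\sigma_{\tau_x}=\ell_x$ (equivalently, $\sigma_i\geq i$) reduces the question to a tail bound for the simple random walk, and for $M\geq 1$ the exponential factor is absorbed into the constant. The only difference is in how you estimate the SRW tail: the paper simply writes
\[
\mathbf{P}(\ell_x\geq n)\leq\mathbf{E}\bigl[\#\{k\geq n:S_k=x\}\bigr]=\sum_{k\geq n}\mathbf{P}(S_k=x)\leq C\sum_{k\geq n}k^{-d/2}\leq Cn^{1-d/2},
\]
which avoids the Markov-at-time-$n$ plus Green's-function splitting you carry out; your argument is correct but a bit more work than needed.
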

\begin{proof}
Recalling the definition of $(\sigma_i)_{i\geq 0}$ from \eqref{sigmadef}, we have that
\begin{eqnarray*}
\mathbf{P}\left(\tau_{x}\in\left[M|x|^2,2M|x|^2-1\right]\right)&=&\sum_{i=\lceil M|x|^2\rceil}^{\lfloor 2M|x|^2-1\rfloor}\mathbf{P}\left(S_{\sigma_i}={x}\right)\\
&\leq&\mathbf{E}\left(\#\left\{i\geq \lceil M|x|^2\rceil:\:S_{\sigma_i}={x}\right\}\right).
\end{eqnarray*}
Using that $\sigma_i\geq i$, this implies that
\begin{eqnarray*}
\mathbf{P}\left(\tau_{x}\in\left[M|x|^2,2M|x|^2-1\right]\right)&\leq &\mathbf{E}\left(\#\left\{n\geq \lceil M|x|^2\rceil:\:S_{n}={x}\right\}\right)\\
&=&\sum_{n=\lceil M|x|^2\rceil}^\infty \mathbf{P}(S_n=x)\\
&\leq &\sum_{n=\lceil M|x|^2\rceil}Cn^{-d/2}\\
&\leq & C\left(M|x|^2\right)^{1-d/2},
\end{eqnarray*}
where for the second inequality, we have applied the upper bound on the transition probabilities of $S$ from \cite[Theorem 6.28]{Barbook}. Since it also holds that $\exp(-c_2/M)\geq C$ uniformly over $M\geq 1$, the result follows.
\end{proof}

\subsection{Upper bound for $M \in (0,1)$}

We will give an upper bound on the probability that $\tau_{x}$ is much smaller than $|x|^{2}$. More precisely, the goal of this section is to prove the following proposition. Replacing $M$ by $2M$, this readily implies the relevant part of Proposition \ref{Lprop}.

\begin{prop}\label{upper-M-small}
There exist constants $c_1,c_2\in(0,\infty)$ such that for every $x\in\mathbb{Z}^d\backslash\{0\}$ and $M\in(0,2)$,
\begin{equation}\label{upper-small}
\Pb \left( \tau_{x} \le M |x|^{2} \right) \le c_1\left(M|x|^2\right)^{1-d/2}\exp\left(-\frac{c_2}M\right).
\end{equation}
\end{prop}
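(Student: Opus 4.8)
The plan is to estimate $\mathbf{P}(\tau_x \le M|x|^2)$ by controlling the number of SRW steps needed for the loop-erasure to reach $x$, and then using that a short loop-erased segment forces the SRW itself to travel from $0$ to (a neighbourhood of) $x$ quickly. First I would recall that $\tau_x \le M|x|^2$ means $x = L_{\tau_x}$ appears in the loop-erasure at index at most $M|x|^2$, which in turn implies $x = S_{\sigma_i}$ for some $i \le M|x|^2$, hence $x$ is visited by $S$ at some time that is \emph{at least} $\sigma_i \ge i$ but could be much larger. So the naive bound only gives $x \in S[0,\infty)$, which is not enough. The key extra input is that $\sigma_i$ is the \emph{last} visit of $S$ to $L_i$, so if $\tau_x = i \le M|x|^2$ then $S$ visits $x$ for the last time at a moment after which at most $M|x|^2 - i$ further distinct loop-erased points are produced; more usefully, one should exploit that $L[0,\tau_x]$ is a simple path from $0$ to $x$ of length $\le M|x|^2$, and a simple path of that length from $0$ to $x$, traversed by a walk, forces a lot of backtracking when $M$ is small.

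The cleanest route, which I would try first, is a \emph{diadic/annulus} argument. Fix $x$ with $|x| = R$ and suppose $M < 1$, so the path $L[0,\tau_x]$ has length $\le MR^2 \ll R^2$. Consider concentric annuli around $0$ at scales $R/2^k$; a simple path of length $m$ from $0$ to $x$ must cross the annulus between radius $R/2$ and $R$, say, and more generally between $R2^{-j-1}$ and $R2^{-j}$. I would use the gambler's ruin / crossing estimate \eqref{gambler} together with the relationship between LERW and SRW — namely that a crossing of an annulus by $L$ corresponds to a crossing by $S$ of an annulus of comparable scale — to bound the probability that $S$ makes all these crossings within a total of $MR^2$ steps. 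A single crossing of an annulus of scale $\rho$ takes of order $\rho^2$ steps; to do it in time $t \ll \rho^2$ costs $\exp(-c\rho^2/t)$ by a standard SRW deviation estimate (e.g. from \cite[Section 5.1]{Barbook} or the reflection principle). Summing the scale-$\rho$ contributions with $t$ being the budget allocated to that scale, and optimising the allocation, should produce the factor $\exp(-c_2/M)$. The polynomial prefactor $(MR^2)^{1-d/2}$ should come from the requirement that $x$ actually lie on the LERW, which contributes $|x|^{2-d}$, together with a factor from the local time / number of steps near $x$, analogously to the computation in the proof of Proposition \ref{upper-M-large}.

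An alternative, possibly slicker, approach is to go directly through the one-sided exit estimate \eqref{srwbound2}: write $\mathbf{P}(\tau_x \le M|x|^2) = \sum_{i \le M|x|^2} \mathbf{P}(S_{\sigma_i} = x)$ and decompose according to whether $S$ reaches the sphere of radius $|x|$ before or after hitting a small ball around $0$; the point is that for $x = L_i$ with $i$ small, $S$ must reach distance $|x|$ from the origin without accumulating too many loop-erased points, which is a rare event quantified by combining transience estimates with a deviation bound on the range of $S$ up to a given time. I suspect, however, that making the bookkeeping between ``number of loop-erased points'' and ``number of SRW steps'' precise is exactly where the difficulty lies — the map $S \mapsto L$ is highly non-local, so controlling $\mathrm{len}(L[0,\tau_x])$ in terms of hitting times of $S$ requires care. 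Concretely, the main obstacle will be to convert the event $\{\mathrm{len}(L[0,\tau_x]) \le MR^2\}$ into an event about SRW crossing times to which \eqref{gambler} and Gaussian deviation bounds can be applied, without losing the exponential factor. I would handle this by noting that $L[0,\tau_x] \subseteq S[0,\xi]$ where $\xi$ is the last visit of $S$ to $x$ before it escapes to infinity avoiding the loop back, and then showing $L$ crossing an annulus implies $S$ crosses a comparable annulus \emph{before time $\xi$} with the crossing counted among the first $MR^2$ loop-erased steps; iterating over dyadic scales and using independence of the SRW increments across disjoint annular crossings (via the strong Markov property) then yields the product of exponentials whose logarithm is $-c/M$ after optimisation.
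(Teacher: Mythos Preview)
Your overall factorization --- exponential from ``reaching distance $|x|$ too quickly'' times polynomial $|x|^{2-d}$ from ``then actually hitting $x$'' --- matches the paper's decomposition $\mathbf{P}(\tau_x \le M|x|^2) \le p_{x,M}\cdot q_{x,M}$ at \eqref{ds1}, and your intuition for the polynomial factor is essentially Lemma~\ref{ds-lem-1}. But the mechanism you propose for the exponential factor has a real gap. The event $\{\tau_x \le M|x|^2\}$ constrains the number of \emph{LERW} steps, not the number of \emph{SRW} steps: loops are erased, so $S$ may take arbitrarily many steps while $L$ advances only $M|x|^2$ indices. Hence there is no SRW time budget to which the Gaussian deviation bound $\exp(-c\rho^2/t)$ for annulus crossings can be applied, and your proposed fix via $L[0,\tau_x]\subseteq S[0,\xi]$ does not help, since $\xi$ is uncontrolled. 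You correctly flag this conversion as ``the main obstacle'', but the resolution you sketch does not close it.

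What is actually needed is a direct lower bound on the number of \emph{LERW} steps spent crossing a box, and this is exactly \cite[Corollary~3.10]{BJ}: conditionally on the past of $L$, the LERW spends at least $c\rho^2$ steps crossing a box of scale $\rho$ with probability at least $c$. The paper introduces $t_x=\tau^L_{A_x^c}$ for a box $A_x$ of side $\sim |x|$, tiles $A_x$ by $N\sim 1/M$ nested boxes $A^i$ of scale $\sim M|x|$ (\emph{not} dyadic), applies the \cite{BJ} bound to each increment $t^i-t^{i-1}$, and feeds the resulting conditional lower bounds into the large-deviation lemma \cite[Lemma~1.1]{BB} to obtain $q_{x,M}=\mathbf{P}(t_x\le M|x|^2)\le C\exp(-c/M)$. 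The choice of $\sim 1/M$ boxes of equal scale is what produces the exponent $-c/M$: each is crossed ``too fast'' with probability at most $1-c_1$, and there are $\sim 1/M$ of them. A dyadic tiling, by contrast, would have its total crossing time dominated by the single largest scale and would yield only a bounded number of effective constraints, hence at best $\exp(-c)$, even with the correct LERW crossing input.
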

\bigskip

Before diving into the proof, we observe that it is enough to show \eqref{upper-small} only for the case that both $|x|^{-1}$ and $M$ are sufficiently small. To see this, suppose that there exist some $c_1,c_2 \in (0, \infty)$ and $r_0\in (0,1)$ such that the inequality \eqref{upper-small} holds with constants $c_1,c_2$ for all $x$ and $ M$ satisfying $|x|^{-1} \vee M \le r_{0}$. The remaining cases we need to consider are (i) $|x|^{-1} \ge r_{0}$ and (ii) $M \in [r_{0}, 2)$. We first deal with case (i). If we suppose that $|x|^{-1} \ge r_{0}$ and $M < r_{0}^{2}$, then $M |x|^{2} < 1$, and so the probability on the left-hand side of \eqref{upper-small} is equal to $0$. On the other hand, if $|x|^{-1} \ge r_{0}$ and $M \in [r_{0}^{2}, 2)$, by choosing the constant $c_1$ to satisfy $c_1 \ge 2^{d/2-1}r_{0}^{2-d} \exp \{ c_2 r_{0}^{-2} \}$, we can ensure the right-hand side of \eqref{upper-small} is greater than 1, and so the inequality \eqref{upper-small} also holds in this case. Let us move to case (ii). We note that the probability on the left-hand side of \eqref{upper-small} can be always bounded above by
\[ \Pb \left( \tau^{S}_{x} < \infty \right) \le C|x|^{2-d}\]
for some constant $C \in (0, \infty)$, where we have applied \eqref{srwbound2} to deduce the inequality. Thus, choosing the constant $c_1$ so that $c_1\ge C 2^{d/2-1}\exp \{ c_2 r_{0}^{-1} \}$, the inequality \eqref{upper-small} holds. Consequently, replacing the constant $c_1$ by $c_1\vee 2^{d/2-1}r_{0}^{2-d} \exp \{ c_2 r_{0}^{-2} \} \vee C 2^{d/2-1}\exp \{ c_2 r_{0}^{-1} \}$, the inequality \eqref{upper-small} holds for all $x \in \mathbb{Z}^{d} \setminus \{ 0 \}$ and $M \in (0, 2)$.

We next give a brief outline of the proof of Proposition \ref{upper-M-small}, assuming that both $|x|^{-1}$ and $M$ are sufficiently small. We write $A_{x} = B_{\infty} ( 0, |x|/4 \sqrt{d})$ for the box of side length $|x|/2 \sqrt{d}$ centered at the origin, and let
\[t_{x} = \tau^{L}_{A_{x}^{c}}\]
be the first time that $L$ exits $A_{x}$. Note that $x\not\in A_x$, and so
\begin{align}\label{ds1}
\Pb \left( \tau_{x} \le M |x|^{2} \right) \le \Pb \left( t_{x} \le \tau_{x} \le M |x|^{2} \right)  \le \Pb \left( \tau_{x} < \infty \:\vline\: t_{x} \le M |x|^{2}   \right) \, \Pb \left( t_{x} \le M |x|^{2} \right).\end{align}
Writing
\begin{equation}\label{ds1'}
p_{x, M} = \Pb \left( \tau_{x} < \infty \:\vline\: t_{x} \le M |x|^{2}   \right) \ \ \ \text{ and }  \ \ \ q_{x, M} = \Pb \left( t_{x} \le M |x|^{2} \right),
\end{equation}
we will show that
\[p_{x, M} \le C |x|^{2-d},\qquad q_{x, M} \le C \exp \{ c M^{-1} \}\]
in Lemmas \ref{ds-lem-1} and \ref{ds-lem-2} below, respectively. Proposition \ref{upper-M-small} is then a direct consequence of these lemmas.

We start by dealing with $p_{x, M}$, as defined in \eqref{ds1'}.

\begin{lem}\label{ds-lem-1}
There exists a constant $C \in (0, \infty)$ such that for all $x \in \mathbb{Z}^{d} \setminus \{ 0 \}$ and $M \in (0,2)$ with $\Pb ( t_{x} \le M |x|^{2} ) > 0$,
\[p_{x, M} \le C |x|^{2-d}.\]
\end{lem}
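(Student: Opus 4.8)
\textbf{Proof plan for Lemma \ref{ds-lem-1}.}

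The plan is to bound $p_{x,M}$ by conditioning on the position where $L$ first exits the small box $A_x$ and on the past of $L$ up to that time, then using the domain-Markov property of LERW to reduce matters to an intersection/hitting estimate for the simple random walk. Concretely, write $t_x$ for the exit time of $A_x=B_\infty(0,|x|/4\sqrt d)$ and let $L[0,t_x]$ denote the loop-erasure up to that time. On the event $\{t_x\le M|x|^2\}$, the point $x$ lies outside $A_x$, so $\tau_x<\infty$ forces the remaining infinite loop-erased path $\mathrm{LE}(S[t_x,\infty))$ emanating from $L_{t_x}\in\partial_i A_x$ (after the appropriate conditioning coming from the loops erased so far) to eventually pass through $x$. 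The standard domain-Markov description of infinite LERW says that, conditionally on $L[0,t_x]=\gamma$, the remainder of $L$ is distributed as the loop-erasure of an SRW started at $\gamma_{t_x}$ and conditioned to avoid $\gamma[0,t_x-1]$. Dropping the conditioning (which can only help, since avoiding a set only reduces the chance of later hitting $x$ — this monotonicity should be argued via Wilson's algorithm or a direct coupling) we get
\[
p_{x,M}\le \sup_{y\in \partial_i A_x}\mathbf{P}^{y}\left(\tau^{S}_{x}<\infty\right),
\]
up to harmless constants accounting for the conditioning event having positive probability.

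The next step is to estimate $\mathbf{P}^{y}(\tau^S_x<\infty)$ for $y\in\partial_i A_x$, i.e.\ for $y$ with $|y|\asymp |x|$ but more importantly with $|x-y|\ge |x|-|x|/4\sqrt d\asymp |x|$ since $x\notin \overline{A_x}$ and in fact $\mathrm{dist}(x,A_x)\gtrsim |x|$. By transience of SRW in $d\ge 5$ and the standard Green's function asymptotics $G(y,x)\asymp |x-y|^{2-d}$ (see \cite[Theorem 4.3.1]{LawLim} or the discussion around \eqref{srwbound2}), together with the last-exit decomposition $\mathbf{P}^y(\tau^S_x<\infty)=G(y,x)/G(0)$, we obtain $\mathbf{P}^y(\tau^S_x<\infty)\asymp |x-y|^{2-d}\lesssim |x|^{2-d}$, uniformly over $y\in\partial_i A_x$. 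This yields $p_{x,M}\le C|x|^{2-d}$ as required, with a constant independent of $M$.

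Two points need care. First, the reduction from the conditioned SRW (avoiding the erased past $\gamma[0,t_x-1]$) to the unconditioned SRW: one wants $\mathbf{P}^{y}(\tau^S_x<\infty \mid S[0,\infty)\cap\gamma[0,t_x-1]=\emptyset)\le C\,\mathbf{P}^y(\tau^S_x<\infty)$ (or the corresponding statement for the loop-erasure hitting $x$), and this is not literally a monotone event, so the cleanest route is probably to use Wilson's algorithm rooted at infinity to realise $L$, generating first the branch from $0$ (which contains $L[0,t_x]$) and then the branch from $x$: then $x\in V(\mathcal{G})$ relates to whether the SRW from $x$ hits the already-drawn path, and a Green's-function/second-moment bound gives the $|x|^{2-d}$ order. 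Second, one should make sure the event $\{x\in \mathcal{G}\}$ is exactly what $\tau_x<\infty$ means and that the $t_x\le M|x|^2$ conditioning only enters through restricting the starting point $L_{t_x}$ to $\partial_i A_x$; since the exit point of $A_x$ is measurable with respect to $L[0,t_x]$ and the subsequent behaviour is governed purely by $L_{t_x}$ (via the domain-Markov property), this decoupling is clean. I expect the mild obstacle to be precisely formulating and justifying this domain-Markov/Wilson's-algorithm reduction with the conditioning, rather than the SRW Green's function estimate, which is entirely standard.
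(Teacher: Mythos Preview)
Your overall strategy---use the domain Markov property to reduce to a hitting estimate for SRW, then apply the Green's function bound $\mathbf{P}^y(\tau^S_x<\infty)\asymp |x-y|^{2-d}$---is exactly that of the paper. The gap is in the step where you ``drop the conditioning''. The claim that conditioning SRW to avoid $\gamma[0,t_x-1]\subset A_x$ can only \emph{decrease} the chance of later hitting $x$ is not a valid monotonicity: since $\gamma$ lies on the origin side of the starting point $L_{t_x}$ while $x$ lies on the far side, the $h$-transform that implements the avoidance conditioning pushes the walk \emph{away} from $A_x$, which may well push it \emph{toward} $x$. There is no FKG-type or coupling argument that yields this inequality in general, and your Wilson's-algorithm workaround does not resolve it either: generating the branch from $x$ tells you where SRW from $x$ first meets $L$, but ``$x\in L$'' is a property of the first branch alone and is not directly expressed as a hitting event for the second branch.

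The paper's fix is short and worth knowing. Rather than trying to remove the conditioning at the exit point of $A_x$ (where the avoidance event can have arbitrarily small probability, since the endpoint is adjacent to $\gamma$), one first lets the \emph{conditioned} walk $R=R^\gamma$ run until it exits the larger ball $B(0,|x|/2)$, reaching some $y\in\partial B(0,|x|/2)$. From such $y$, the conditioning event $\{S[0,\infty)\cap\gamma=\emptyset\}$ has probability bounded below by a universal constant $c_0>0$, because $\gamma\subset A_x\subset B(0,|x|/4)$ and SRW from $\partial B(0,|x|/2)$ escapes to infinity without entering $B(0,|x|/4)$ with uniformly positive probability (this is \eqref{srwbound}). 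Hence
\[
\mathbf{P}^y_R\big(x\in R[0,\infty)\big)\le \frac{\mathbf{P}^y\big(x\in S[0,\infty)\big)}{\mathbf{P}^y\big(S[0,\infty)\cap\gamma=\emptyset\big)}\le c_0^{-1}\,\mathbf{P}^y\big(\tau^S_x<\infty\big)\le C|x|^{2-d},
\]
and taking the supremum over $y\in\partial B(0,|x|/2)$ and over admissible $\gamma$ gives the bound. In short: do not attempt monotonicity; instead, run the conditioned chain out to a scale where the conditioning event has uniformly positive probability, and pay the constant $c_0^{-1}$.
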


\begin{proof}
Let
\[\Lambda = \left\{ \lambda \::\: \Pb \left( t_{x} \le M |x|^{2},\:L [0, t_{x} ] = \lambda \right) > 0 \right\}\]
be the set of all possible paths for $L [0, t_{x} ] $ satisfying $ t_{x} \le M |x|^{2}$. For $\lambda \in \Lambda$, we write $R = R^{\lambda}$ for a random walk conditioned on the event that $R [1, \infty ) \cap \lambda = \emptyset$. Note that $R$ is a Markov chain (see \cite[Section 11.1]{LawLim}).  We use $\mathbf{P}^{y}_{R}$ to denote the law of $R$ started from $R_{0} = y$. Then the domain Markov property for $L$ (see \cite[Proposition 7.3.1]{Lawb}) ensures that
\[p_{x, M} = \frac{\sum_{\lambda \in \Lambda} \mathbf{P}^{\lambda_{\text{len} (\lambda)}}_{R} \left( x \in \text{LE} \left( R [0, \infty ) \right) \right) \Pb ( L [0, t_{x} ] = \lambda ) }{q_{x, M}} \le \max_{\lambda \in \Lambda} \mathbf{P}^{\lambda_{\text{len} (\lambda)}}_{R} \left( x \in R [0, \infty )  \right).\]
Therefore, it suffices to show that there exists a constant $C \in (0, \infty)$ such that for all $x \in \mathbb{Z}^{d} \setminus \{ 0 \}$, $M \in (0, 2)$ with $\Pb ( t_{x} \le M |x|^{2} ) > 0$ and $\lambda \in \Lambda$,
\[\mathbf{P}^{\lambda_{\text{len} (\lambda)}}_{R} \left( x \in R [0, \infty )  \right) \le C |x|^{2-d}.\]

With the above goal in mind, let us fix $\lambda \in \Lambda$. We set $u :=\tau^{R}_{ B ( |x| / 2 )^{c} }$ for the first time that $R$ exits $B (|x|/ 2)$. (Note that $A_{x} \subseteq B (|x| / 2)$ by our construction.) Using the strong Markov property for $R$ at time $u$, we have
\[\mathbf{P}^{\lambda_{\text{len} (\lambda)}}_{R} \left( x \in R [0, \infty )  \right) \le \max_{y \in \partial B (|x|/2) } \mathbf{P}^{y}_{R} \left( x \in R [0, \infty )  \right).\]
On the other hand, it follows from \eqref{srwbound} that
\[\min_{ y \in \partial B (|x|/2) } \Pb^{y} \left( S [0, \infty ) \cap \lambda = \emptyset \right) \ge \min_{ y \in \partial B (|x|/2) } \Pb^{y} \left( S [0, \infty ) \cap A_{x} = \emptyset \right) \ge c_{0} \]
for some constant $c_{0} > 0$. Combining these estimates and  using \eqref{srwbound2}, we see that, for each $y \in \partial B (|x| / 2)$,
\[\mathbf{P}^{y}_{R} \left( x \in R [0, \infty )  \right) \leq \frac{\Pb^{y} \left( x \in S [0, \infty ) \right)}{\Pb^{y} \left( S [0, \infty ) \cap \lambda = \emptyset \right)} \le \frac{1}{c_{0}}  \Pb^{y} \left( x \in S [0, \infty ) \right) \le C |x|^{2-d}\]
for some constant $C \in (0, \infty)$. This finishes the proof.
\end{proof}

Recall that $q_{x, M}$ was defined at \eqref{ds1'}. We will next estimate $q_{x, M}$ as follows.

\begin{lem}\label{ds-lem-2}
There exist constants $c, C \in (0, \infty)$ such that for all $x \in \mathbb{Z}^{d} \setminus \{ 0 \}$ and $M \in (0, 2)$,
\begin{equation}\label{ds4}
q_{x, M} \le C \exp \{ -c M^{-1} \}.
\end{equation}
\end{lem}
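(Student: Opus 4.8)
\textbf{Proof proposal for Lemma \ref{ds-lem-2}.}

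The quantity $q_{x,M} = \mathbf{P}(t_x \le M|x|^2)$ is the probability that the LERW $L$ exits the box $A_x = B_\infty(0, |x|/4\sqrt{d})$ within time $M|x|^2$. Since $M$ is small, this forces $L$ to traverse a distance of order $|x|$ in a time much shorter than $|x|^2$, which is super-diffusive and hence exponentially unlikely. The plan is to transfer this event to the underlying simple random walk $S$. The key observation is that loop-erasure only removes points, so $L[0, t_x] \subseteq S[0, n]$ for a suitable $n$; more precisely, if $L$ has exited $A_x$ by its step $t_x$, and $L_{t_x} = S_{\sigma_{t_x}}$ in the notation of \eqref{sigmadef}, then $S$ has reached $\partial_i A_x^c$ (a point at $\ell_\infty$-distance roughly $|x|/4\sqrt{d}$ from the origin, hence Euclidean distance at least $|x|/4\sqrt{d}$) by time $\sigma_{t_x}$. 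The difficulty is that $\sigma_{t_x}$ can be much larger than $t_x$, so one cannot directly say "$S$ travels distance $|x|$ in time $M|x|^2$."

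To handle this, I would use the standard fact (a consequence of the relation between a walk and its loop-erasure — see \cite[Section 7.2]{Lawb} or the exponential tail bounds on the number of erased steps) that on the event $\{t_x \le M|x|^2\}$ one can find a deterministic $n = n(x,M)$ of order, say, $|x|^{3/2}$ or any power strictly between $|x|$ and $|x|^2$, such that with overwhelming probability $\sigma_{t_x} \le n$; alternatively, and more robustly, decompose according to the value of $\sigma_{t_x}$. Concretely, writing $T = \tau^S_{A_x^c}$ for the exit time of $A_x$ by $S$ itself, we have $T \le \sigma_{t_x}$ always (since $L[0,t_x]\subseteq S[0,\sigma_{t_x}]$ and $L$ has left $A_x$), but this is the wrong direction. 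The right statement is: if $t_x \le M|x|^2$ then $S$ restricted to its first $t_x$ loop-erased steps has left $A_x$, and since each loop-erased step corresponds to a time increment $\sigma_i - \sigma_{i-1} \ge 1$ but the loop-erased path itself is a subset of the walk path, the walk must reach distance $\ge |x|/4\sqrt{d}$ from $0$ having taken at most... — here is where care is needed, and I would instead argue via the number of distinct points: $L[0,t_x]$ consists of $t_x + 1 \le M|x|^2 + 1$ distinct points, one of which is outside $A_x$, and all of which lie on $S[0,\infty)$. So $S[0,\infty)$ contains a self-avoiding path of length $\le M|x|^2$ from $0$ to outside $A_x$.

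The cleanest route is then: $q_{x,M} \le \mathbf{P}(\exists\, 0 \le k \le M|x|^2 : \text{LE}(S[0,\infty))_k \notin A_x)$, and by the domain Markov / first-$k$-steps structure of infinite LERW this is at most $\mathbf{P}(S \text{ reaches } \partial A_x \text{ using a path whose loop-erasure has} \le M|x|^2 \text{ steps})$. A concrete sufficient event is that $S$ exits $A_x$ by time $\lfloor M|x|^2 \rfloor$ OR the loop-erasure compresses a long excursion — but since we only need an upper bound, split on a deterministic scale: $q_{x,M} \le \mathbf{P}(T \le L_0) + (\text{correction})$ is not quite it either. I think the honest approach, and the one I expect the authors take, is: choose intermediate scale and write $q_{x,M} \le \mathbf{P}(\tau^S_{A_x^c} \le M^{1/2}|x|^2) + \mathbf{P}(\text{between time } M^{1/2}|x|^2 \text{ and } \infty, \text{ loop-erasure shrinks the first } M^{1/2}|x|^2 \text{ steps below } M|x|^2 \text{ steps})$; the first term is bounded by standard SRW exit-time estimates — $\mathbf{P}^0(\max_{j \le N}|S_j| \ge r) \le C\exp(-cr^2/N)$, giving $C\exp(-c/M^{1/2}) \le C\exp(-c/M)$ after adjusting — and the second is an exponentially small probability that loop-erasure deletes more than a $(1 - M^{1/2})$ fraction of a long initial segment, controllable since the expected number of steps erased from $S[0,N]$ is $\asymp N$ with exponential concentration. \textbf{The main obstacle} I anticipate is precisely quantifying the interplay between the walk-time $\sigma_{t_x}$ and the loop-erased time $t_x$: one needs a clean statement that "$L$ moving fast" implies "$S$ moving fast" without an uncontrolled time penalty from loop-erasure, and the right formulation may be to bound $q_{x,M}$ by the probability that $S$ exits $A_x$ and its loop-erasure up to that exit time is short, then use that the loop-erasure of a walk segment that exits a ball of radius $R$ has length comparable to $R^2$ with Gaussian-type lower tails — which is itself a nontrivial LERW input, though weaker than Theorem \ref{thm:main1}.
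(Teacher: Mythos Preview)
Your proposal does not arrive at a working argument, and the specific route you sketch has a concrete error. In your intermediate-scale split you bound the first term by $\mathbf{P}(\tau^{S}_{A_{x}^{c}}\le M^{1/2}|x|^{2})\le C\exp(-cM^{-1/2})$ and then claim this is ``$\le C\exp(-c/M)$ after adjusting''. For small $M$ one has $M^{-1/2}\ll M^{-1}$, so $\exp(-cM^{-1/2})$ is \emph{much larger} than $\exp(-cM^{-1})$: the inequality goes the wrong way, and this split can only ever produce the weaker bound $\exp(-cM^{-1/2})$. Your second term is not a well-defined event, and your closing remark---that one needs to know the loop-erasure of a walk exiting a ball of radius $R$ has length $\asymp R^{2}$ with Gaussian-type lower tails---is essentially the statement of the lemma itself, so the argument is circular.

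The paper's proof avoids the SRW-time/LERW-time comparison entirely by working directly with the LERW and using a \emph{shell decomposition}. One nests boxes $A^{1}\subset\cdots\subset A^{N}\subset A_{x}$ of side length $\asymp M|x|$, $2M|x|$, \dots, so that $N\asymp M^{-1}$, and lets $t^{i}$ be the first exit of $L$ from $A^{i}$. A known LERW estimate (\cite[Corollary~3.10]{BJ}, a consequence of the domain Markov property) gives, uniformly in the past,
\[
\mathbf{P}\bigl(t^{i}-t^{i-1}\ge c_{1}(M|x|)^{2}\,\big\vert\, L[0,t^{i-1}]\bigr)\ge c_{1}.
\]
Thus $t_{x}\ge\sum_{i=1}^{N}(t^{i}-t^{i-1})$ is a sum of $N\asymp M^{-1}$ nonnegative terms, each of which is $\ge c_{1}M^{2}|x|^{2}$ with conditional probability $\ge c_{1}$. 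For the sum to fall below $M|x|^{2}=N\cdot c_{1}M^{2}|x|^{2}/c_{1}$, almost all of the $N$ terms must be small, which costs $(1-c_{1})^{cN}=\exp(-c/M)$; the paper makes this precise via the general tail lemma \cite[Lemma~1.1]{BB}. The point you were missing is that the exponent $1/M$ arises not from a single SRW large-deviation event but from the \emph{number of shells} $N\asymp 1/M$ that $L$ must cross.
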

\begin{proof}
As per the discussion after \eqref{upper-small}, it suffices to prove \eqref{ds4} only in the case that both $|x|^{-1}$ and $M$ are sufficiently small. In particular, throughout the proof, we assume that
\begin{equation}\label{ds-4-2}
M \le (3200  d)^{-1}.
\end{equation}
Furthermore, we may assume
\begin{equation}\label{ds-4-1}
| x | M \ge (4 \sqrt{d} )^{-1},
\end{equation}
since  $q_{x, M} = 0$ when $ | x | M < (4 \sqrt{d} )^{-1}$. (Notice that it must hold that $t_{x} \ge |x| (4 \sqrt{d} )^{-1}$.)

Now, define the increasing sequence of boxes $\{ A^{i} \}_{i = 1 }^{N}$, where $N = \lfloor (1600   d  M)^{-1} \rfloor$, by setting
\[A^{i} = B_{\infty} \left( 0,  400 \sqrt{d} \, |x| M i  \right)\]
for $1 \le i \le N$. Observe that the particular choice of $N$ ensures that $A^{N} \subseteq A_{x} = B_{\infty} ( 0, |x|/4 \sqrt{d})$, and the assumption \eqref{ds-4-2} guarantees that
\begin{equation}\label{ds-4-3}
N \ge (3200  d M)^{-1}.
\end{equation}
Also, we note that  $\text{dist} ( \partial A^{i-1}, \partial A^{i})$ is bigger than $400 \sqrt{d} \, |x| M -1$, which is in turn bounded below by $99$ because of \eqref{ds-4-1}. As a consequence, it is reasonable to compare the number of lattice points in the set $ L \cap (A^{i} \setminus A^{i-1} ) $ with $|x|^{2} M^{2}$. To be more precise, let $t^{0} = 0$, and, for $ i \ge 1$, set
\[t^{i} = \tau^{L}_{(A^{i})^{c}}\]
to be the first time that $L$ exits $A^{i}$. Then \cite[Corollary 3.10]{BJ} shows that there exists a deterministic constant $c_{1}\in(0,1)$ such that for all $x \in \mathbb{Z}^{d} \setminus \{ 0 \}$ and  $M \in (0, 2)$ satisfying \eqref{ds-4-2} and \eqref{ds-4-1},
\begin{equation}\label{ds5}
\Pb \left( t^{i} - t^{i-1} \ge c_{1} |x|^{2} M^{2} \:\vline\: L [0, t^{i-1} ] \right) \ge c_{1}, \qquad 1 \le i \le N.
\end{equation}
With the inequality \eqref{ds5} and a constant $a>0$ satisfying
\begin{equation}\label{ds6}
2 \sqrt{\frac{2 a}{1- c_{1}}} < \frac{1}{6400  d} \log \frac{1}{1- c_{1}},
\end{equation}
it is possible to apply \cite[Lemma 1.1]{BB} to deduce the result of interest. In particular, the following table explains how the quantities of this article are substituted into \cite[Lemma 1.1]{BB}.
\begin{center}
\begin{tabular}{r||c|c|c|c|c|c}
\cite[Lemma 1.1]{BB} &  $X$ & $Y_{i}$ &  $n$ & $p$ & $b$ & $x$ \\\hline
This article & $t_{x}$ &$t^{i} - t^{i-1}$ & $N$ & $1 - c_{1}$ & $2 \, |x|^{-2} M^{-2}$ & $a M |x|^{2}$
\end{tabular}
\end{center}
Then, from \cite[Lemma 1.1]{BB}, one has
\begin{align*}
\Pb \left( t_{x} \le a M |x|^{2} \right) &{\le} \exp \left\{  2 M^{-1} \sqrt{\frac{2 a}{1- c_{1}}}   -  N \log \frac{1}{1- c_{1}}  \right\}  \\
&{\le} \exp \left\{  \left( 2   \sqrt{\frac{2 a}{1- c_{1}}}   - \frac{1}{3200d} \log \frac{1}{1- c_{1}}  \right)   M^{-1}   \right\}  \\
&{\le} \exp \left\{ - \frac{M^{-1} }{6400 d}  \log \frac{1}{1- c_{1}} \right\},
\end{align*}
where for the second and third inequalities, we apply \eqref{ds-4-3} and \eqref{ds6}, respectively. Rewriting $a M = M'$ completes the proof.
\end{proof}

\begin{proof}[Proof of Proposition \ref{upper-M-small}]
Proposition \ref{upper-M-small} follows directly from \eqref{ds1} and Lemmas \ref{ds-lem-1} and \ref{ds-lem-2} (and the basic observation that $M^{1-d/2}\geq 2^{1-d/2}$ for $M\in (0,2)$).
\end{proof}

\subsection{Lower bound for $M\in(0,1)$}

Recall that for $x\in\mathbb{Z}^d\setminus\{0\}$, $\tau_x$ indicates the first time that $L$ hits $x$. The aim of this section is to bound below the probability that $\tau_x$ is much smaller than $|x|^2$. In particular, the following is the main result of this section, which readily implies the part of the lower bound of Proposition \ref{Lprop} with $|x|^{-1}\leq M<1$.

\begin{prop}\label{sw-prop}
There exist constants $c,c',R\in(0,\infty)$ such that for every $x\in\mathbb{Z}^d\backslash\{0\}$ and $|x|^{-1}\le M<1$,
\begin{equation}\label{sw-0}
	\mathbf{P}\left(\tau_x\in[R^{-1}M|x|^2,RM|x|^2]\right)\ge c'(M|x|^2)^{1-d/2}\exp\left(-\frac{c}{M}\right).
\end{equation}
\end{prop}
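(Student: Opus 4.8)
\textbf{Proof proposal for Proposition \ref{sw-prop}.}

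The plan is to produce the lower bound by forcing the loop-erased random walk to reach $x$ quickly along a ``corridor'' of roughly diffusive width, and then estimating the relevant probability via the domain Markov property and a second-moment / Harnack-type argument. Concretely, set $n=M|x|^2$, which is an integer-order quantity at least $|x|$ by the assumption $M\geq|x|^{-1}$, and note that this is the diffusive time scale on which a simple random walk, and hence its loop-erasure, travels distance of order $|x|$. First I would introduce a tube $T$ of length $\asymp|x|$ and cross-sectional radius $\asymp\sqrt{n}=\sqrt{M}|x|$ joining the origin to $x$ (a union of $O(\sqrt{M}^{-1})$ overlapping boxes of side $\asymp\sqrt{n}$ along the straight segment $[0,x]$), together with the requirement that the SRW $S$ stays in $T$ until it first reaches a neighbourhood of $x$. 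On this event the loop-erasure $L$ also stays in $T$, and since $T$ has diameter $\asymp|x|$, the hitting time $\tau_x$ is forced into a window $[R^{-1}M|x|^2,RM|x|^2]$ for suitable $R$: the lower end because $L$ is a simple path of length at least $|x|\gg$ nothing, but more to the point the time to traverse is at least $\mathrm{len}(L[0,\tau_x])\geq|x|$ and we also need a genuine lower bound, which I would get from the fact that confinement of $S$ to a tube of width $\sqrt{n}$ for the relevant number of steps already costs an exponential, so we may as well demand $\tau_x\geq R^{-1}n$; the upper end because staying in a tube of width $\sqrt n$ while travelling distance $|x|$ takes $O(n)$ steps with the stated probability.

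The core estimate is then a lower bound of the form
\[
\mathbf{P}\bigl(x\in L[0,\infty)\text{ and }S[0,\tau^S_x]\subseteq T\bigr)\;\geq\;c'(M|x|^2)^{1-d/2}\exp(-c/M).
\]
Here $|x|^{2-d}=(M|x|^2)^{1-d/2}M^{d/2-1}$, so the polynomial factor $(M|x|^2)^{1-d/2}$ is exactly $|x|^{2-d}$ up to the harmless factor $M^{d/2-1}$, which is itself controlled by $\exp(-c/M)$ after adjusting constants; thus it is equivalent to show a lower bound of order $|x|^{2-d}\exp(-c/M)$. I would obtain $\mathbf{P}(x\in L[0,\infty))\asymp|x|^{2-d}$ from standard escape-probability estimates (this is the same input, via \eqref{srwbound2}, used in Lemma \ref{ds-lem-1} for the matching upper bound), and then pay an extra $\exp(-c/M)$ for the tube confinement: confining $S$ to each of the $O(1/\sqrt M)$ successive boxes of side $\asymp\sqrt M|x|$ for the $O(M|x|^2)$-many steps needed to cross it, before exiting on the far side, costs a constant factor per box by the gambler's ruin estimate \eqref{gambler} (projecting onto the direction of $x$ and onto transverse directions), and the product of $O(1/\sqrt M)$ constants is $\exp(-c/M^{1/2})\geq\exp(-c/M)$ for $M<1$. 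One has to be a little careful to combine ``$S$ traverses the tube'' with ``$L$ actually passes through $x$'': I would condition on the portion of $L$ up to the first box containing $x$ using the domain Markov property (\cite[Proposition 7.3.1]{Lawb}), reducing matters to a conditioned walk $R$ as in Lemma \ref{ds-lem-1}, and use that conditioning $S$ not to hit its past only changes probabilities by bounded factors when the past is confined to a box at Euclidean distance $\asymp|x|$ — again via \eqref{srwbound}.

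The main obstacle I anticipate is the lower bound on $\tau_x$, i.e.\ ensuring $\tau_x\geq R^{-1}M|x|^2$ rather than merely $\tau_x\leq RM|x|^2$. The length of the loop-erasure is not comparable to the number of SRW steps in a simple way, so I cannot directly read off a lower bound on $\tau_x$ from ``$S$ took many steps.'' My plan is to instead build the lower bound on $\tau_x$ into the event itself: require that $S$ first exits a \emph{small} inner box $B_\infty(0,\varepsilon\sqrt n)$ only after some time, or more robustly, intersect with the event $\{\tau_x\geq R^{-1}n\}$ and argue that removing it costs at most a constant factor — this follows because, conditionally on reaching $x$ through the tube, the loop-erased path has length at least $\asymp|x|$, and to have $\tau_x< R^{-1}n=R^{-1}M|x|^2$ the walk $S$ would have to make an atypically direct crossing, whose probability is of strictly smaller exponential order (a large-deviation lower bound on crossing time for a confined walk, or equivalently an upper bound of the type already proved in Proposition \ref{upper-M-small}, applied at a slightly smaller scale). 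A clean way to sidestep delicate estimates is to choose $R^{-1}$ small enough that $\mathbf{P}(\tau_x<R^{-1}n\mid \text{tube event})\leq\tfrac12$ by comparison with the upper bound of Proposition \ref{upper-M-small} (with $M$ replaced by $R^{-1}M$), so that intersecting with $\{\tau_x\geq R^{-1}n\}$ only halves the probability. Assembling these pieces — tube confinement cost $\exp(-c/\sqrt M)$, escape-to-$x$ probability $\asymp|x|^{2-d}$, domain-Markov bounded corrections, and the two-sided time localisation — yields \eqref{sw-0}.
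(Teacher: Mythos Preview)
Your overall picture is right --- a tube of boxes from $0$ to $x$, constant cost per box, the box containing $x$ contributes the polynomial factor --- but the scaling is off in a way that makes the argument fail as written. You assert that $n=M|x|^2$ ``is the diffusive time scale on which a simple random walk\dots\ travels distance of order $|x|$'', but in time $n$ the walk travels $\sqrt{n}=\sqrt{M}\,|x|$, which for $M\ll1$ is far short of $|x|$. The whole point of the regime $M<1$ is that the LERW must reach $x$ \emph{faster} than diffusively, and that atypicality is the source of the factor $\exp(-c/M)$. Concretely: with your tube width $\sqrt{n}=\sqrt{M}\,|x|$ there are $1/\sqrt{M}$ boxes, and in each box the walk spends order $(\sqrt{M}\,|x|)^2=M|x|^2=n$ steps, so the total is $n/\sqrt{M}=\sqrt{M}\,|x|^2$, not $n$. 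Your tube event therefore pins $\tau_x\asymp\sqrt{M}\,|x|^2$, not $\tau_x\asymp M|x|^2$, and the claim ``takes $O(n)$ steps'' is wrong by a factor $1/\sqrt{M}$. The correct choice is tube width $\asymp M|x|$, hence $\asymp 1/M$ boxes; then the per-box cost multiplies to $\exp(-c/M)$ (this is where that exponent comes from, not $\exp(-c/\sqrt{M})$), and the per-box time $(M|x|)^2$ summed over $1/M$ boxes gives $M|x|^2$ as required. This is exactly the geometry the paper uses.

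Once the width is fixed, your subtraction idea for the lower end of the time window --- bounding $\mathbf{P}(\tau_x<R^{-1}M|x|^2)$ by Proposition~\ref{upper-M-small} and checking it is at most half of the tube-event probability for $R$ large --- does work and is a legitimate shortcut around the paper's more elaborate route. The paper instead builds the lower bound on $\tau_x$ directly into the event via cut points: in each box it forces the existence of $\asymp (M|x|)^2$ local cut times of $S$ (the events $H_i(l)$, established by a second-moment argument in Lemma~\ref{sw-lem-1}), and these survive to the global loop-erasure because of carefully arranged ``nice cut times'' at the box interfaces. That machinery is also what the paper uses to pass from the per-box loop-erasures $\lambda_i$ to the global loop-erasure $\xi_{N_M}$ --- loop-erasure does not commute with concatenation, a point your sketch does not address. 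Your subtraction trick sidesteps this for the lower bound, but you still need a genuine \emph{upper} bound $\tau_x\le RM|x|^2$ on the tube event; for that you should control the total SRW time $\tau^S_x$ (which dominates $\tau_x$) by a conditional first-moment bound in each box and Markov's inequality, analogous to the paper's Lemma~\ref{sw-lem-2}.
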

\bigskip

Before moving on to the proof, we will first show that once we prove that there exists a constant $n_0\geq1$ such that (\ref{sw-0}) holds for $n_0|x|^{-1}\leq M<1$,
we obtain (\ref{sw-0}) for every $x\in \mathbb{Z}^d\setminus \{0\}$ and $|x|^{-1}\le M<1$ by adjusting $c$, $c'$ and $R$ as needed. Let us consider the following three events:
\begin{itemize}
	\item $S[0,\tau^S_x]$ is a simple path of length $\lceil M|x|^2\rceil$,
	\item $S[\tau^S_x+1,\tau^S_{B(0,2r)^c}]$ is a simple path that does not intersect $S[0,\tau^S_x]$,
	\item $S[\tau^S_{B(0,2r)^c},\infty)\cap B(0,\frac{3}{2}|x|)=\emptyset$,
\end{itemize}
where we set $r=|x|\vee n_0$. It is straightforward to see that $\tau_x\in[M|x|^2,2M|x|^2]$ holds on the intersection of these events. By constructing a simple random walk path that satisfies the first two conditions up to the first exit time from the Euclidean ball $B(0,2r)$ and then applying (\ref{srwbound}) and the strong Markov property, we have
\[\mathbf{P}\left(\tau_x\in[M|x|^2,2M|x|^2]\right)\ge a(2d)^{-2 M|x|^2}(2d)^{-2r},\]
for some $a>0$ that does not depend on $M$ or $x$. Suppose $1\leq M|x|\leq n_0$. If $|x|< n_0$, then the right-hand side is bounded below as follows:
\[a(2d)^{-2 M|x|^2}(2d)^{-2r}\ge a(2d)^{-(2n_0^2+2n_0)}\geq C\geq c'(M|x|^2)^{1-d/2}e^{-\frac{c}{M}}.\]
On the other hand, if $|x|\ge n_0$, then the right-hand side satisfies
\[a(2d)^{-2 M|x|^2}(2d)^{-2r}\ge C\left( M|x|^2\right)^{1-d/2}e^{-cM|x|^2-c'|x|}\geq C\left( M|x|^2\right)^{1-d/2}e^{-c''/M}.\]
In particular, by replacing $R$ by $R\vee 2$ and adjusting $c$, $c'$ appropriately, the result at \eqref{sw-0} can be extended to $1\leq M|x|<|x|$.

The structure of this section is as follows. First, we define several subsets of $\mathbb{R}^d$. These will be used to describe a number of events involving the simple random walk $S$ whose loop-erasure is $L$. We provide some key estimates on the probabilities of these events in Lemmas \ref{sw-lem-2} and Lemma \ref{sw-lem-1}. Finally, applying these results, we prove Proposition \ref{sw-prop} at the end of this section.

We begin by defining ``a tube connecting the origin and $x$'', which will consist of a number of boxes of side-length $M|x|$. To this end, for $M\in(0,1)$, let
\[N_M=\left\lceil \frac{1}{M}+\frac{1}{2}\right\rceil.\]
Moreover, for $x=(x^1,\dots, x^d)\in\mathbb{Z}^d\setminus \{0\}$ and $M\in(0,1)$, define a sequence $\{b_i\}$ of vertices of $\mathbb{R}^d$ by setting
\begin{equation}\label{sw-1}
b_i=\left(iMx^1,\dots,iMx^d\right)
\end{equation}
for  $i\in\{0,1,\dots,2N_M\}$. Let us consider a rotation around the origin that aligns the $x^1$-axis with the line through the origin and $x$. We denote by $B$ and $Q$ the images of $[-{M|x|}/{2},{M|x|}/{2}]^d$ and $\{0\}\times [-{M|x|}/{2},{M|x|}/{2}]^{d-1}$ under this rotation, respectively. For $y=(y^1,\dots,y^d)\in\mathbb{R}^d$, we let
\begin{equation}\label{sw-2}
	\widetilde{B}(y,r)=\left\{\left(y^1+rz^1,\dots,y^d+rz^d \right)\mathrel{:}(z^1,\dots,z^d)\in B\right\},
\end{equation}
be the tilted cube of side-length $rM|x|$ centered at $y$, and we write $B_i$ for $\widetilde{B}(b_i,1)$. For $i=0,1,\dots,2N_M$ and $a,b\in\mathbb{R}$ with $a<b$, also let
\[Q(y,r)=\left\{(y^1+rz^1,\dots,y^d+rz^d)\mathrel{:}(z^1,\dots,z^d)\in Q\right\},\]
\[B_i[a,b]=\left\{\left(z^1+sMx^1,\dots,z^d+sMx^d\right)\mathrel{:}s\in [a,b],\ (z^1,\dots,z^d)\in Q_i(0)\right\},\]
where
\[Q_i(b)=Q\left(\left((i-\frac{1}{2}+b)M|x|,\dots,(i-\frac{1}{2}+b)M|x|\right),1\right).\]
We also set
\[\widetilde{Q}_i(b)=Q\left(\left((i-\frac{1}{2}+b)M|x|,\dots,(i-\frac{1}{2}+b)M|x|\right),\frac{1}{2}\right),\]
and note that, by definition, $\widetilde{Q}_i(b)\subseteq Q_i(b)$ for all $i\ge 0$ and $b\in\mathbb{R}$. Observe that every $\widetilde{Q}_i(b)$ is perpendicular to the line through the origin and $x$, and that $Q_i\coloneqq Q_i(0)$ is the ``left face'' of the cube $B_i=B_i[0,1]$. Finally, we write
\begin{equation}\label{sw-4}
	\widetilde{Q}_i\coloneqq\widetilde{Q}_i(0),\qquad i=1,2,\dots, 2N_M+1,
\end{equation}
and set $Q_0=\widetilde{Q}_0=\{0\}$ for convenience. See Figure \ref{sw-fig-1} for a graphical representation of the situation.

\begin{figure}[t]
		\centering
		\includegraphics[width=0.9\textwidth]{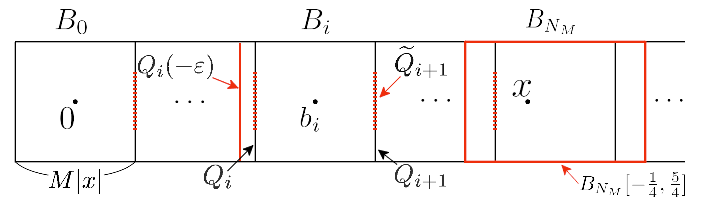}
		\caption{Illustration of $B_i$, $Q_i$ and $\widetilde{Q}_i$ for a given $x$.}\label{sw-fig-1}
\end{figure}

In this section, it will be convenient to consider $S$ (recall that $L=\mathrm{LE}(S[0,\infty))$) as a continuous curve in $\mathbb{R}^d$ by linear interpolating between discrete time points, and thus we may assume that $S(k)$ is defined for all non-negative real $k$. If $\lambda$ is a continuous path in $\mathbb{R}^d$ and $A\subseteq \mathbb{R}^d$, we write
\[\tau^\lambda (A)=\inf\left\{t\ge 0\::\:\lambda(t)\in A \right\},\]
and also, for $x\in\mathbb{R}^d$, we set $\tau^\lambda_x=\tau^\lambda(\{x\})$, analogous to the notation of first hitting times for discrete paths (\ref{hitting}).

In order to obtain the lower bound (\ref{sw-0}), we consider events under which the LERW $L$, started at the origin, travels through the ``tube'' $\bigcup_{i=0}^{N_M} B_i$ until it hits $x$. See Figure \ref{sw-fig-4} for a graphical representation.

\begin{dfn}\label{sw-def-1}
We define the events $F_i$, $i=0,1,\dots,2N_M$, as follows. Firstly,
\[F_0=\left\{\begin{array}{c}
\tau^S(Q_1)<\infty,\: S(\tau^S(Q_1))\in \widetilde{Q}_1,\: S[0,\tau^S(Q_1)]\subset B_0,\\
	S[\tau^S(Q_1(-\varepsilon)),\tau^S(Q_1)]\cap Q_1(-2\varepsilon)=\emptyset
\end{array}\right\}.\]
For $i=1,2,\dots, N_M-1$,
\[F_i=\left\{\begin{array}{c}
\tau^S(Q_i)<\tau^S(Q_{i+1})<\infty,\: S(\tau^S(Q_{i+1}))\in \widetilde{Q}_{i+1},\: S[\tau^S(Q_i),\tau^S(Q_{i+1})]\subset B_{i}[-\varepsilon,1],\\
S[\tau^S(Q_{i+1}(-\varepsilon)),\tau^S(Q_{i+1})]\cap Q_{i+1}(-2\varepsilon)=\emptyset
\end{array}\right\}.\]
Moreover,
\[	F_{N_M}=\left\{\begin{array}{c}
\tau^S(Q_{N_M})< \tau^S_x<\tau^S\left(Q_{N_M+1}\left(\frac{1}{4}\right)\right)<\infty,\\
S\left(\tau^S\left(Q_{N_M+1}\left(\frac{1}{4}\right)\right)\right)\in\widetilde{Q}_{N_M+1}\left(\frac{1}{4}\right),\: S[\tau^S(Q_{N_M}),\tau^S(Q_{N_M+1})]\subset B_{N_M}\left[-\frac{1}{4},\frac{5}{4}\right],\\
S\left[\tau^S_x,\tau^S\left(Q_{N_M+1}\left(\frac{1}{4}\right)\right)\right]\cap\mathrm{LE}(S[\tau^S(Q_{N_M}),\tau^S_x])=\emptyset
\end{array}\right\},\]
\[F_{N_M+1}=\left\{\begin{array}{c}
\tau^S\left(Q_{N_M+1}\left(\frac{1}{4}\right)\right)<\tau^S(Q_{N_M+2}),\:\tau^S(Q_{N_M+2})\in\widetilde{Q}_{N_M+2},\\
S\left[\tau^S\left(Q_{N_M+1}\left(\frac{1}{4}\right)\right),\tau^S(Q_{N_M+2})\right]\subset B_{N_M+1}\left[\frac{1}{4}-\varepsilon,1\right]
\end{array}\right\},\]
and, for $i=N_M+1,\dots,2N_M$,
\[F_i=\left\{\begin{array}{c}
\tau^S(Q_i)<\tau^S(Q_{i+1})<\infty,\: S(\tau^S(Q_{i+1}))\in \widetilde{Q}_{i+1},\\
S[\tau^S(Q_i),\tau^S(Q_{i+1})]\subset B_{i}[-\varepsilon,1]
\end{array}\right\}.\]
\end{dfn}

\begin{figure}[t]
\centering
\includegraphics[width=0.9\textwidth]{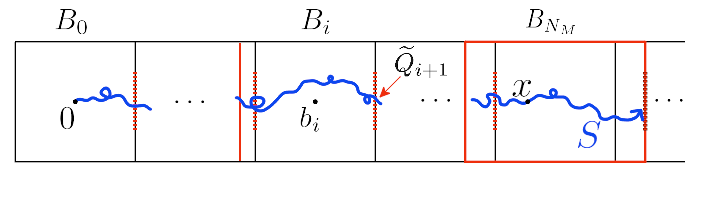}
\vspace{-15pt}
\caption{Illustration of the events $F_i$, $i=0,1,\dots, N_M$.}\label{sw-fig-4}
\end{figure}

The first three conditions of the definition of each $F_i$, $i=0,1,\dots,2N_M$, require that $S$ travels inside the ``tube'' and it exits each $B_i$ at a point which is not close to $\partial Q_{i+1}$. Furthermore, the last condition in the definition of $F_0$, the last two conditions in that of each $F_i$, $i=1,2,\dots,N_M-1$, and the third condition in that of $F_{N_M}$ control the range of backtracking of $S$. Finally, the last condition in the definition of $F_{N_M}$ and events $F_i$, $i=N_M+1,\dots,2N_M$ guarantee that $x$ remains in $\mathrm{LE}(S[0,\tau^S(Q_{2N_M+1})])$.

Next, we define events that provide upper and lower bounds for the length of the loop-erasure of $S$ in each $B_i$. For $i\in\{0,1,\dots,N_M-1\}$, let
\begin{equation}\label{sw-def-xi}
	\xi_i=\mathrm{LE}\left(S[0,\tau^S(Q_{i+1})]\right),
\end{equation}
and also set $\xi_{N_M}=\mathrm{LE}(S[0,\tau^S_x])$. We further define
\begin{align}
\lambda_i&=\mathrm{LE}\left(S[\tau^S(Q_i),\tau^S(Q_{i+1})]\right),\qquad 1\le i\le N_M-1,	\notag	\\
\lambda_{N_M}&=\mathrm{LE}\left(S[\tau^S(Q_{N_M}),\tau^S_x]\right),	\notag	\\
\xi'_0&=\xi_0,\notag\\
\xi'_i&=\xi_0\oplus \lambda_1\oplus \cdots \oplus \lambda_i,\qquad i\ge 1.	\label{sw-def-lambda}
\end{align}
Since $\xi_i$ is not necessarily a simple curve, $\xi_i$ and $\xi'_i$ do not coincide in general. However, the restriction on the backtracking of $S$ on the events $F_i$ and a cut time argument (see Definition \ref{sw-def-4} and Definition \ref{sw-def-5} below) enable us to handle the difference between $\xi_i$ and $\xi'_i$. This will be discussed later, in the proof of Proposition \ref{sw-prop}.

We now define events upon which the length of $\lambda_i$ is bounded above.

\begin{dfn}\label{sw-def-3}
For $C>0$, the event $G_0(C)$ is given by
\[G_0(C)=\left\{\mathrm{len}(\xi_0)\le CM^2|x|^2\right\},\]
and for $i=1,2,\dots,N_M$, the event $G_i(C)$ is given by
\[G_i(C)=\left\{\mathrm{len}(\lambda_i)\le CM^2|x|^2\right\}.\]
\end{dfn}

In the following lemma, we demonstrate that $G_i$ occurs with high conditional probability. Recall that $\widetilde{Q}_i$ was defined at (\ref{sw-4}).

\begin{lem}\label{sw-lem-2}
For any $\delta>0$, there exists a constant $C_+>0$ such that
\begin{equation}\label{sw-lem2-1}
\mathbf{P}^z\left(G_i(C_+)\:\vline\:F_i\right)\ge 1-\delta,
\end{equation}
uniformly in $x\in\mathbb{Z}^d\backslash\{0\}$, $|x|^{-1}\le M<1$, $i\in \{0,1,\dots,N_M\}$ and $z\in \widetilde{Q}_i$.
\end{lem}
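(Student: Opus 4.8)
The estimate to establish is a uniform lower bound on $\mathbf{P}^z(G_i(C_+)\mid F_i)$, i.e.\ that conditional on $F_i$ — which forces $S$ to traverse a box of side-length roughly $M|x|$ — the loop-erasure of the corresponding piece of $S$ has length at most $C_+M^2|x|^2$. The natural strategy is to compare with the unconditioned length of the loop-erasure of a random walk crossing a box of diameter $\asymp M|x|$, for which standard high-dimensional LERW estimates give that $\mathrm{len}(\mathrm{LE})$ is of order $(M|x|)^2$ with exponential (or at least polynomial-in-$C$) tails above that scale. Since $d\geq 5$, the expected length of LERW in a box of side $n$ is $\Theta(n^2)$, and one has a bound of the form $\mathbf{P}(\mathrm{len}(\mathrm{LE}(S[0,\tau]))\geq C n^2)\leq e^{-cC}$ for $S$ crossing an $n$-box (this is exactly the type of input used via \cite[Corollary 3.10]{BJ} and \cite{BJ}-style arguments already invoked in the proof of Lemma~\ref{ds-lem-2}). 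I would quote such a statement.

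The main structural point is to handle the conditioning on $F_i$. The events $F_i$ are defined in terms of $S$ only through the time $S$ spends in (a slight enlargement of) the box $B_i$, together with a no-backtracking constraint near the exit face $Q_{i+1}$; after the exit time $\tau^S(Q_{i+1})$ the event $F_i$ imposes no further constraint. So I would first use the strong Markov property at $\tau^S(Q_i)$ (or at time $0$ for $i=0$) to reduce to a statement about a random walk started on the entry face, killed on exiting $B_i[-\varepsilon,1]$, conditioned on the $F_i$-event. The length $\mathrm{len}(\lambda_i)$ is a function of $S$ restricted to the excursion across $B_i$, and the conditioning event $F_i$ is also measurable with respect to (essentially) that same excursion, so there is no issue of the conditioning depending on the future. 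Then I would bound the conditional probability crudely: $\mathbf{P}(\mathrm{len}(\lambda_i)>C_+M^2|x|^2\mid F_i)\leq \mathbf{P}(\mathrm{len}(\lambda_i)>C_+M^2|x|^2)/\mathbf{P}(F_i\mid \text{entry data})$. For this to work I need a lower bound on $\mathbf{P}(F_i)$ that is bounded below by a constant uniformly in the relevant parameters — this is a geometric "the walk crosses a box of bounded aspect ratio in the right way" estimate, provable by a standard gambler's-ruin / box-crossing argument using \eqref{gambler} and \eqref{srwbound}, and it is the kind of estimate the subsequent Lemma~\ref{sw-lem-1} presumably supplies or that can be extracted from the definition of $F_i$; uniformity comes from the fact that all the boxes $B_i$ have the same shape (side-length $\asymp M|x|$) and $M|x|\geq (4\sqrt d)^{-1}$.

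With these two ingredients, the argument is: pick $C_+$ large enough (depending on $\delta$ and on the constant $c$ in the LERW length tail and the constant lower bound on $\mathbf{P}(F_i)$) so that $e^{-cC_+}/\mathbf{P}(F_i)\leq\delta$; for $i=N_M$ one must also account for the slightly larger box $B_{N_M}[-1/4,5/4]$ and the fact that $\lambda_{N_M}=\mathrm{LE}(S[\tau^S(Q_{N_M}),\tau^S_x])$ runs only up to the hitting time of $x$ rather than an exit time, but since stopping earlier only shortens the loop-erasure, the same upper tail applies. I expect the main obstacle to be making the reduction to an unconditioned LERW-length estimate fully rigorous — precisely, checking that $F_i$ and $\{\mathrm{len}(\lambda_i)>C_+M^2|x|^2\}$ are both measurable with respect to the excursion of $S$ across $B_i[-\varepsilon,1]$ (so that the conditioning is innocuous) and then quoting a clean enough version of the exponential tail bound for the length of LERW restricted to a box; the rest is bookkeeping about box sizes and the uniformity in $M,x,i,z$.
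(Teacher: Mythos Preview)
Your plan works for $i\in\{0,1,\dots,N_M-1\}$, and in that range it is close in spirit to the paper's proof: the paper also uses the crude ratio $\mathbf{P}^z(\cdot)/\mathbf{P}^z(F_i)$ together with the uniform lower bound $\mathbf{P}^z(F_i)\ge c\varepsilon$ from the gambler's-ruin estimate. The difference is that the paper does not invoke an exponential tail for the LERW length; it simply bounds the \emph{conditional first moment} $\mathbf{E}^z(\mathrm{len}(\lambda_i)\mid F_i)$ by summing $\mathbf{P}^z(y\in\lambda_i\mid F_i)\le C(|y-z|^{2-d}\wedge1)$ over $y\in B_i[-\varepsilon,1]$, obtaining $\le CM^2|x|^2$, and then applies Markov's inequality. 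Either route gives the result for these $i$.

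The genuine gap is at $i=N_M$. There your assertion that ``$\mathbf{P}(F_i)$ is bounded below by a constant uniformly'' is false: $F_{N_M}$ requires the walk to hit the single point $x$ inside the last box, so $\mathbf{P}^z(F_{N_M})\asymp(M|x|)^{2-d}$, which is \emph{not} uniformly bounded away from zero as $M|x|\to\infty$. Hence the ratio $\mathbf{P}^z(\mathrm{len}(\lambda_{N_M})>C_+M^2|x|^2)/\mathbf{P}^z(F_{N_M})$ blows up unless you also show the numerator carries a matching factor $(M|x|)^{2-d}$. Your proposed fix, ``stopping earlier only shortens the loop-erasure'', is also incorrect: loop-erasure length is not monotone in the underlying path (extending a path can create a loop that erases earlier segments, and conversely stopping before a loop closes can leave the erasure longer). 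What the paper does instead for $i=N_M$ is bound the conditional first moment directly: it estimates $\mathbf{P}^z(\tau^S_y<\tau^S_x<\tau^S(\text{exit}))$ for each $y$ in the box, splitting into three regions according to whether $y$ is near $z$, near $x$, or far from both, and in each case the factor $(M|x|)^{2-d}$ from hitting $x$ appears in the numerator and cancels against $\mathbf{P}^z(F_{N_M})$, leaving $\mathbf{E}^z(\mathrm{len}(\lambda_{N_M})\mid F_{N_M})\le CM^2|x|^2$. That cancellation is the step your outline is missing.
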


\begin{proof}
For $i=0,1,\dots,N_M-1$, $y\in B_i[-\varepsilon,1]$ and $z\in \widetilde{Q}_i$, we have that
\[\mathbf{P}^z\left(y\in \lambda_i\mid F_i\right)\leq\mathbf{P}^{z}(\tau^S_y<\tau^S(Q_{i+1})\mid F_i).\]
Moreover, by \eqref{gambler} and translation invariance, we have that there exists some constant $c>0$ such that
\begin{equation}\label{sw-lem2-1.1}
\inf_{z\in \widetilde{Q}_i}\mathbf{P}^z(F_i)\ge c\varepsilon,
\end{equation}
for all $i=1,2,\dots,N_M-1$. For $i=0$, the same argument yields that $\mathbf{P}(F_0)\ge c\varepsilon$. Thus, it follows from (\ref{srwbound2}) and (\ref{sw-lem2-1.1}) that
\[\mathbf{P}^{z}(\tau^S_y<\tau^S(Q_{i+1})\mid F_i)\le \frac{\mathbf{P}^{z}(\tau^S_y<\tau^S(Q_{i+1}))}{\mathbf{P}^z(F_i)}	\le C\mathbf{P}^z(\tau^S_y<\infty)\le C\left(|y-z|^{2-d}\wedge 1\right),\]
for some constant $C>0$. By taking the sum over $y\in B_i[-\varepsilon,1]$, we have that
\begin{equation}\label{sw-lem2-2}
\mathbf{E}^z(\mathrm{len}(\lambda_i)\mid F_i)=\sum_{y\in B_i[-\varepsilon,1]}\mathbf{P}^z(y\in \lambda_i\mid F_i)\le C\sum_{y\in B_i[-\varepsilon,1]}\left(|y-z|^{2-d}\wedge 1\right)\le CM^2|x|^2.\end{equation}
The same argument also applies to the case $i=0$, and thus we have
\begin{equation}\label{sw-lem2-2.3}
	\mathbf{E}^0(\mathrm{len}(\xi_0)\mid F_0)\le CM^2|x|^2.
\end{equation}
Similarly, for the case $i=N_M$, recalling that $\lambda_{N_M}$ was defined at (\ref{sw-def-lambda}), we have that
\begin{align}
{\mathbf{P}^{z}\left(y\in \lambda_{N_M}\mid F_{N_M}\right)}	&\le \mathbf{P}^z\left(\tau^S_y<\tau^S_x\mid F_{N_M}\right)	\notag	\\
	&\le\frac{\mathbf{P}^z\left(\tau^S_y<\tau^S_x<\tau^S(B_{N_M}[-\frac{1}{4},\frac{5}{4}]^c)\right)}{\mathbf{P}^z\left(F_{N_M}\right)}	\notag\\
	&=\frac{\mathbf{P}^z\left(\tau^S_y<\tau^S_x\wedge \tau^S(B_{N_M}[-\frac{1}{4},\frac{5}{4}]^c)\right)\mathbf{P}^y\left(\tau^S_x<\tau^S(B_{N_M}[-\frac{1}{4},\frac{5}{4}]^c)\right)}{\mathbf{P}^z\left(F_{N_M}\right)},	\label{sw-lem2-3}
\end{align}
where we used the strong Markov inequality to obtain the last inequality. We will prove that $\mathbf{P}^z(F_{N_M})\ge C'(M|x|)^{2-d}$ later in this subsection, see (\ref{sw-prop1-6}).

Now we bound above the sum of the numerator of (\ref{sw-lem2-3}) over $y\in B_{N_M}[-\frac{1}{4},\frac{5}{4}]$, separating into three cases depending on the location of $y$.
\begin{enumerate}
\item[(i)] For $y\in B(z,\frac{1}{18}M|x|)$, it follows from (\ref{srwbound2}) that
\begin{align*}
\mathbf{P}^z\left(\tau^S_y<\tau^S\left(B_{N_M}\left[-\frac{1}{4},\frac{5}{4}\right]^c\right)\wedge\tau^S_x\right)&=C(|y-z|^{2-d}-(M|x|/2)^{2-d})+O(|y-z|^{1-d}),	\\
	\mathbf{P}^y\left(\tau^S_x<\tau^S\left(B_{N_M}\left[-\frac{1}{4},\frac{5}{4}\right]^c\right)\right)&\le C(M|x|)^{2-d}.
\end{align*}
Thus we have that
\begin{align}
	\sum_{y\in B(z,\frac{1}{18}M|x|)} &\mathbf{P}^z\left(\tau^S_y<\tau^S_x\wedge\tau^S\left(B_{N_M}\left[-\frac{1}{4},\frac{5}{4}\right]^c\right)\right)\mathbf{P}^y\left(\tau^S_x<\tau^S\left(B_{N_M}\left[-\frac{1}{4},\frac{5}{4}\right]^c\right)\right)	\notag	\\
	&\le C(M|x|)^{2-d}\sum_{k=1}^{\frac{1}{18}M|x|}\sum_{|y-z|=k}(k^{2-d}-(M|x|/2)^{2-d}+O(k^{1-d}))	\notag	\\
	&\le C(M|x|)^{4-d}.	\notag
\end{align}
\item[(ii)] For $y\in B(x,\frac{1}{18}M|x|)$, a similar argument to case (i) yields that
\begin{align*}
	\sum_{y\in B(x,\frac{1}{18}M|x|)} &\mathbf{P}^z\left(\tau^S_y<\tau^S_x\wedge \tau^S\left(B_{N_M}\left[-\frac{1}{4},\frac{5}{4}\right]^c\right)\right)\mathbf{P}^y\left(\tau^S_x<\tau^S\left(B_{N_M}\left[-\frac{1}{4},\frac{5}{4}\right]^c\right)\right)	\\
	&\le C(M|x|)^{4-d}.
\end{align*}
\item[(iii)] For $y\in B_{N_M}[-\frac{1}{4},\frac{5}{4}]\setminus(B(z,\frac{1}{18}M|x|)\cup B(x,\frac{1}{18}M|x|))$, we have that
\begin{align*}
	\mathbf{P}^z\left(\tau^S_y<\tau^S_x<\tau^S\left(B_{N_M}\left[-\frac{1}{4},\frac{5}{4}\right]^c\right)\right)&\le \mathbf{P}^z(\tau^S_y<\infty)\le C(M|x|)^{2-d},	\\
	\mathbf{P}^y\left(\tau^S_x<\tau^S\left(B_{N_M}\left[-\frac{1}{4},\frac{5}{4}\right]^c\right)\right)&\le \mathbf{P}^y(\tau^S_x<\infty)\le C(M|x|)^{2-d},
\end{align*}
which gives
\begin{align}
	\sum_{\substack{y\in B_{N_M}[-\frac{1}{4},\frac{5}{4}]\\|y-z|,|y-x|\ge \frac{1}{18}M|x|}}&\mathbf{P}^z\left(\tau^S_y<\tau^S_x<\tau^S\left(B_{N_M}\left[-\frac{1}{4},\frac{5}{4}\right]^c\right)\right)\mathbf{P}^y\left(\tau^S_x<\tau^S\left(B_{N_M}\left[-\frac{1}{4},\frac{5}{4}\right]^c\right)\right)	\notag	\\
	&\le C\sum_{\substack{y\in B_{N_M}[-\frac{1}{4},\frac{5}{4}]\\|y-z|,|y-x|\ge \frac{1}{18}M|x|}}(M|x|)^{4-2d}\le C(M|x|)^{4-d}.	\notag
\end{align}
\end{enumerate}
Thus, by (\ref{sw-lem2-3}), it holds that
\begin{equation}\label{sw-lem2-4}
\mathbf{E}^z(\mathrm{len}(\lambda_{N_M})\mid F_{N_M})=\sum_{y\in B_{N_M}[-\frac{1}{4},\frac{5}{4}]}\mathbf{P}^z(y\in \lambda_{N_M}\mid F_{N_M})	\le \frac{C(M|x|)^{4-d}}{c'\cdot C'(M|x|)^{2-d}} \le CM^2|x|^2.
\end{equation}
Combining (\ref{sw-lem2-2}), (\ref{sw-lem2-2.3}) and (\ref{sw-lem2-4}) with Markov's inequality, it holds that
\[	\mathbf{P}^z(\mathrm{len}(\lambda_i)\ge C_+M^2|x|^2\mid F_i)\le C/C_+,\]
for $i=0,1,\dots,N_M$. By taking $C_+=\delta^{-1}C$, we obtain (\ref{sw-lem2-1}).
\end{proof}

Now we will deal with events involving $S$ upon which the length of $\lambda_i$ is bounded below and, at the same time, the gap between the lengths of $\xi_i$ and $\xi'_i$ is bounded above. First, we define a special type of cut time of $S$ in each $B_i$.

\begin{dfn}\label{sw-def-4}
A nice cut time in $B_i$ is a time $k$ satisfying the following conditions:
\begin{itemize}
	\item $k\in\left[\tau^S(Q_i(\varepsilon/3)),\tau^S(Q_i(\varepsilon))\right]$,
	\item $S(k)\in B(S(\tau^S(Q_i)),\varepsilon M|x|/2)$,
	\item $S[\tau^S(Q_i),k]\cap S\left[k+1,\tau^S(Q_i(\varepsilon))\right]=\emptyset$,
	\item $S[k+1,\tau^S(Q_i(\varepsilon))]\cap Q_i=\emptyset$.
\end{itemize}
\end{dfn}

Second, let $B'_i$ (respectively $B''_i$) be the cube of side-length $M|x|/3$ (respectively $M|x|/9$) centered at $b_i$ whose faces are parallel to those of $B_i$, i.e.
\[B'_i=\widetilde{B}\left(b_i,\frac{1}{3}\right),\qquad B''_i=\widetilde{B}\left(b_i,\frac{1}{9}\right),\]
where $b_i$ and $\widetilde{B}(y,r)$ are as defined in (\ref{sw-1}) and (\ref{sw-2}), respectively. We denote by $Q_i^L$ (respectively $Q_i^R$) the ``left (respectively right) face'' of $B'_i$. See Figure \ref{sw-fig-2}.

Let $\rho_i=\inf\{n\ge \tau^S(B''_i):\: S(k)\in (B'_i)^c\}$ be the first time that $S$ exits $B'_i$ after it first entered $B''_i$. We define a set of local cut times of $S$ by
\[K_i=\left\{\tau^S(B''_i)\le k\le \rho_i \mathrel{:}S(k)\in B''_i,\: S[\tau^S(B'_i),k]\cap S[k+1,\rho_i]=\emptyset\right\}.\]

Finally, we define events $H_i^{(j)}~(j=1,2,3,4)$ as follows.

\begin{figure}[t]
\centering
\includegraphics[width=0.45\textwidth]{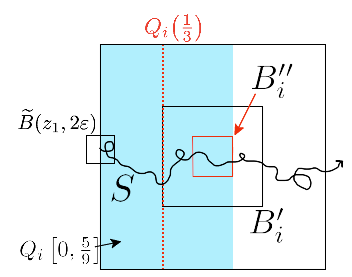}
\caption{Illustration of $B'_i$ and $B''_i$.}\label{sw-fig-2}
\end{figure}

\begin{dfn}\label{sw-def-5} For $1\le i\le N_M-1$ and $l>0$,
\begin{align}
	H_i^{(1)}&=\left\{S \mbox{ has a nice cut time in $B_i$}\right\}	\cap\left\{0<\tau^S(Q_i(\varepsilon))-\tau^S(Q_i)\le C\varepsilon^2 M^2|x|^2\right\},	\notag\\
	H_i^{(2)}&=\left\{\begin{array}{c}
\tau^S(B'_i)<\tau^S(Q_{i+1}),\:S(\tau^S(Q_i(1/3)))\in Q_i^L,\\
		S[\tau^S(Q_i(\varepsilon)),\:\tau^S(Q_i(1/3))]\cap Q_i(\varepsilon/2)=\emptyset
\end{array}\right\},\notag\\
	H_i^{(3)}(l)&=\left\{\# K_i\ge lM^2|x|^2,\: S(\rho_i)\in Q_i^R,\: S[\tau^S(B'_i),k]\in Q_i[0,5/9]\ \mbox{for all}\ k\in K_i\right\},	\notag\\
	H_i^{(4)}&=\left\{S[\rho_i,\tau^S(Q_{i+1})]\cap Q_{i}[0,11/18]=\emptyset\right\},\notag
\end{align}
where $\# A$ denotes the cardinality of set $A$. Moreover, $H_i(l)=H_i^{(1)}\cap H_i^{(2)}\cap H_i^{(3)}(l)\cap H_i^{(4)}$.
\end{dfn}

Note that, on the event $H_i(l)$, a local cut time $k\in K_i$ satisfies
\[S[\tau^S(Q_i),k]\cap S[k+1,\tau^S(Q_{i+1})]=\emptyset,\]
and thus $\mathrm{len}(\lambda_i)\ge \# K_i$ holds.

\begin{lem}\label{sw-lem-1}
Let $H_i(l)$ be as defined above. Then there exists constants $c>0$, $\varepsilon>0$, $l>0$ and $R'>0$ such that
\begin{equation}\label{sw-lem1-1}
\mathbf{P}^z\left(H_i(l)\mid F_i\right)\ge c,
\end{equation}
uniformly in $x$ and $M$ with $M|x|>R'$, $i\in\{1,2,\dots,N_M-1\}$ and $z\in\widetilde{Q}_i$.
\end{lem}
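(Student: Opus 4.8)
The plan is to establish the lower bound \eqref{sw-lem1-1} by decomposing the event $H_i(l)$ into its four constituent parts $H_i^{(1)}, H_i^{(2)}, H_i^{(3)}(l), H_i^{(4)}$ and estimating each conditional probability in turn, conditionally on $F_i$ and, where convenient, on the natural intermediate stopping times. The basic strategy is that each of these events asks the simple random walk $S$, over a time window whose length is comparable to $(M|x|)^2$ and inside a box of side-length comparable to $M|x|$, to do something which happens with probability bounded below by a universal constant once $M|x|$ is large enough (this is why the hypothesis $M|x|>R'$ appears): the walk must cross from one face of a box to another, avoid certain thin slabs on the way, and produce a macroscopic number of cut points. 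All of these are standard properties of a random walk run for a time of order (diameter)$^2$, and after rescaling by $M|x|$ they are uniform in $x$, $M$ and $i$.

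First I would handle $H_i^{(1)}$. The second part, $\{0<\tau^S(Q_i(\varepsilon))-\tau^S(Q_i)\le C\varepsilon^2M^2|x|^2\}$, is a time-control estimate: conditionally on $F_i$, the walk enters $B_i$ through $Q_i$ and must travel a distance of order $\varepsilon M|x|$ along the tube to reach $Q_i(\varepsilon)$, and for a suitable constant $C$ the probability that this takes more than $C\varepsilon^2 M^2|x|^2$ steps is small by a standard exit-time/exponential-tail bound (e.g.\ via \eqref{gambler} applied to the coordinate along the tube, or Azuma's inequality on the displacement). The ``nice cut time'' requirement is a cut-point existence statement localised near $S(\tau^S(Q_i))$: over the first $O(\varepsilon^2 M^2|x|^2)$ steps after entering $B_i$, the walk produces, with probability bounded below, a time $k$ at which its past and future (up to $\tau^S(Q_i(\varepsilon))$) are disjoint, it has not drifted more than $\varepsilon M|x|/2$, and it has not returned to the face $Q_i$. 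In $d\ge 5$ cut points of the walk have positive density in time (cf.\ the cut-time estimates in \cite{Lawb,LawLim}), so after rescaling this again gives a uniform constant lower bound. A short argument is needed to see that conditioning on $F_i$ — which constrains only the walk's behaviour on a longer time scale and in regions away from the relevant window — does not destroy this, and here I would use the FKG/Harris-type structure or simply bound $\mathbf{P}^z(\,\cdot\,\cap F_i)\ge \mathbf{P}^z(\,\cdot\,)\,\mathbf{P}(F_i\mid \text{endpoint after the window})$ together with the lower bound $\mathbf{P}^z(F_i)\ge c\varepsilon$ from \eqref{sw-lem2-1.1}.

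Next, $H_i^{(2)}$ asks the walk, starting from where it sits after $\tau^S(Q_i(\varepsilon))$, to reach the smaller box $B_i'$ (equivalently its left face $Q_i^L$, via $\tau^S(Q_i(1/3))$) before leaving towards $Q_{i+1}$, while avoiding the slab $Q_i(\varepsilon/2)$; $H_i^{(3)}(l)$ then asks that during the excursion in $B_i'$ between first hitting $B_i''$ and exiting $B_i'$ it accumulates at least $lM^2|x|^2$ local cut times, all lying in a controlled sub-region, and exits through the right face $Q_i^R$; finally $H_i^{(4)}$ asks that after $\rho_i$ the walk proceeds to $Q_{i+1}$ without re-entering the slab $Q_i[0,11/18]$. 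Each of these is, after dividing all lengths by $M|x|$, a fixed-geometry event for Brownian motion (or for the random walk run for a time of order (scale)$^2$), and so has probability bounded below by a universal constant once $M|x|>R'$; the cut-time count in $H_i^{(3)}(l)$ is again handled by the linear-in-time growth of the number of cut points for transient SRW, giving an $l>0$ for which $\#K_i\ge lM^2|x|^2$ with probability $\ge 1/2$ (say), and then by the strong Markov property and the independence of these successive stages one multiplies the constants together. Throughout, all conditioning is unwound by the strong Markov property at the relevant stopping times $\tau^S(Q_i)$, $\tau^S(Q_i(\varepsilon))$, $\tau^S(B_i')$, $\rho_i$, $\tau^S(Q_{i+1})$, using that $F_i$ factorises along these times.

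The main obstacle I expect is the interaction between the conditioning on $F_i$ and the cut-time requirements in $H_i^{(1)}$ and $H_i^{(3)}(l)$: one must verify carefully that the constraints imposed by $F_i$ (staying in the tube, exiting $B_i$ away from $\partial Q_{i+1}$, limited backtracking) do not have vanishing overlap with the cut-point events as $M|x|\to\infty$. The clean way around this is to build the desired trajectory explicitly in stages — cross $Q_i$, produce a nice cut time, walk to $B_i''$, generate local cut times and leave through $Q_i^R$, then walk to $Q_{i+1}$ avoiding the bad slabs — bounding the probability of each stage below by a constant depending only on $d$, $\varepsilon$ and $l$ (for $M|x|$ large), and only at the end dividing by $\mathbf{P}^z(F_i)\ge c\varepsilon$, which is itself bounded below on the same range. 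Since $\varepsilon$, $l$, $c$ and $R'$ are to be chosen, one simply fixes $\varepsilon$ small enough that \eqref{sw-lem2-1.1} and the avoidance estimates hold, then $l$ small enough for the cut-time lower bound, then $R'$ large enough that all the rescaled random-walk estimates are within a constant factor of their Brownian limits, and absorbs everything into a single constant $c>0$.
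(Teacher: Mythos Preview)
Your outline is essentially the paper's proof: factor through the strong Markov property at the successive stopping times, bound each $H_i^{(j)}$ from below by a universal constant (cut-time existence for $H_i^{(1)}$, gambler's-ruin/geometry for $H_i^{(2)}$ and $H_i^{(4)}$), and divide out by $\mathbf{P}^z(F_i)\ge c\varepsilon$. The one place the paper does something more specific than you indicate is $H_i^{(3)}(l)$: since the cut times in $K_i$ must lie in $B_i''$ \emph{and} the walk must exit $B_i'$ through $Q_i^R$ with a path constraint, a bare appeal to ``linear-in-time growth of cut points'' does not immediately apply; the paper instead runs a second-moment (Paley--Zygmund) argument, bounding $\mathbf{E}^{z_3}[\#K_i\,\mathbf{1}_A]$ from below via an explicit two-sided random-walk construction and $\mathbf{E}^{z_3}[(\#K_i)^2]$ from above by Green's-function estimates.
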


\begin{proof}
By the strong Markov property, we have that
\begin{align}
\lefteqn{\mathbf{P}^z\left(H_i(l)\mid F_i\right)}\nonumber\\
&\ge \inf_{z_1,z_2,z_3,z_4}\frac{1}{\mathbf{P}^z(F_i)}\prod_{j=1}^3\mathbf{P}^{z_j}\left(H_i^{(j)}\cap \{S[\tau^S(z_j),\tau^S(z_{j+1})]\subseteq B_i[-\varepsilon,1]\}\right)	\notag	\\
	&\qquad \times\mathbf{P}^{z_4}\left(H_i^{(4)}\cap\left\{\begin{array}{c}S[\tau^S(z_3),\tau^S(z_4)]\subseteq B_i[-\varepsilon,1],\: S(\tau^S(Q_{i+1}))\in \widetilde{Q}_{i+1},\\		
S[\tau^S(Q_{i+1}(-\varepsilon)),\:\tau^S(Q_{i+1})]\cap Q_{i+1}(-2\varepsilon)\end{array}\right\}\right)	\notag	\\
	&\ge \inf_{z_1}\mathbf{P}^{z_1}(H_i^{(1)})\inf_{z_2}\mathbf{P}^{z_2}(H_i^{(2)}\mid \{S[\tau^S(z_2),\tau^S(z_3)]\subseteq B_i[-\varepsilon,1]\})\inf_{z_3}\mathbf{P}^{z_3}(H_i^{(3)}(l))	\notag	\\
	&\qquad\times\inf_{z_4}\mathbf{P}^{z_4}\left(H_i^{(4)}\:\vline\:\begin{array}{c}
S[\tau^S(z_3),\tau^S(z_4)]\subseteq B_i[-\varepsilon,1],\:S(\tau^S(Q_{i+1}))\in \widetilde{Q}_{i+1},\\
S[\tau^S(Q_{i+1}(-\varepsilon)),\:\tau^S(Q_{i+1})]\cap Q_{i+1}(-2\varepsilon)\end{array}\right),\label{sw-lem1-1.2}
\end{align}
where the infima are taken over $z_1\in \widetilde{Q}_i$, $z_2\in \partial\widetilde{B}(z_1,2\varepsilon)$, $z_3\in Q_i^L$ and $z_4\in Q_i^R$ (see (\ref{sw-2}) for the definition of $\widetilde{B}(y,r)$).

First we estimate the conditional probability of $H_i^{(1)}$. Recall that $B(x,r)$ denotes the Euclidean ball of radius $r$ with center point $x$. We consider the event of $S$ up to the first exit time of the small box $\widetilde{B}(z_1,2\varepsilon)$. Let $k_1=\tau^S(B(z,\frac{\varepsilon}{2}M|x|)^c)$ and $k_2=\tau^S(\widetilde{B}(z_1,2\varepsilon)^c)$. Then
\begin{eqnarray}
\mathbf{P}^{z_1}(H_i^{(1)})	&\ge&\mathbf{P}^{z_1}\left(S\mbox{ has a nice cut time }k\in[k_1,k_2],\:0<k_2-\tau^S(Q_i)\le C'\varepsilon^2M^2|x|^2\right)\notag\\
&\ge&\mathbf{P}^{z_1}\left(\begin{array}{c}
S\mbox{ has a nice cut time }k\in[k_1,k_2],\\
\#S[\tau^S(Q_i),k_1]\ge C'^{-1}\varepsilon^2 M^2|x|^2,\:0<k_2-\tau^S(Q_i)\le C'\varepsilon^2 M^2|x|^2
\end{array}\right)\notag\\
&\ge& \mathbf{P}^{z_1}\left(S\mbox{ has a nice cut time }k\in[C'^{-1}\varepsilon^2M^2|x|^2,C'\varepsilon^2M^2|x|^2]\right)	\notag	\\
&& \qquad-\mathbf{P}^{z_1}\left(\#S[\tau^S(Q_i),k_1]\ge C'^{-1}\varepsilon^2 M^2|x|^2\right)\notag	\\
&& \qquad-\mathbf{P}^{z_1}\left(0<k_2-\tau^S(Q_i)\le C'\varepsilon^2 M^2|x|^2\right).	\label{sw-lem1-1.2+}
\end{eqnarray}
If we take $C'>1$ sufficiently large, then the second and third terms on the right-hand side of (\ref{sw-lem1-1.2+}) are bounded below by some small constant, while it follows from \cite[equation (1)]{Law-cut} that the first term is bounded below by some universal constant. Thus, we have
\begin{equation}\label{sw-lem1-1.3}
	\mathbf{P}^{z_1}(H_i^{(1)})\ge c_1
\end{equation}
for some constant $c_1>0$.

Second we consider the conditional probability of $H_i^{(2)}$. By (\ref{srwbound}), there exists some universal constant $C>0$ such that
\[\mathbf{P}^{z_2}\left(S\left[\tau^S(B(z,\varepsilon M|x|/2)^c),\tau^S(B'_i)\right]\cap (B(z,\varepsilon^2M|x|/2))\neq\emptyset\right)\le C\varepsilon^{d-2},\]
for $M|x|\ge \varepsilon^{-d}$. It follows from \eqref{gambler} that
\begin{align*}
c_2\varepsilon &\le \mathbf{P}^{z_2}\left(\tau^S(B'_i)<\tau^S(Q_{i+1}),\ S[\tau^S(z_2),\tau^S(z_3)]\subseteq B_i[-\varepsilon,1]\right)\notag\\
&\le \mathbf{P}^{z_2}\left(S[\tau^S(z_2),\tau^S(z_3)]\subseteq B_i[-\varepsilon,1]\right)\\
&\le c_3\varepsilon,
\end{align*}
uniformly in $z_2\in B(z_1,\varepsilon M|x|/2)$. Thus we have that
\begin{equation}\label{sw-lem1-1.4}
\mathbf{P}^{z_2}\left(H_i^{(2)}\:\vline\: S[\tau^S(z_2),\tau^S(z_3)]\subseteq B_i[-\varepsilon,1]\right)\ge \frac{c_2\varepsilon-C\varepsilon^{d-2}}{c_3\varepsilon}\ge c,
\end{equation}
for some constant $c>0$ and sufficiently small $\varepsilon$.

Again by \eqref{gambler}, it holds that
\begin{align}
\mathbf{P}^{z_4}&\left(H_i^{(4)}\:\vline\:\begin{array}{c}
S[\tau^S(z_4),\tau^S(Q_{i+1})]\subseteq B_i[-\varepsilon,1],\:S(\tau^S(Q_{i+1}))\in \widetilde{Q}_{i+1},\\
S[\tau^S(Q_{i+1}(-\varepsilon)),\tau^S(Q_{i+1})]\cap Q_{i+1}(-2\varepsilon)
\end{array}\right)\ge c	\label{sw-lem1-1.5}
\end{align}
for some constant $c>0$.

We will next derive a lower bound for $\mathbf{P}^{z_3}(H_i^{(3)})$ by applying the second moment method. We consider the ball $\mathcal{B}\coloneqq B\left(y,{M|x|}/{18}\right)$ and two independent simple random walks $R^1$ and $R^2$ with starting point $y$. Let $w_j=R^j(\tau^{R^j}(\mathcal{B}^c))$ for $j=1,2$. We define two events of $R^1$ and $R^2$ as follows:
\begin{align*}
I_1&=\left\{R^1[1,\tau^{R^1}(\mathcal{B}^c)]\cap R^2[1,\tau^{R^2}(\mathcal{B}^c)]=\emptyset\right\},\\
I_2&=\left\{\mathrm{dist}(\{w_1\},R^2[1,\tau^{R^2}(\mathcal{B}^c)])\vee \mathrm{dist}(\{w_2\},R^1[1,\tau^{R^1}(\mathcal{B}^c)])\ge M|x|/36\right\}.
\end{align*}
Let us denote by $P$ the joint distribution of $R^1$ and $R^2$. By \cite[Lemma 3.2]{BJ}, we have
\begin{equation}\label{sw-lem1-2.1}
\mathbf{P}(I_2\mid I_1)\ge c_4,
\end{equation}
while it follows from \cite[equation (3.2)]{Lawb} that $\mathbf{P}(I_1)\ge c_4$ for some constant $c_4>0$. On $I_2$, without loss of generality, we suppose that $|w_1-z_3|\le |w_2-z_3|$. Let
\begin{align*}
J_1=&\left\{\tau^{R^1}_{z_3}<\tau^{R^1}({B'_i}^c),\ \mathrm{dist}(R^1(k),l^1)\le M|x|/200 \mbox{ for all }k\in [\tau^{R^1}(\mathcal{B}^c),\tau^{R^1}_{z_3}]\right\}	\\
J_2=&\left\{R^2(\tau^{R_2}({B_i}^c))\in Q_i^R,\ \mathrm{dist}(R^2(k),l^2)\le M|x|/200 \mbox{ for all }k\in[\tau^{R^2}(\mathcal{B}^c),\tau^{R^2}({B'_i}^c)]\right\},
\end{align*}
where $l^1$ (respectively $l^2$) is the line segment between the points $w_1$ and $z_3$ (or between $w_2$ and $R^2(\tau^{R^2}({B'_i}^c))$, respectively). (See Figure \ref{sw-fig-3} for a depiction of $I_1\cap I_2\cap J_1\cap J_2$.)
\begin{figure}[t]
\centering
\includegraphics[width=0.35\textwidth]{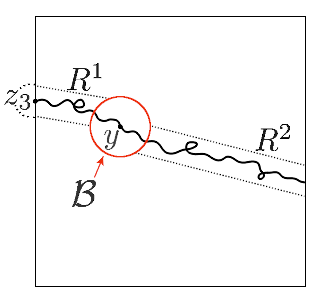}
\caption{Illustration of the event $I_1\cap I_2\cap J_1\cap J_2$.}\label{sw-fig-3}
\end{figure}
Since $\mathrm{dist}(w_2,{B'_i}^c)$ is comparable to $M|x|$, we have that
\[\mathbf{P}(J_2)\ge c',\]
for some constant $c'>0$. Moreover, by the strong Markov property and (\ref{srwbound2}),
\begin{align}
\mathbf{P}(J_1)&\ge \mathbf{P}\left(\mathrm{dist}(R^1(k),l^1)\le M|x|/200 \mbox{ for all }k\in [\tau^{R^1}(\mathcal{B}^c),\tau^{R^1}(B(z_3,M|x|/400))]\right)	\notag	\\
&\:\:\:\:\:\:\times \mathbf{P}^{R^1(\tau^{R^1}(B(z_3,M|x|/400)))}\left(\tau^{R^1}_{z_3}<\tau^{R^1}({B'_i}^c)\right)\times \mathbf{P}^{z_3}\left(R^1(\tau^{R^1}(B(z_3,M|x|/200)))\not\in B'_i\right)\notag	\\
&\ge c|w_1-z_3|^{2-d},\label{sw-lem1-3.2}
\end{align}
for some constant $c>0$. By the strong Markov property, we bound from below the expectation of $\#K_i$ on the event $A\coloneqq\{S(\rho_i)\in Q_i^R, S[\tau^S(B'_i),k]\in Q_i[0,5/9]\ \mbox{for all}\ k\in K_i\}$ by
\begin{align}
\mathbf{E}^{z_3}\left(\# K_i\mathbf{1}_A\right)&\ge \sum_{y\in B''_i} \mathbf{P}\left(I_1\cap I_2\cap J_1\cap J_2\right)\notag\\
&=\sum_{y\in B''_i} \mathbf{P}(I_1)\mathbf{P}(I_2\mid I_1)\mathbf{P}(J_1)\mathbf{P}(J_2)\notag\\
&\ge\sum_{y\in B''_i} c_4^2\cdot c|y-z_3|\cdot c'\notag\\
&\ge cM^2|x|^2.\hspace{240pt}\label{sw-lem1-4}
\end{align}
On the other hand, the first and second moment of $K_i$ is bounded above as follows. Since $|y-z_3|\ge \frac{1}{9}M|x|$ for $y\in B''_i$ and $z\in Q_i^L$,
\begin{align}
\mathbf{E}^{z_3}(\#K_i)&\le \mathbf{E}^{z_3}\left(\sum_{y\in B''_i}\mathbf{1}(\tau^S_y<\infty)\right)	\le \sum_{y\in B''_i}\mathbf{P}^{z_3}\left(\{\tau^S_y<\infty\}\right) \le \sum_{y\in B''_i}C|y-z_3|^{2-d}	\le CM^2 |x|^2,\label{sw-lem1-5}
\end{align}
\begin{align}
\mathbf{E}^{z_3}((\#K_i)^2)&\le \mathbf{E}^{z_3}\left(\left(\sum_{y\in B''_i}\mathbf{1}(\tau^S_y<\infty)\right)^2\right)\notag\\
		&\le CM^2|x|^2+\sum_{y\in B''_i}\sum_{y'\in B''_i}\left(\mathbf{P}^{z_3}(\tau^S_y<\tau^S_{y'}<\infty)+\mathbf{P}^{z_3}(\tau^S_{y'}<\tau^S_y<\infty)\right)\notag\\
		&\le CM^2|x|^2+\sum_{y\in B''_i}\sum_{y\in B''_i}\left(\mathbf{P}^{z_3}(\tau^S_y<\infty)+\mathbf{P}^{z_3}(\tau^S_{y'}<\infty)\right)\mathbf{P}^{y}(\tau^S_{y'}<\infty)	\notag	\\
		&\le CM^2|x|^2+\sum_{y\in B''_i}\sum_{k=1}^{\frac{1}{18}M|x|}\sum_{\substack{y\in B''_i\\|y-y'|=k}}\left(|y-z_3|^{2-d}+|y'-z_3|^{2-d}\right)k^{2-d}	\notag	\\
		&\qquad+\sum_{y\in B''_i}\sum_{k\ge\frac{1}{18}M|x|+1}\sum_{\substack{y\in B''_i\\|y-y'|=k}}\left(|y-z_3|^{2-d}+|y'-z_3|^{2-d}\right)(M|x|/9)^{2-d}	\notag	\\
		&\le C'M^4|x|^4,	\label{sw-lem1-6}
\end{align}
where $C$ and $C'$ depend only on $d$. Now, for $0\le \theta \le 1$, we have that
\begin{align*}
	\mathbf{E}^{z_3}&\left(\# K_i\mathbf{1}_A\right)\le \theta\mathbf{E}^{z_3}\left(\# K_i\mathbf{1}_A\right)+\mathbf{E}^{z_3}\left(\# K_i\mathbf{1}_A\mathbf{1}\left(\# K_i\mathbf{1}_A> \theta\mathbf{E}^{z_3}(\# K_i\mathbf{1}_A)\right)\right).
\end{align*}
From this, since $\# K_i\ge 0$, the Cauchy-Schwarz inequality yields
\begin{align*}
	\mathbf{P}^{z_3}(\{\#K_i> \theta\mathbf{E}^{z_3}(\# K_i\mathbf{1}_A)\}\cap A)&\ge \mathbf{P}^{z_3}(\{\#K_i\mathbf{1}_A> \theta\mathbf{E}^{z_3}(\# K_i\mathbf{1}_A)\}\cap A)	\\
	&\ge (1-\theta)^2 \frac{\mathbf{E}^{z_3}(\# K_i\mathbf{1}_A)^2}{\mathbf{E}^{z_3}((\# K_i)^2)}.
\end{align*}
By substituting (\ref{sw-lem1-4}), (\ref{sw-lem1-5}) and (\ref{sw-lem1-6}) into the above estimate, we obtain that
\begin{equation}\label{sw-lem1-7}
\mathbf{P}^{z_3}(H_i^{(3)}(l))\ge \mathbf{P}^{z_3}\left(\#K_i\ge \frac{l}{C}\mathbf{E}^{z_3}(\#K_i)\right)\ge\left(1-\frac{l}{C}\right)^2\frac{\mathbf{E}^{z_3}(\#K_i\mathbf{1}_A)^2}{\mathbf{E}^{z_3}((\#K_i)^2)}\ge c.
\end{equation}

Finally, substituting (\ref{sw-lem1-1.3}), (\ref{sw-lem1-1.4}), (\ref{sw-lem1-1.5}) and (\ref{sw-lem1-7}) into (\ref{sw-lem1-1.2}) gives (\ref{sw-lem1-1}).
\end{proof}

We are now ready to prove Proposition \ref{sw-prop}. Recall that $F_i$, $G_i$ and $H_i$ were defined in Definitions \ref{sw-def-1}, \ref{sw-def-3} and \ref{sw-def-5}, respectively. Let
\[U_{N_M}=\left\{S[\tau^S(Q_{2N_M+1}),\infty]\cap B(0,N_M\cdot M|x|)=\emptyset\right\},\]
and
\[\Theta=\Theta(C,l)=\left(\bigcap_{i=0}^{2N_M}F_i\right)\cap\left(\bigcap_{i=0}^{N_M}G_i(C)\right)\cap\left(\bigcap_{i=1}^{N_M-1}H_i(l)\right)\cap U_{N_M}.\]

\begin{proof}[Proof of Proposition \ref{sw-prop}]
We will first demonstrate that the bound $\tau_x\in[R^{-1}M|x|^2,RM|x|]$, as appears in the probability on the left-hand side of (\ref{sw-0}), holds on $\Theta(C,l)$. Suppose that $\Theta(C,l)$ occurs. By the definition of $F_i$, $i=0,1,\dots,2N_M$,
\[\mathrm{LE}(S[0,\tau^S_x])\cap S[\tau^S_x+1,\infty]=\emptyset\]
holds. Thus $x\in L$ and $\tau_x=\mathrm{len}(S[0,\tau^S_x])$. Let $k_i$ be a nice cut time of $S$ in $B_i$ (see Definition \ref{sw-def-4}), and recall that $\xi_i$ and $\xi'_i$ are as defined in (\ref{sw-def-xi}) and (\ref{sw-def-lambda}), respectively. Let
\begin{align*}
	s_i&=\inf\left\{n\ge 0\mathrel{:}\xi_{i-1}(n)\in S[\tau^S(Q_i),\tau^S(Q_{i+1})]\right\},\\
	t_i&=\sup\left\{n\in[\tau^S(Q_i),\tau^S(Q_{i+1})]\mathrel{:}S(n)=\xi_{i-1}(s_i)\right\}.
\end{align*}
Then we have that
\begin{equation}\label{sw-prop1-2}
\lambda_i=\mathrm{LE}\left(S[\tau^S(Q_{i}),k_i]\right)\oplus \mathrm{LE}\left(S[k_i,\tau^S(Q_{i+1})]\right),
\end{equation}
and also
\[\xi_i=\xi_{i-1}[0,s_i]\oplus \mathrm{LE}\left(S[t_i,k_i]\right)\oplus\mathrm{LE}\left(S[k_i,\tau^S(Q_{i+1})]\right)\subseteq \xi_{i-1}\cup S[t_i,k_i]\cup \lambda_{i},\]
for $i=1,2,\dots,N_M-1$, where we have applied (\ref{sw-prop1-2}) for the inclusion. Furthermore,
\[\xi_{N_M}=\xi_{N_M-1}[0,s_{N_M}]\oplus \mathrm{LE}(S[t_{N_M},\tau^S_x])\subseteq \xi_{N_M-1}\cup \lambda_{N_M}.\]
Thus, by induction, it follows that
\[	\bigcup_{i=1}^{N_M-1}\mathrm{LE}\left(S[k_i,\tau^S(Q_{i+1})]\right)\subseteq \xi_{N_M}\subseteq \xi_0\cup \bigcup_{i=1}^{N_M-1}\left(S[\tau^S(Q_i),\tau^S(Q_i(\varepsilon))]\cup \lambda_i\right)\cup \lambda_{N_M}.\]
Note that, on $H_i(l)$, $k'\in K_i$ is a cut time of the path $S[k_i,\tau^S(Q_{i+1})]$, and thus $S(k')\in LE(S[k_i,\tau^S(Q_{i+1})])$.
By the definition of $G_i(C)$ and $H_i(l)$ (see Definitions \ref{sw-def-3} and \ref{sw-def-5}, respectively), we have that
\begin{align*}
	\mathrm{len}(\xi_{N_M})&\ge \sum_{i=1}^{N_M-1} \#K_i \ge \frac12 lM|x|^2,\\
	\mathrm{len}(\xi_{N_M})&\le \mathrm{len}(\xi_0)+\sum_{i=1}^{N_M-1}\left(\#S[\tau^S(Q_i),\tau^S(Q_i(\varepsilon))]+\mathrm{len}(\lambda_i)\right)+\mathrm{len}(\lambda_{N_M})	\notag	\\
	&\le 3(C+\varepsilon^2)M|x|^2,
\end{align*}
on $\Theta(C,l)$. Hence choosing $R$ suitably large gives the desired bound on $\Theta(C,l)$.

Consequently, to complete the proof, it will be enough to show that $\mathbf{P}(\Theta)$ is bounded below by the right-hand side of (\ref{sw-0}). By Lemma \ref{sw-lem-1}, there exist constants $c_5,\varepsilon,l>0$ such that $\inf_{z\in \widetilde{Q}_i}\mathbf{P}^z(H_i(l)\mid F_i)\ge c_5$ for $i=1,2,\dots,N_M$. Moreover, by Lemma \ref{sw-lem-2}, there exists a constant $C>0$ such that $\inf_{z\in \widetilde{Q}_i}\mathbf{P}^z(G_i(C)\mid F_i)\ge 1-c_5/2$ for $i=0,1,\dots,N_M$. Thus we have
\begin{equation}\label{sw-prop1-5.1}
\inf_{z\in\widetilde{Q}_i}\mathbf{P}^z(G_i(C)\cap H_i(l)\mid F_i)\ge \frac{c_5}{2},\qquad i\in\{1,2,\dots,N_M-1\},
\end{equation}
\begin{equation}\label{sw-prop1-5.2}
\inf_{z\in\widetilde{Q}_i}\mathbf{P}^z(G_i(C)\mid F_i)\ge 1-\frac{c_5}{2},\qquad i=0,N_M.	
\end{equation}
As already noted in the proof of Lemma \ref{sw-lem-2}, we also have that
\begin{equation}\label{sw-prop1-6}
\inf_{z\in \widetilde{Q}_i}\mathbf{P}^z(F_i)\ge c_6\varepsilon,
\end{equation}
for all $i=1,2,\dots,N_M-1$, and a similar bound holds for $\mathbf{P}(F_0)$. And, repeating a similar argument to the lower bound for $\mathbf{P}^{z_3}(H_i^{(3)})$ from the proof of Lemma \ref{sw-lem-1}, from (\ref{sw-lem1-2.1}) to (\ref{sw-lem1-3.2}) we have that
\[	\inf_{z\in \widetilde{Q}_{N_M}}\mathbf{P}^z(F_{N_M})\ge c_6M^{2-d}|x|^{2-d},\]
where $c_6>0$ is adjusted if necessary. By combining these estimates on $\mathbf{P}(F_i)$ with (\ref{sw-prop1-5.1}) and (\ref{sw-prop1-5.2}), we obtain that
\[\inf_{z\in\widetilde{Q}_i}\mathbf{P}^z(F_i\cap G_i(C)\cap H_i(l))\ge \frac{c_5c_6\varepsilon}{2},\qquad i\in\{1,2,\dots,N_M-1\},\]
\[\mathbf{P}(F_0\cap G_0(C))\ge \left(1-\frac{c_5}{2}\right)c_6,\]
\[\inf_{z\in\widetilde{Q}_{N_M}}\mathbf{P}^z(F_{N_M}\cap G_{N_M}(C))\ge \left(1-\frac{c_5}{2}\right)c_6M^{2-d}|x|^{2-d}.\]
Furthermore, similarly to the case with $i=1,2,\dots,N_M-1$, it holds that
\[\inf_{z\in \widetilde{Q}_i}\mathbf{P}^z(F_i)\ge c_6\varepsilon,\qquad i\in\{N_M+1,N_M+2,\dots,2N_M\},\]
and it follows from (\ref{srwbound}) that
\[\inf_{z\in\widetilde{Q}_{2N_M+1}}\mathbf{P}^z(U_{N_M})\ge c\]
for some constant $c>0$. Finally, by the strong Markov property, we have that
\begin{align*}
\lefteqn{\mathbf{P}(\Theta(C,l))}\\
&\ge\mathbf{P}(F_0\cap G_0(C))\prod_{i=1}^{N_M-1}\inf_{z\in\widetilde{Q}_i}\mathbf{P}^{z}\left(F_i\cap G_i(C)\cap H_i(l)\right)\\
&\qquad\qquad\times\inf_{z\in\widetilde{Q}_{N_M}}\mathbf{P}^{z}(F_{N_M}\cap G_{N_M}(C))\times \prod_{i=N_M+1}^{2N_M}\inf_{z\in\widetilde{Q}_i}\mathbf{P}^{z}(F_i)\times \inf_{z\in\widetilde{Q}_{2N_M+1}}\mathbf{P}^{z}(U_{N_M})	\\
&\ge CM^{2-d}|x|^{2-d}e^{-\frac{c}{M}}\\
&\ge CM^{1-d/2}|x|^{2-d}e^{-\frac{c}{M}},
\end{align*}
where the third inequality holds simply because $M\leq 1$.
\end{proof}

\subsection{Lower bound for $M\geq 1$}

We now turn to the proof of the lower bound of Proposition \ref{Lprop} with $M\geq 1$. In particular, we will establish the following.

\begin{prop}\label{lower-M-large}
There exist constants $c,c',R\in(0,\infty)$ such that for every $x\in\mathbb{Z}^d\backslash\{0\}$ and $M\geq 1$,
\[\mathbf{P}\left(\tau_x\in[R^{-1}M|x|^2,RM|x|^2]\right)\ge c'(M|x|^2)^{1-d/2}\exp\left(-\frac{c}{M}\right).\]
\end{prop}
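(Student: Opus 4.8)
The plan is to adapt the method of the previous subsection---exhibiting a favourable event for the simple random walk $S$ with $L=\mathrm{LE}(S[0,\infty))$---but now working at the scale $r:=\lfloor\sqrt M\rfloor|x|$. The key observation is that, since $(M|x|^2)^{1-d/2}=(\sqrt M|x|)^{2-d}$ and $e^{-c/M}\asymp 1$ for $M\geq 1$, the target bound is $\mathbf{P}(\tau_x\in[R^{-1}M|x|^2,RM|x|^2])\gtrsim (\sqrt M|x|)^{2-d}$; that is, the cost of producing $x$ on $L$ at a position of order $M|x|^2$ should be comparable to the cost of the loop-erased walk reaching a single site at Euclidean distance $\sqrt M|x|$. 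Before the main construction I would reduce to $M|x|^2\geq K$ for a large constant $K$: in the complementary range only finitely many $x$ occur, $(M|x|^2)^{1-d/2}$ is then bounded above and below, and it suffices that $\mathbf{P}(\tau_x\in[R^{-1}M|x|^2,RM|x|^2])$ be bounded below uniformly over the bounded set of admissible $M$, which holds once $R$ is large because $\mathbf{P}(\tau_x\in[1,C_x])>0$ for each fixed $x\neq 0$ and some $C_x<\infty$ (cf.\ the reduction preceding Proposition \ref{sw-prop}).

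For the main case, set $T:=\tau^S_{B(0,c_0\sqrt M|x|)^c}$ and consider the intersection of three events: (a) $S[0,T]\subseteq B(0,c_1\sqrt M|x|)$, $x\notin S[0,T]$, and $\mathrm{len}(\mathrm{LE}(S[0,T]))\in[c_2M|x|^2,c_3M|x|^2]$; (b) $S[T,\tau^S_x]\subseteq B(0,c_4\sqrt M|x|)$, $S[T+1,\tau^S_x]\cap S[0,T]=\emptyset$, and $\mathrm{len}(\mathrm{LE}(S[T,\tau^S_x]))\leq c_5M|x|^2$; and (c) $S$ does not return to $x$ after time $\tau^S_x$, and $S[\tau^S_x+1,\infty)\cap\mathrm{LE}(S[0,\tau^S_x])=\emptyset$. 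On this intersection, $S[0,\tau^S_x]$ is a concatenation of two paths meeting only at the common endpoint $S_T$, so by the standard behaviour of loop-erasure under such concatenations together with (c) one obtains $x\in L$ and $\tau_x=\mathrm{len}(\mathrm{LE}(S[0,T]))+\mathrm{len}(\mathrm{LE}(S[T,\tau^S_x]))\in[c_2M|x|^2,(c_3+c_5)M|x|^2]$, which has the required form after adjusting $R$.

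It then remains to bound the probability of this intersection below by $c(\sqrt M|x|)^{2-d}$, which I would do piece by piece. Event (a) has probability bounded below by a positive constant: the containment and $x\notin S[0,T]$ are routine for simple random walk in a ball (the latter because $x$ is a single site and $\#S[0,T]\lesssim M|x|^2$ is negligible against $(\sqrt M|x|)^d$), while the two-sided control of $\mathrm{len}(\mathrm{LE}(S[0,T]))$ follows from high-dimensional loop-erased walk estimates (Markov's inequality plus $\mathbf{E}[\mathrm{len}(\mathrm{LE}(S[0,T]))]\asymp M|x|^2$ for the upper bound, and the positive probability of a linear number of cut points---cf.\ \cite{BJ} and \eqref{ds5}---for the lower bound). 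Conditionally on $S[0,T]$, event (c) again has probability bounded below by a constant, since $\mathrm{LE}(S[0,\tau^S_x])$ is a simple path with $O(M|x|^2)$ vertices inside $B(0,c_4\sqrt M|x|)$, hence ``sparse'' (density $O((\sqrt M|x|)^{2-d})\to0$ against the ambient ball), and a transient walk started on such a set escapes to infinity without re-hitting it with uniformly positive probability (cf.\ the escape-probability estimates behind \eqref{srwbound}). The crucial piece is (b): conditionally on $S[0,T]$, the walk restarts at $w:=S_T$ with $|w|\asymp\sqrt M|x|$ and must reach the specific site $x$, which lies at distance $\asymp\sqrt M|x|$ from $w$; by the Green's-function hitting estimates \eqref{srwbound}--\eqref{srwbound2} this has probability $\asymp(\sqrt M|x|)^{2-d}$, and I would show that the extra constraints in (b)---remaining in the enlarged ball, keeping $\mathrm{len}(\mathrm{LE}(S[T,\tau^S_x]))=O(M|x|^2)$, and disjointness from $S[0,T]$---cost only a further positive constant, again using sparsity of $S[0,T]$ and diffusivity of the conditioned walk. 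Multiplying the three bounds yields the proposition.

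The step I expect to be hardest is the loop-erasure book-keeping in (b): making precise that the return leg $S[T,\tau^S_x]$ leaves the loop-erasure built during the excursion essentially intact---so that $x$ really is the tip of $\mathrm{LE}(S[0,\tau^S_x])$ at position $\asymp M|x|^2$---is the reason for imposing $S[T+1,\tau^S_x]\cap S[0,T]=\emptyset$, and checking that this disjointness is compatible, up to constants, with hitting $x$ requires an argument in the spirit of Lemmas \ref{sw-lem-1} and \ref{sw-lem-2}, together with quantitative ``avoid a sparse path'' estimates. A possible simplification of the return leg is to exploit the reversibility of the loop-erased random walk between its endpoints, so that the law of $\mathrm{LE}(S[T,\tau^S_x])$ is analysed as that of a loop-erased walk issued from $x$; I would adopt whichever route turns out to be cleaner.
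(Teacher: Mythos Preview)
Your high-level strategy is right and matches the paper's: work at scale $J\asymp\sqrt{M}\,|x|$, build an excursion whose loop-erasure has length $\asymp M|x|^2$, pay the factor $J^{2-d}=(M|x|^2)^{1-d/2}$ to hit the single site $x$, and then escape. The genuine gap is in your argument for (b). A first-moment ``sparsity'' bound does not show that disjointness from $S[0,T]$ costs only a constant once you condition on the rare event $\{\tau^S_x<\infty\}$. The obstruction is local to the common endpoint $w=S_T$: for $z\in S[0,T]$ with $|z-w|=O(1)$ (e.g.\ $z=S_{T-1}$) one has
\[
\mathbf{P}^w\bigl(\tau^{S'}_z<\infty\bigr)\,\mathbf{P}^z\bigl(\tau^{S'}_x<\infty\bigr)\ \asymp\ |z-x|^{2-d}\ \asymp\ J^{2-d},
\]
already comparable to $\mathbf{P}^w(\tau^{S'}_x<\infty)$, so the expected number of intersections on $\{\tau^{S'}_x<\infty\}$ is of the same order as that event's probability and Markov's inequality yields nothing. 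Put differently, the return leg you analyse is a walk from $w$ \emph{conditioned} to hit $x$, and non-intersection of a conditioned walk with $S[0,T]$ near their common start is a genuine two-path problem, not a density estimate. An analogous (milder) issue occurs in (c) at the endpoint $x$: the escape walk and the loop-erasure are both concentrated there, so again the relevant input is a non-intersection/escape estimate rather than sparsity.

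The paper closes this gap by two devices, one of which you already flag. First, it uses a time-reversal identity for the \emph{simple} random walk (Lemma~\ref{revlem}) to replace the conditioned return leg $S[\tau^0_Q,\tau^0_x]$ by an \emph{unconditioned} independent walk $S^x$ from $x$, run to its last visit to the random tip $\rho=S^0_{\tau^0_Q}$; this removes the $h$-transform and reduces the problem to three independent walks $S^0$, $S^x$, $\widetilde S^x$ (the last being the escape path). Second, it imposes explicit geometric separation rather than relying on density: $S^0$ is forced through an upper half-tube $D_+$ (where the cut points in $\widehat B''$ are manufactured) and $S^x$ through a disjoint lower half-tube $D_-$, while the residual non-intersection of $S^0,S^x,\widetilde S^x$ inside the base box $\widehat B_0\supset\{0,x\}$ is handled by a three-path separation lemma at scale $J$ (Lemma~\ref{SW-separate}). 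Your sketch can be repaired along the same lines---add to (a) a half-space/cone constraint on $S[0,T]$, route the return and escape legs through complementary regions, and make the reversibility step central rather than optional so that the return leg becomes an unconditioned walk from $x$---but as written the ``sparsity plus diffusivity'' justification for (b) does not go through.
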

\bigskip

As in the previous section, the basic strategy involves the construction of a set of particular realisations of $L$ that we can show occurs with suitably high probability. To do this, we will use a certain reversibility property of simple random walk, as is set out in the next lemma. In the statement of this, for a finite path $\lambda = [ \lambda (0), \lambda (1), \cdots , \lambda (m) ]$, we write $\lambda^{R} = [ \lambda (m), \lambda (m-1), \cdots , \lambda (0) ]$ for its time reversal.

\begin{lem}\label{revlem} Let $x,z\in\mathbb{Z}^d$, $x\neq z$, and write $S^{x}$, $S^z$ for independent simple random walks in $\mathbb{Z}^{d}$ started at $x$, $z$, respectively. Moreover, write $\tau_x^z:=\inf\{j:\:S^z_j=x\}$, $\tau_z^x:=\inf\{j:\:S^x_j=z\}$,
\[ \sigma_{1} = \sup \{ j \le \tau^{x}_{z} \::\:S^{x}_{j} = x \},\quad u=\inf\{j\geq \tau_z^x\::\:S^x_j=x\},\quad\sigma_{2} = \sup \{ j<u\::\: S^{x}_{j} = z \}.\]
It then holds that
\begin{equation}\label{claim1}
\left\{ \lambda \::\: \mathbf{P} \left( \left( S^{z} [0, \tau^{z}_{x} ] \right)^{R} = \lambda \:\vline\:\tau^{z}_{x} < \infty \right) > 0 \right\} = \left\{ \lambda \::\: \mathbf{P} \left( S^{x} [\sigma_{1}, \sigma_{2} ]  = \lambda \:\vline\:\tau^{z}_{x} < \infty \right) > 0 \right\},
\end{equation}
and, denoting the set above $\Lambda$,
\begin{equation}\label{claim2}
\mathbf{P} \left( \left( S^{z} [0, \tau^{z}_{x} ] \right)^{R} = \lambda \:\vline\: \tau^{z}_{x} < \infty \right) = \mathbf{P} \left( S^{x} [\sigma_{1}, \sigma_{2} ] = \lambda \:\vline\: \tau^{x}_{z} < \infty \right),\qquad \forall \lambda \in \Lambda.
\end{equation}
\end{lem}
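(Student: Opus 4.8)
The plan is to reduce everything to evaluating, for a fixed nearest‑neighbour path $\lambda=[\lambda(0),\dots,\lambda(m)]$ with $\lambda(0)=x$, $\lambda(m)=z$ and $\lambda(j)\neq x$ for $1\le j\le m$ (necessarily $m\ge 1$ since $x\neq z$), the two probabilities $\mathbf{P}\bigl((S^{z}[0,\tau^{z}_{x}])^{R}=\lambda\bigr)$ and $\mathbf{P}\bigl(S^{x}[\sigma_{1},\sigma_{2}]=\lambda\bigr)$, and showing each equals $(2d)^{-m}$. Granting this, \eqref{claim2} is immediate: both events force $\tau^{z}_{x}<\infty$, resp.\ $\tau^{x}_{z}<\infty$, and $\mathbf{P}(\tau^{z}_{x}<\infty)=\mathbf{P}(\tau^{x}_{z}<\infty)$ by translation invariance and the invariance of simple random walk under $S\mapsto -S$ (equivalently, symmetry of the Green's function), so both sides of \eqref{claim2} equal $(2d)^{-m}/\mathbf{P}(\tau^{z}_{x}<\infty)$; and \eqref{claim1} follows too, since $S^{x}$ and $S^{z}$ are independent, so the right‑hand set in \eqref{claim1} is $\{\lambda:\mathbf{P}(S^{x}[\sigma_{1},\sigma_{2}]=\lambda)>0\}$, while on $\{\tau^{x}_{z}<\infty\}$ the segment $S^{x}[\sigma_{1},\sigma_{2}]$ is by construction a path from $x$ to $z$ which (as $\sigma_{2}<u$) avoids $x$ after its first vertex — so both sets in \eqref{claim1} coincide with the family of $\lambda$ above. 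The first probability is trivial: because $\lambda$ avoids $x$ except at $\lambda(0)$, the reversed path $\lambda^{R}$ avoids $x$ except at its last vertex, so $(S^{z}[0,\tau^{z}_{x}])^{R}=\lambda$ occurs exactly when $S^{z}$ traverses $\lambda^{R}$ in its first $m$ steps, an event of probability $(2d)^{-m}$.

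For the second probability I would split the event over the value $T=\sigma_{1}$. Unwinding the definitions of $\sigma_{1}$, $\tau^{x}_{z}$, $u$ and $\sigma_{2}$, one checks the disjoint decomposition $\{S^{x}[\sigma_{1},\sigma_{2}]=\lambda\}=\bigsqcup_{T\ge 0}A_{T}$, where $A_{T}$ asks that (i) $S^{x}[0,T]$ avoids $z$ and $S^{x}_{T}=x$; (ii) $S^{x}[T,T+m]=\lambda$; and (iii) $S^{x}$ avoids $z$ from time $T+m+1$ up to the first time after $T+m$ at which it returns to $x$ (this time being $+\infty$ if no such return occurs). Applying the Markov property at times $T$ and $T+m$ factorises $\mathbf{P}(A_{T})=\mathbf{P}^{x}\bigl(S[0,T]\text{ avoids }z,\ S_{T}=x\bigr)\cdot(2d)^{-m}\cdot\beta$, where $\beta$ is the probability that a walk from $z$ does not return to $z$ before the (possibly infinite) time it hits $x$; crucially $\beta$ does not depend on $T$. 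Summing over $T$ yields
\[
\mathbf{P}\bigl(S^{x}[\sigma_{1},\sigma_{2}]=\lambda\bigr)=(2d)^{-m}\,\beta\,\gamma,\qquad \gamma:=\sum_{T\ge 0}\mathbf{P}^{x}\bigl(S[0,T]\text{ avoids }z,\ S_{T}=x\bigr),
\]
and $\gamma$ equals the expected number of visits of the walk from $x$ to $x$ strictly before it hits $z$.

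It then remains to verify $\beta\gamma=1$. A one‑line renewal argument identifies $1/\beta$ with the expected number of visits of a walk from $z$ to $z$ before it hits $x$, so $\beta\gamma=1$ is precisely the assertion that the expected number of visits to the starting point before reaching the other point is the same for the walk from $x$ with target $z$ and the walk from $z$ with target $x$; after translating the two points to $0$ and $w:=z-x$ this is the identity $\mathbf{E}^{0}(\#\{\text{visits to }0\text{ before }\tau^{S}_{w}\})=\mathbf{E}^{0}(\#\{\text{visits to }0\text{ before }\tau^{S}_{-w}\})$, which is exactly the invariance of simple random walk under $S\mapsto -S$. Hence $\mathbf{P}(S^{x}[\sigma_{1},\sigma_{2}]=\lambda)=(2d)^{-m}$, completing the argument. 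I expect the only delicate point to be the bookkeeping behind $\{S^{x}[\sigma_{1},\sigma_{2}]=\lambda\}=\bigsqcup_{T}A_{T}$: one must confirm that the two ``last‑exit'' constraints encoded in $\sigma_{1}$ (the last visit to $x$ before $\tau^{x}_{z}$) and in $\sigma_{2}$ (the last visit to $z$ before $u$) are faithfully captured by the $z$‑avoidance conditions on $[0,T]$ and on $[T+m+1,u-1]$, and to handle the case $u=\infty$, where $\sigma_{2}$ records the last visit of $S^{x}$ to $z$ — which is almost surely finite precisely because $d\ge 5$.
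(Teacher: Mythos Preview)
Your proof is correct and follows essentially the same route as the paper's: both compute each of the two (unconditioned) probabilities as $(2d)^{-m}$ by decomposing $\{S^{x}[\sigma_{1},\sigma_{2}]=\lambda\}$ over the value of $\sigma_{1}$, applying the Markov property at the two endpoints, and then using the $S\mapsto -S$ symmetry to cancel the resulting factors. The only cosmetic difference is that the paper writes the cancellation as $\gamma=1/(1-p)$ and $\beta=1-p$ directly (with $p$ the probability the walk from $x$ returns to $x$ while avoiding $z$), whereas you phrase it as an equality of expected numbers of visits; these are the same identity.
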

\begin{proof}
Since \eqref{claim1} is easy to see, we only check \eqref{claim2}. Take $\lambda = [ \lambda (0), \lambda (1), \cdots , \lambda (m) ] \in \Lambda$. Note that $\mathbf{P} ( \tau^{z}_{x} < \infty ) = \mathbf{P} ( \tau^{x}_{z} < \infty )$ by symmetry. It follows that
\[\mathbf{P} \left( \left( S^{z} [0, \tau^{z}_{x} ] \right)^{R} = \lambda, \  \tau^{z}_{x} < \infty \right) = \mathbf{P} \left( S^{z} [0, \tau^{z}_{x} ]  = \lambda^{R}, \  \tau^{z}_{x} < \infty \right) = \mathbf{P} \left( S^{z} [0, \tau^{z}_{x} ]  = \lambda^{R} \right)  = (2d)^{-m}.\]
On the other hand, we have
\begin{align*}
\mathbf{P} \left( S^{x} [\sigma_{1}, \sigma_{2} ] = \lambda, \  \tau^{x}_{z} < \infty \right) &= \mathbf{P} \left( S^{x} [\sigma_{1}, \sigma_{2} ] = \lambda \right) \\
&= \sum_{ k \ge 0}  \mathbf{P} \left( S^{x} [\sigma_{1}, \sigma_{2} ] = \lambda, \ \sigma_{1} = k  \right) \\
&  = \sum_{ k \ge 0}  \mathbf{P} \left( z \notin S^{x} [0, k], \ S^{x}_{k} = x, \ S^{x} [k, k + m] = \lambda, \ \sigma_{2} = k+m \right)  \\
& =  \sum_{ k \ge 0}  \mathbf{P} \left( z \notin S^{x} [0, k], \ S^{x}_{k} = x, \ S^{x} [k, k + m] = \lambda, \
S^{x}_{k +m} = z, \ F \right),
\end{align*}
where $F:= \{ z \notin S^{x} [k+m +1 ,  u')\}$ with $u' = \inf \{ j \ge k+m :\:S^{x}_{j} = x \}$. Therefore, the Markov property ensures that
\begin{align*}
&\mathbf{P} \left( z \notin S^{x} [0, k], \ S^{x}_{k} = x, \ S^{x} [k, k + m] = \lambda, \
S^{x}_{k +m} = z, \ F \right) \\
&= \mathbf{P} \left( z \notin S^{x} [0, k], \ S^{x}_{k} = x \right) \mathbf{P} \left( S^{x} [0,  m] = \lambda \right) \mathbf{P} (F') \\
&= (2d)^{-m} \mathbf{P} \left( z \notin S^{x} [0, k], \ S^{x}_{k} = x \right) \mathbf{P} (F' ),
\end{align*}
where $F':=\{z\not\in S^{z} [1 ,  \tau^z_x]\}$. Writing
\[ \xi_{x} = \inf \{ j \ge 1 :\: S^{x}_{j} = x \} \qquad \text{ and } \qquad p = \mathbf{P} \left( \xi_{x} < \infty, \ z \notin S^{x} [0, \xi_{x}] \right),\]
we note that
$$\sum_{k \ge 0 } \mathbf{P} \left( z \notin S^{x} [0, k], \ S^{x}_{k} = x \right) = \frac{1}{1-p}.$$
Moreover, by symmetry again, it holds that $\mathbf{P} (F')=1-p$. Hence we conclude that
\[\mathbf{P} \left( S^{x} [\sigma_{1}, \sigma_{2} ] = \lambda, \  \tau^{x}_{z} < \infty \right)  = (2d )^{-m},\]
which gives \eqref{claim2}.
\end{proof}

In order to explain our application of the previous result, we need to introduce some notation. First, let $x\in\mathbb{Z}^d\backslash\{0\}$ and $M\geq 1$. Moreover, set $J=C\sqrt{M|x|^2}$ for some $C\ge1$ that will be determined later, and, for $i\in\mathbb{Z}$, write $\widehat{b}_i=(2iJ,0,\dots,0)\in\mathbb{R}^d$ and
\[\widehat{B}_i=B_\infty\left(\widehat{b}_i,J\right),\]
which represent adjacent cubes of side length $2J$. We also introduce the following smaller cubes centred at $b'=(\frac{9}{4}J,\frac{J}{2},0,\dots,0)\in\mathbb{R}^d$,
\[\widehat{B}'=B_\infty(b',{J}/{6}),\qquad \widehat{B}''=B_\infty(b',J/18)\]
Note that $\widehat{B}''\subset\widehat{B}'\subset \widehat{B}_1$. See Figure \ref{cubesfig} for a sketch showing the cubes $\widehat{B}_{-1}$, $\widehat{B}_0$, $\widehat{B}_1$ and $\widehat{B}'$, as well as some of the other objects that we now define. In particular, we introduce a collection of surfaces:
\[Q^*=\left\{\frac{5}{2}J\right\}\times[-J,J]^{d-1},\]
\[Q_*=\{3J\}\times[-J,J]^{d-1},\]
\[Q=\{3J\}\times \left[-\frac{J}{4},\frac{J}{4}\right]\times[-J,J]^{d-2}\subset Q_*,\]
\[\widetilde{Q}=\left\{J\right\}\times[-J,J]^{d-1},\qquad Q'=\left\{(y^1-J/16,y^2,\cdots,y^d)\mathrel{:}(y^1,y^2,\cdots,y^d)\in\widetilde{Q}\right\},\]
\[\widetilde{Q}_{\pm}=\left\{J\right\}\times\left[\pm\frac{J}{2}-\frac{J}{8},\pm\frac{J}{2}+\frac{J}{8}\right]\times[-J,J]^{d-2}\subset\widetilde{Q},\]
\[Q_{-1}=\widehat{B}_{-1}\cap \{-3J\}\times \mathbb{R}^{d-1};\]
the hyperplane $\mathbb{H}^{(1)}_{85J/36}$, where for $a\in\mathbb{R}$ and $i\in\{1,\dots,d\}$, we denote
\[\mathbb{H}^{(i)}_a=\left\{(x_1,\cdots,x_d)\in\mathbb{R}^d\mathrel{:}x_i=a\right\}\]
(see Figure \ref{figure-B'} below for the location of $\mathbb{H}^{(1)}_{85J/36}$ in particular); and also the following regions:
\[D_\pm=\left[-J,\frac{49}{16}J\right]\times[-J,J]^{d-1}\setminus \left[\frac{15}{16}J,\frac{5}{2}J\right]\times\left[-\frac{J}{2}\mp\frac{J}{2},\frac{J}{2}\mp\frac{J}{2}\right]\times[-J,J]^{d-2},\]
\[\widetilde{D}_\pm=D_\pm\cap \left[\frac{15}{16}J,\infty\right)\times\mathbb{R}^{d-1}.\]
We highlight that $D_+$ is shown as the shaded region on Figure \ref{cubesfig}.

\begin{figure}[t]
\centering
\includegraphics[width=0.75\linewidth]{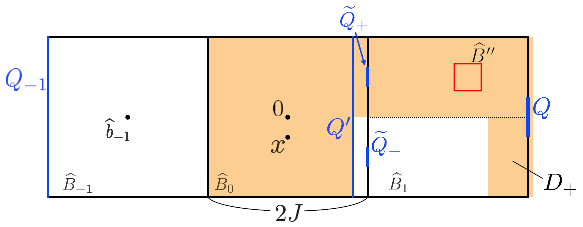}
\vspace{0pt}
\caption{Cubes and other regions appearing in the proof of Proposition \ref{lower-M-large}.}\label{cubesfig}
\end{figure}

Roughly speaking, to establish the main result of this section, we will show that, with high enough probability, the loop-erased random walk $L$ passes from 0 to (somewhere close to) $Q$ through $D_+$, spending a suitable time in $\widehat{B}''$ on the way, before returning to $x$ through $D_-$, and then escapes to $\infty$ via $Q_{-1}$. To make this precise, we will consider an event based on the simple random walk started from $0$; see Figure \ref{roughfig}. Controlling the probability of this will involve an appeal to Lemma \ref{revlem}, through which we obtain a bound that depends on three independent random walks, one started from $0$ and two started from $x$ (see Lemma \ref{3path} below).

\begin{figure}[t]
		\begin{center}
		\includegraphics[width=0.8\linewidth]{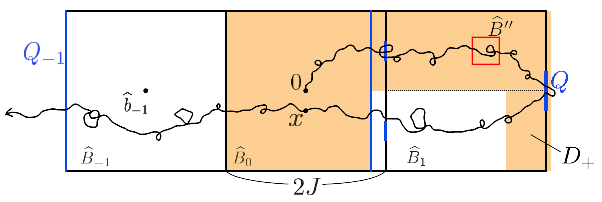}\end{center}
\vspace{-15pt}
		\caption{A sketch of a realisation of $S^0$ yielding $\tau_x\ge M|x|^2$.}\label{roughfig}
\end{figure}

Concerning notation, as in the statement of Lemma \ref{revlem}, for each $z\in\mathbb{Z}^d$, we will write $S^z$ for a simple random walk started from $z$. We assume that the elements of the collection $(S^z)_{z\in\mathbb{Z}^d}$ are independent. We moreover write $(\widetilde{S}^z)_{z\in\mz}$ for an independent copy of  $(S^z)_{z\in\mathbb{Z}^d}$. We also set
\[\tau^z_A:=\inf\{k\ge 0\mathrel{:}S^z_k\in A\},\qquad \sigma^z_A=\sup\{k\ge 0\mathrel{:}S^z_k\in A\},\]
$\tau^z_x=\tau^z_{\{x\}}$ and $\sigma^z_x=\sigma^z_{\{x\}}$. A particularly important point in the argument that follows is given by
\[\rho=S^0_{\tau^0_Q},\]
i.e.\ the location where $S^0$ hits $Q$, which is defined when $\tau_Q^0<\infty$. Additionally, we introduce
\[\widetilde{\tau}=\inf\left\{k\ge 0\mathrel{:}\widetilde{S}^x_k\in \mathbb{R}^d\setminus(\widehat{B}_0\cup\widehat{B}_{-1})\right\},\]
and, to describe a collection of local cut points for a path $\lambda$,
\[\Gamma(\lambda[i,j])=\left\{\lambda(k)\mathrel{:}\lambda[i,k]\cap\lambda[k+1,j]=\emptyset\right\}.\]

The following result gives the key decomposition of the simple random walk underlying $L$ that we will consider later in the section. It already takes into account the time-reversal property of Lemma \ref{revlem}. We will break the complicated event that appears in the statement into several more convenient pieces below.

\begin{lem}\label{3path} In the setting described above, $\mathbf{P}(\tau_x\ge M|x|^2)$ is bounded below by the probability of the following event:
\[\left\{\begin{array}{c}
         \tau_Q^0<\infty,\:S^0[0,\tau^0_Q]\subset D_+,\:S^0[0,\sigma^0_{\widehat{B}''}]\cap \mathbb{H}^{(1)}_{85J/36}=\emptyset,\:S^0[\tau^0_{Q^*},\tau^0_Q]\cap \mathbb{H}^{(1)}_{85J/36}=\emptyset,\\\#(\Gamma(S^0[0,\tau^0_Q])\cap \widehat{B}'')\ge M|x|^2,\:\tau^x_\rho<\infty,\:S^x[0,\sigma^x_\rho]\subset D_{-},\\
        (S^0[0,\tau^0_Q]\cap S^x[0,\sigma^x_\rho])\cap\widehat{B}_0=\emptyset,\:\widetilde{S}^x\cap (S^0[0,\tau^0_Q]\cup S^x[0,\sigma^x_\rho])=\emptyset\end{array}\\\right\}.\]
\end{lem}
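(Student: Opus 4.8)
## Proof proposal for Lemma \ref{3path}

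The plan is to carefully unravel the event $\{\tau_x \ge M|x|^2\}$ into a statement about a single simple random walk $S^0$, and then apply Lemma \ref{revlem} to re-express the portion of the loop-erased path running ``backwards'' from $x$ in terms of two further independent walks started at $x$. First I would observe that on the complicated event displayed in the statement, the loop-erasure $L$ of $S^0[0,\infty)$ is forced to visit $x$, and moreover the number of steps of $L$ before it reaches $x$ is at least $\#(\Gamma(S^0[0,\tau_Q^0])\cap\widehat{B}'')\ge M|x|^2$. Indeed, the conditions that $S^0[0,\tau^0_Q]\subset D_+$ and $S^x[0,\sigma^x_\rho]\subset D_-$, together with the disjointness requirement $(S^0[0,\tau^0_Q]\cap S^x[0,\sigma^x_\rho])\cap\widehat B_0=\emptyset$ and the escape condition $\widetilde S^x\cap(\cdots)=\emptyset$, are precisely what is needed to guarantee (via the cut-point/domain Markov structure) that the local cut points of $S^0$ lying in $\widehat B''$ survive loop-erasure and occur on $L$ strictly before $x$, so that $\tau_x = \mathrm{len}(S^0[0,\tau^0_x]) \ge \#(\Gamma(S^0[0,\tau^0_Q])\cap\widehat B'')\ge M|x|^2$. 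The conditions involving $\mathbb{H}^{(1)}_{85J/36}$ are geometric separation constraints ensuring the relevant pieces of the path stay in the correct sub-regions so that this cut-point argument goes through cleanly.

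Next I would set up the reversal. Write the relevant part of $S^0$'s trajectory as: a piece from $0$ up to the last visit to some reference point, then a piece that reaches $x$, then a piece that continues to $Q$ and escapes. The key point is that, conditionally on hitting $x$, the law of the time-reversed walk run from its hitting of $x$ back towards its start is, by Lemma \ref{revlem} (applied with the roles of $x$ and a suitable $z$), equal to the law of a fresh simple random walk started from $x$, watched between the appropriate last-exit times. Concretely, the portion of $S^0$ that travels ``from $\rho$ back to $x$'' is replaced by an independent walk $S^x$ watched up to $\sigma^x_\rho$, and the independent escape to infinity that keeps $x$ on $L$ is supplied by the third walk $\widetilde S^x$ watched up to $\widetilde\tau$. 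The disjointness and containment conditions on the displayed event are exactly the images, under this reversal, of the conditions on $S^0$ that force $x\in L$ with $\tau_x\ge M|x|^2$; this is why the displayed event, built from the three independent walks $S^0, S^x, \widetilde S^x$, gives a valid lower bound for $\mathbf{P}(\tau_x\ge M|x|^2)$.

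Thus the structure of the proof is: (i) show that on the event in the statement, the underlying configuration of $S^0$ (together with an escaping path) implies $x\in L$ and $\tau_x\ge M|x|^2$ — this is the combinatorial/geometric core, using cut points and the domain Markov property for LERW as in \cite[Proposition 7.3.1]{Lawb}; (ii) apply Lemma \ref{revlem} to rewrite the ``return from $x$'' segment and the ``escape past $x$'' segment in terms of the independent walks $S^x$ and $\widetilde S^x$, checking that the event descriptions transform as claimed; (iii) conclude that $\mathbf{P}(\tau_x\ge M|x|^2)$ dominates the probability of the displayed event. The main obstacle I expect is step (i): one must verify that the particular collection of region constraints — $D_\pm$, the hyperplane $\mathbb{H}^{(1)}_{85J/36}$, the cubes $\widehat B''\subset\widehat B'\subset\widehat B_1$, and the surface $Q$ — genuinely forces the cut points in $\widehat B''$ to lie on $L$ before $\tau_x$, and in particular that no later part of $S^0$ (after $\tau_Q^0$) or of the return path can erase them. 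Keeping track of which last-exit decomposition is being used, and matching it precisely to the hypotheses of Lemma \ref{revlem} (the definitions of $\sigma_1$, $\sigma_2$, $u$ there), is the delicate bookkeeping that the rest of the section will presumably carry out in detail.
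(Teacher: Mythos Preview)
Your proposal has the right ingredients but the logic is inverted in a way that, if taken literally, does not work. You frame step (i) as: ``on the displayed three-walk event, the loop-erasure $L$ of $S^0[0,\infty)$ is forced to visit $x$ with $\tau_x\ge M|x|^2$.'' This is false as stated: $S^x$ and $\widetilde S^x$ are \emph{independent} of $S^0$, so knowing that they avoid certain sets places no constraint whatsoever on $L=\mathrm{LE}(S^0)$. The displayed event is \emph{not} a subset of $\{\tau_x\ge M|x|^2\}$; the relationship between the two is one of probabilities, not of events.

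The paper runs the argument in the opposite (and correct) direction. It first writes down a \emph{single-walk} event for $S^0$ alone,
\[
\left\{\begin{array}{c}
\tau_Q^0<\tau^0_x<\infty,\:S^0[0,\tau^0_Q]\subset D_+,\:\ldots,\:S^0[\tau^0_Q,\tau^0_x]\subset D_-,\\
(S^0[0,\tau^0_Q]\cap S^0[\tau^0_Q,\tau^0_x])\cap\widehat B_0=\emptyset,\:S^0[\tau^0_x,\infty)\cap S^0[0,\tau^0_x]=\emptyset
\end{array}\right\},
\]
and observes that this event \emph{is} a subset of $\{\tau_x\ge M|x|^2\}$ (the geometric conditions you highlight force the local cut points in $\widehat B''$ to be global cut times of $S^0[0,\tau^0_x]$; this is the ``Clearly'' in the paper). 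Only then does it transform the probability: condition on $\rho=z$, apply the strong Markov property at $\tau^0_Q$ and at $\tau^0_x$ to peel off independent pieces $S^z[0,\tau^z_x]$ and $\widetilde S^x$, and finally invoke Lemma~\ref{revlem} to replace $S^z[0,\tau^z_x]$ by $S^x[\sigma_1,\sigma_2]\subset S^x[0,\sigma^x_z]$, summing back over $z$ to land on the displayed event. Each step here is an equality or a lower bound on \emph{probabilities}, never a pathwise inclusion back to $\{\tau_x\ge M|x|^2\}$.

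So your step (ii) is right in content, but it must come \emph{after} a step (i) that concerns a single-walk event, not the three-walk one. What you flag as the ``main obstacle'' (the geometric verification that cut points survive) the paper treats as routine; the actual bookkeeping in the proof of Lemma~\ref{3path} is entirely the Markov/reversal transformation.
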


\begin{proof} Clearly,
\[\left\{\begin{array}{c}
         \tau_Q^0<\tau^0_x<\infty,\:S^0[0,\tau^0_Q]\subset D_+,\:S^0[0,\sigma^0_{\widehat{B}''}]\cap \mathbb{H}^{(1)}_{85J/36}=\emptyset,\:S^0[\tau^0_{Q^*},\tau^0_Q]\cap \mathbb{H}^{(1)}_{85J/36}=\emptyset,\\
         \#(\Gamma(S^0[0,\tau^0_Q])\cap \widehat{B}'')\ge M|x|^2,\: S^0[\tau^0_Q,\tau^0_x]\subset D_{-},\\
(S^0[0,\tau^0_Q]\cap S^0[\tau^0_Q,\tau^0_x])\cap\widehat{B}_0=\emptyset,\:S^0[\tau^0_x,\infty)\cap (S^0[0,\tau^0_Q]\cup S^0[\tau^0_Q,\tau^0_x])=\emptyset
       \end{array}\\\right\}\]
is a subset of the event $\{\tau_x\ge M|x|^2\}$. Now, conditioning on the value of $\rho$ and applying the strong Markov property at times $\tau_Q^0$ and $\tau^0_x$, we have that the probability of the above event is equal to
\[\sum_{z\in Q}\mathbf{P}\left(\begin{array}{c}
         \tau_Q^0<\infty,\:\rho=z,\:\tau^z_x<\infty,\\
         S^0[0,\tau^0_Q]\subset D_+,\:S^0[0,\sigma^0_{\widehat{B}''}]\cap \mathbb{H}^{(1)}_{85J/36}=\emptyset,\:S^0[\tau^0_{Q^*},\tau^0_Q]\cap \mathbb{H}^{(1)}_{85J/36}=\emptyset,\\
         \#(\Gamma(S^0[0,\tau^0_Q])\cap \widehat{B}'')\ge M|x|^2,  \:S^z[0,\tau^z_x]\subset D_{-},\\
         (S^0[0,\tau^0_Q]\cap S^z[0,\tau^z_x])\cap\widehat{B}_0=\emptyset,\:\widetilde{S}^x\cap (S^0[0,\tau^0_Q]\cup S^z[0,\tau^z_x])=\emptyset\end{array}\\\right).\]
Applying Lemma \ref{revlem}, we can replace $\tau^z_x$ and $S^z[0,\tau^z_x]$ in the above expression by $\tau^x_z$ and $S^x[\sigma_1,\sigma_2]$, respectively, where $\sigma_1$, $\sigma_2$ are defined as in the statement of that result. Since $0\leq \sigma_1\leq \sigma_2\leq \sigma^x_z$, it holds that $S^x[\sigma_1,\sigma_2]\subseteq S^x[0,\sigma_z^x]$. Consequently, we obtain that the above sum is bounded below by
\[\sum_{z\in Q}\mathbf{P}\left(\begin{array}{c}
         \tau_Q^0<\infty,\:\rho=z,\:\tau^x_z<\infty,\\
         S^0[0,\tau^0_Q]\subset D_+,\:S^0[0,\sigma^0_{\widehat{B}''}]\cap \mathbb{H}^{(1)}_{85J/36}=\emptyset,\:S^0[\tau^0_{Q^*},\tau^0_Q]\cap \mathbb{H}^{(1)}_{85J/36}=\emptyset,\\
         \#(\Gamma(S^0[0,\tau^0_Q])\cap \widehat{B}'')\ge M|x|^2,\:S^x[0,\sigma_z^x]\subset D_{-},\\
         (S^0[0,\tau^0_Q]\cap S^x[0,\sigma_z^x])\cap\widehat{B}_0=\emptyset,\:\widetilde{S}^x\cap (S^0[0,\tau^0_Q]\cup S^x[0,\sigma_z^x])=\emptyset\end{array}\\\right),\]
and replacing the sum with a union inside the probability completes the proof.
\end{proof}

Now, we will rewrite the event we defined in the statement of Lemma \ref{3path} as the intersection of various smaller events concerning $S^0$, $S^x$ and $\widetilde{S}^x$. For convenience, we will write
\[\eta_0\coloneqq S^0_{\tau^0_{\widetilde{Q}}},\qquad \eta_x\coloneqq S^x_{\tau^x_{\widetilde{Q}}},\qquad\widetilde{\eta}\coloneqq \widetilde{S}^x_{\widetilde{\tau}}\]
in the remainder of this subsection. We moreover define the event $E_1$ by setting
\[E_1=\left\{\begin{array}{c}
    \tau_{\widetilde{Q}}^0<\infty,\:\eta_0\in\widetilde{Q}_+,\:\tau^x_\rho<\infty,\:\eta_x\in\widetilde{Q}_-,\:\widetilde{\tau}<\infty,\:\widetilde{\eta}\in Q_{-1},\\
	(S^0[0,\tau^0_{\widetilde{Q}}]\cap S^x[0,\tau^x_{\widetilde{Q}}])\cap\widehat{B}_0=\emptyset,	\\
	S^0[0,\tau^0_{\widetilde{Q}}]\subset D_+,\:S^x[0,\tau^x_{\widetilde{Q}}]\subset D_{-},\:\widetilde{S}^x[0,\widetilde{\tau}]\cap (S^0[0,\tau^0_{\widetilde{Q}}]\cup S^x[0,\tau^x_{\widetilde{Q}}])=\emptyset\end{array}\right\}.\]
On $E_1$, the paths $S^0$, $S^x$ and $\widetilde{S}^x$ do not have an intersection and move along the different courses until they first exit the union of $\widehat{B}_{-1}$ and $\widehat{B}_0$.

Next, we will define some events that restrict the behavior of $S^0$ after $\tau^0_{\widetilde{Q}}$. First, recall that $b'=(\frac{9}{4}J,\frac{J}{2},0,\cdots,0)\in\mathbb{R}^d$ and $\widehat{B}'=B_\infty(b',\frac{J}{6})$. We define the ``left'' and ``right'' faces of $\widetilde{B}'$ by
\[Q_L=\left\{\frac{25}{12}J\right\}\times\left[\frac{J}{3},\frac{2}{3}J\right]\times\left[-\frac{J}{6},\frac{J}{6}\right]^{d-2},\qquad
	Q_R=\left\{\frac{29}{12}J\right\}\times\left[\frac{J}{3},\frac{2}{3}J\right]\times\left[-\frac{J}{6},\frac{J}{6}\right]^{d-2}.\]
Moreover, we define a subset of $\widetilde{D}_+$ by setting
\[\widehat{B}'_L=\left[\frac{17}{18}J,\frac{79}{36}J\right]\times\left[\frac{J}{3},\frac{2}{3}J\right]\times\left[-\frac{J}{2},\frac{J}{2}\right]^{d-2}.\]
See Figure \ref{figure-B'}. Then, writing $u^y=\inf\{n\ge\tau^y_{\widetilde{Q}}\mathrel{:}S^y_n\in Q'\}$ and
	$\sigma'=\inf\{n\ge\tau^{y}_{\widehat{B}''}\mathrel{:}S^{y}_n\in(\widehat{B}')^c\}$, let
\begin{align}
	F_1(y)=&\left\{\tau^{y}_{Q_L}<u^y,S^{y}[\tau^y_{\widetilde{Q}},\tau^{y}_{Q_L}]\subset\widehat{B}'_L\right\},\label{draft-SW-1}\\
	F_2(y)=&\left\{\begin{array}{c}
\tau^{y}_{\widehat{B}''}<\inf\{n\ge\tau^{y}_{Q_L}\mathrel{:}S^{y}_n\in(\widehat{B}'_L)^c\}<\infty,\:\sigma'\in Q_R,\\
		\#\left\{k\in[\tau^{y}_{\widehat{B}''},\sigma']\mathrel{:}S^{y}[\tau^{y}_{Q_L},k]\cap S^{y}[k+1,\sigma']=\emptyset,\:S^y_k\in\widehat{B}''\right\}\ge M|x|^2\end{array}\right\},\label{draft-SW-2}\\
	F_3(y)=&\left\{\tau^y_{Q^*}<\tau^{y}_Q<\infty,\:S^{y}[\sigma',\tau^{y}_Q]\subset D_+\cap\left[\frac{85}{36}J,\infty\right)\times\mathbb{R}^{d-1}\right\},	\label{draft-SW-3}
\end{align}
and set $E_2=F_1(0)\cap F_2(0)\cap F_3(0)$. In particular, on the event $E_2$, we have the existence of cut points of $S[\tau^y_{Q_L},\sigma']$ contained in $\widehat{B}''$. Finally let
\begin{align*}
	E_3=&\left\{\tau^x_\rho<\infty,\:
	S^x[\tau^x_{\widetilde{Q}},\sigma^x_\rho]\subset \widetilde{D}_{-}\right\},\\
	E_4=&\left\{\widetilde{S}^x[\widetilde{\tau},\infty]\cap (S^0[0,\tau^0_Q]\cup S^x[0,\sigma^x_\rho])=\emptyset\right\},
\end{align*}
be events that restrict the regions where $S^x$ and $\widetilde{S}^x$ can explore, respectively.

\begin{figure}[t]
		\centering
		\includegraphics[width=0.4\linewidth]{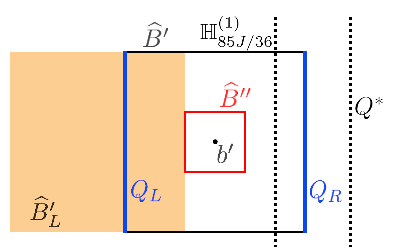}
				\vspace{0pt}
		\caption{Illustration of the sets used in controlling the number of cut points of $S^0$ in $\widehat{B}''$.}\label{figure-B'}
\end{figure}

We continue by checking the local cut points that we construct on the event $F_2(0)$ are cut points of the loop-erasure of $S^0[0,\tau^0_Q]$. Note that on the event $E_2$, it follows from the definition of $F_2(0)$ and $F_3(0)$ that
\[S[\tau^0_{Q^*},\tau^0_Q]\cap \hpl{1}{85J/36}=\emptyset,\qquad S[\sigma',\tau^0_{Q_*}]\cap \widehat{B}''=\emptyset.\]
Moreover, on the event $E_1\cap E_2$, we have that
\begin{itemize}
	\item $S^0[0,\tau^0_{\widetilde{Q}}]\cap Q^*=\emptyset$,
	\item $S^0[\tau^0_{\widetilde{Q}},\tau^0_{Q_L}]\cap Q^*=\emptyset$,
	\item $S^0[\tau^0_{Q_L},\sigma']\cap Q^*=\emptyset$.
\end{itemize}
The first and second statements follow from the definitions of $E_1$ and $F_1(0)$, respectively, while the third statement is derived from the definitions of $F_2(0)$ and $\sigma'$ (recall the definitions of the sets defined above, which are also shown in Figure \ref{figure-B'}). From these statements, we immediately conclude that
\[S^0[0,\sigma']\cap Q^*=\emptyset. \]
For the rest of the path $S^0[0,\tau^0_Q]$, the definition of $F_3(0)$ implies that
\[S^0[\sigma',\tau^0_{Q^*}]\cap \widehat{B}''=\emptyset,\qquad S^0[\tau^0_{Q^*},\tau^0_Q]\cap \hpl{1}{85/36J}=\emptyset.\]
Combining the preceding three statements, we see that, on $E_1\cap E_2$,
\[S^0[0,\sigma^0_{\widehat{B}''}]\cap\hpl{1}{85/36J}=\emptyset,\qquad S^0[\tau^0_{Q^*},\tau^0_Q]\cap \hpl{1}{85J/36}=\emptyset,\]
where we recall that is $\sigma^0_{\widehat{B}''}$ be the last exit time of $\widehat{B}''$ by $S^0$ (we assume here that $S^0$ is stopped at $\tau^0_Q$). Thus, the local cut points of the event $F_2(0)$ are indeed cut points of the loop-erasure of $S^0[0,\tau^0_Q]$ and the probability of the event we defined in the statement of Lemma \ref{3path} is bounded below by
\[\mathbf{P}(E_1\cap E_2\cap E_3\cap E_4).\]
In what follows, we will bound below this probability below. To start with, we will prove that $S^0$, $S^x$ and $\widetilde{S}^x$ do not have an intersection and are separated in a cube with positive probability. Let $T^z_r=\tau^z_{B_\infty(0,r)}$. We define the event $G_n$ by setting
\[G_n=\left\{S^0[0,T^0_n]\cap S^x[0,T^x_n]=S^0[0,T^0_n]\cap \widetilde{S}^x[0,\widetilde{T}^x_n]=S^x[0,T^x_n]\cap \widetilde{S}^x[1,\widetilde{T}^x_n]=\emptyset\right\},\]
and let $Z_n$ be given by
\[\min\left\{d(S^0_{T^0_n},S^x[0,T^x_n]\cup \widetilde{S}^x[0,\widetilde{T}^x_n]), d(S^x_{T^x_n},S^0[0,T^0_n]\cup \widetilde{S}^x[0,\widetilde{T}^x_n]), d(\widetilde{S}^x_{\widetilde{T}^x_n},S^0[0,T^0_n]\cup S^x[0,T^x_n])\right\},\]
where $d$ here is the Euclidean distance, i.e.\ $Z_n$ is the minimum of the distance between the point from which either $S^0$, $S^x$ or $\widetilde{S}^x$ exits $B_\infty(0,n)$ and the union of the other two paths up to their exit times.

\begin{lem}\label{SW-separate}
There exists $c>0$ and $n_0\ge 1$ such that: for all $n\ge n_0$,
\[\mathbf{P}\left(G_n\cap\left\{Z_n\ge\frac{n}{2}\right\}\right)\ge c.\]
\end{lem}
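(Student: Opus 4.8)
The statement is a \emph{separation lemma} for random walks, and the plan is to prove it in two stages: an elementary reduction to a well-separated starting configuration, followed by a standard scale-by-scale propagation argument. Note first that the result will only be applied with $n$ comparable to $J=C\sqrt{M|x|^2}\geq C|x|$, so it is no loss to assume $n\geq 4|x|$, and also $n\geq n_0$ for a constant $n_0$ we are free to enlarge later; in particular the three starting points $0,x,x$ then all lie in $B_\infty(0,n/4)$.

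\emph{De-crowding.} The first step is to show that, with probability at least some constant $c_1>0$ uniform in $x$, the three walks exit $B_\infty(0,R_0)$ --- where $R_0:=(2|x|)\vee R_*$ for a large constant $R_*$ to be fixed below --- in a \emph{good configuration}: their traces inside $B_\infty(0,R_0)$ are pairwise disjoint (with $\widetilde S^x$ considered from time $1$ onwards, so as to discard the trivial coincidence with $S^x$ at time $0$), their current positions are pairwise at Euclidean distance at least $R_0/10$, and each current position is at distance at least $R_0/10$ from the other two traces. For $|x|$ bounded this is a finite non-degenerate event (one can write down three disjoint polygonal paths realising such a configuration), so the probability is a positive minimum over finitely many $x$; for $|x|$ large one realises the analogous configuration for three Brownian motions started from $0$ and $x$ (peeling off the first step of $\widetilde S^x$ so that the two walks from $x$ are effectively started at distinct points) and transfers it by Donsker's invariance principle, Brownian scaling making the bound independent of $|x|$.

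\emph{Propagation and steering.} From a good configuration at radius $m$ one continues the walks and asks for a good configuration at radius $2m$. The estimate that makes this cheap is a first-moment bound: by \eqref{srwbound2} the expected number of visits of a simple random walk to a point at distance $\rho$ is $O(\rho^{2-d})$, and a walk run until it exits $B_\infty(0,m)$ has $O(m^2)$ points on its trace in expectation; hence, since in a good configuration any increment between radii $m$ and $2m$ starts at distance $\gtrsim m$ from the other walks' positions and accumulated traces, the probability that such an increment meets another walk's accumulated trace, or that two such increments meet, is $O(m^2\cdot m^{2-d})=O(m^{4-d})$. As $d\geq 5$ this exponent is negative, so $\sum_k (2^kR_*)^{4-d}<\infty$ and can be made $<1/2$ by taking $R_*$ large. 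Propagating the good configuration from radius $R_0$ up to radius $n/2$ in this way --- controlling the maintenance of \emph{separation} (as opposed to mere non-intersection) by the usual amortisation of the separation cost against the non-intersection cost, as in the classical separation lemma --- therefore incurs a total cost bounded away from $0$, and gives non-intersection together with a good configuration at radius $n/2$ with probability at least $c_1/2$. One further bounded-ratio step (constant probability) then steers the walks so that, staying in three disjoint thin ``spokes'', they exit $B_\infty(0,n)$ within distance $n/20$ of $ne_1$, $-ne_1$ and $ne_2$ respectively; since then the three exit points are pairwise at $\ell_\infty$-distance at least $n$, the part of each trace inside $B_\infty(0,n/2)$ is automatically at distance $\geq n/2$ from every point of $\partial B_\infty(0,n)$, and the outer parts lie near disjoint spokes, this forces $Z_n\geq n/2$, while non-intersection up to radius $n$ gives $G_n$.

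The genuinely delicate point, as always with separation lemmas, is that the bound must not decay with $n$: confining each walk to a tube, or even to a cone, would give only a probability polynomially small in $n/|x|$, and what rescues the argument is precisely that for $d\geq 5$ the per-scale collision probabilities are summable, so that a suitably robust notion of ``good configuration'' can be transported through all $\sim\log(n/|x|)$ scales at bounded total cost. Formulating that robust notion so that loss of separation --- and not only intersection --- is controllable at each scale is where one leans on the established separation-lemma technology for intersections of random walks in high dimensions, and I would quote that machinery rather than reprove it from scratch.
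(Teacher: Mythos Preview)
Your outline can be made to work, but it is considerably more elaborate than what the paper does, and the de-crowding step as written has a technical gap.

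The paper's proof avoids scale-by-scale propagation and separation-lemma machinery entirely. It fixes a large integer $k$ and forces a deterministic event $I_1$: the three walks take their first $k$ steps along prescribed coordinate directions (e.g.\ $S^0$ along $e_2$, $S^x$ along $e_1$, $\widetilde S^x$ along $-e_1$), at cost $(2d)^{-3k}$. After $I_1$ the three current positions are pairwise at distance $\geq k$, and the key computation is that for two independent simple random walks started at mutual distance $\geq k$, the expected number of intersections over \emph{all} subsequent time is bounded by
\[
\sum_{l\geq 1}l\cdot Cl^{-d/2}e^{-ck^2/l}\leq C\sum_{l\geq1}l^{1-d/2}e^{-ck^2/l},
\]
which for $d\geq5$ is finite and tends to $0$ as $k\to\infty$. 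So, conditional on $I_1$, the probability of \emph{any} intersection after time $k$ is below $\varepsilon/2$ once $k$ is large, uniformly in $n$. A steering event $I_2$ of uniform probability $\geq\varepsilon$ (each walk exits $B_\infty(0,n/2)$ through a prescribed half-face and then reaches the corresponding face of $\partial B_\infty(0,n)$ without recrossing certain hyperplanes) then forces $Z_n\geq n/2$, giving $\mathbf{P}(G_n\cap\{Z_n\geq n/2\})\geq(2d)^{-3k}\cdot\varepsilon/2$.

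The point you miss is that in $d\geq5$ the intersection probability is not merely summable scale-by-scale but small \emph{in total} once the walks are separated by a fixed constant $k$; there is no need to transport a good configuration through $\log(n/|x|)$ dyadic scales or to invoke ``amortisation of the separation cost''. That technology is built for critical and low dimensions where per-scale costs do not sum; here one global first-moment bound already does the whole job, and the final separation $Z_n\geq n/2$ is obtained by a single steering step, not by maintaining separation at every scale.

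Your de-crowding via Donsker is also shakier than you suggest. After rescaling by $|x|$, the two walks from $x$ converge to Brownian motions with the \emph{same} starting point (peeling off one lattice step does not survive the scaling), and ``traces disjoint'' is not a continuous functional of the path, so the transfer from the Brownian event to the walk is not immediate. The paper sidesteps this by prescribing the first $k$ steps deterministically rather than appealing to a scaling limit.
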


\begin{proof}
For readability, we assume that $x=(0,|x|,0,\cdots,0)$. (Other cases will follow by a small modification of the argument.) We follow the idea of \cite[Lemma 3.2]{BJ}. Let $e_1=(1,0,0,\cdots,0)\in\mz$ and $e_2=(0,1,0,\cdots,0)\in\mz$. We define the event $I_1$ by setting
\[I_1=\left\{S^0_i=ie_2,\:S_i^x=ie_1,\:\widetilde{S}_i^x=-ie_1\ \mbox{for}\ 1\le i\le k\right\},\]
where $k\ge 1$ will be fixed later. Then we have $\mathbf{P}(I_1)=(2d)^{-3k}$.

We will show that the probability that $S^0$, $S^x$ and $\widetilde{S}^x$ do not intersect before they first exit from $B_\infty(0,n)$ conditioned on $I_1$ is bounded above by arbitrarily small $\varepsilon$ by taking $k$ sufficiently large. Let $K(j)$ be the number of intersections of $S^0[j,T^0_n]$, $S^x[j,T^x_n]$ and $\widetilde{S}^x[j,\widetilde{T}^x_n]$, i.e.\
\[K(j)=\#\left((S^0[j,T^0_n]\cap S^x[j,T^x_n])\cup(S^0[j,T^0_n]\cap \widetilde{S}^x[j,\widetilde{T}^x_n])\cup(S^x[j,T^x_n]\cap \widetilde{S}^x[\max\{1,j\},\widetilde{T}^x_n])\right).\]
Then by the Markov inequality,
\begin{align}
	\mathbf{P}(K(0)>0\mid I_1)&\le \mathbf{P}(K(k)\ge 1\mid I_1)\notag\\
        &\le \mathbf{E}(K(k)\mid I_1)	\notag	\\
		&\le 3\max_{x,y:\:d(x,y)\geq k}\mathbf{E}\left(\#(S^{x}[0,T^{x}_n]\cap S^{y}[0,T^{y}_n])\right)	\notag	\\
		&\le 3\max_{x,y:\:d(x,y)\geq k}\sum_{m=0}^\infty\sum_{m'=0}^\infty\sum_{z\in\mz}\mathbf{P}^{x}(S^{x}_m=z)\mathbf{P}^y(S^y_{m'}=z)	\notag	\\
		&\le 3\max_{x,y:\:d(x,y)\geq k}\sum_{m=0}^\infty\sum_{m'=0}^\infty \mathbf{P}^{x}(S^{x}_{m+m'}=y)	\notag	\\
		&\le 3\sum_{l=1}^\infty l\cdot Cl^{-d/2}e^{-ck^2/l}\notag\\
        &\le C\sum_{l=1}^\infty l^{1-d/2}e^{-ck^2/l},\label{lemma-SW-1}
\end{align}
for some $C$, $c>0$, where we applied the Gaussian estimate for the off-diagonal heat kernel of the simple random walk on $\mz$ for the last inequality (see \eqref{srwbound0}). Since $d\ge 5$, the right-hand side of (\ref{lemma-SW-1}) converges to 0 as $k\to \infty$.

Our next step is to construct subsets where each simple random walk path is constrained to move until it first exits from $B_\infty(0,n)$. We define by
\[H_L=\left\{-\frac{n}{2}\right\}\times\left[-\frac{n}{4},\frac{n}{4}\right]^{d-1},\qquad	H_R=\left\{\frac{n}{2}\right\}\times\left[-\frac{n}{4},\frac{n}{4}\right]^{d-1},\]
the subsets of the left and right face of $B_\infty(0,\frac{n}{2})$ in the direction of $x_1$-axis, respectively, and by
\[H_+=\left[-\frac{n}{4},\frac{n}{4}\right]\times\left\{\frac{n}{2}\right\}\times\left[-\frac{n}{4},\frac{n}{4}\right]^{d-2},\]
the subset of the upper face of $B_\infty(0,\frac{n}{2})$ in the direction of $x_2$-axis. Let
\begin{align*}
	I^0_2&=\left\{S^0_{T^0_{n/2}}\in H_+,\:S^0_{T^0_n}\in\hpl{2}{n},\:S^0[T^0_{n/2},T^0_n]\cap (\hpl{1}{n/3}\cup \hpl{1}{-n/3}\cup \hpl{2}{n/3})=\emptyset\right\},	\\
	I^x_2&=\left\{S^x_{T^x_{n/2}}\in H_R,\:S^x_{T^x_n}\in\hpl{1}{n},\:S^x[T^x_{n/2},T^x_n]\cap (\hpl{1}{n/3}\cup\hpl{2}{n/3})=\emptyset\right\},	\\
	\widetilde{I}^x_2&=\left\{\widetilde{S}^x_{\widetilde{T}^x_{n/2}}\in H_L,\:\widetilde{S}^x_{\widetilde{T}^x_n}\in\hpl{1}{-n},\:\widetilde{S}^x[\widetilde{T}^x_{n/2},\:\widetilde{T}^x_n]\cap(\hpl{1}{-n/3}\cup\hpl{2}{n/3})=\emptyset\right\},
\end{align*}
and $I_2=I_2^0\cap I_2^x\cap \widetilde{I}^x_2$. It is an elementary exercise to check that there exists some $\varepsilon>0$ such that $\mathbf{P}(I_2\mid I_1)\ge \varepsilon$ holds uniformly in $n\ge 1$ and $x\in\mz$. Note that on the event $I_1\cap I_2$, it holds that
\[S^x_{T^x_n}\in\hpl{1}{n},\qquad S^0[0,T^0_n]\cup \widetilde{S}^x[0,\widetilde{T}^x_n]\subset (-\infty,\frac{n}{2}]\times\mathbb{R}^{d-1},\]
so that $d(S^x_{T^x_n},S^0[0,T^0_n]\cup \widetilde{S}^x[0,\widetilde{T}^x_n])\ge \frac{n}{2}$. Similarly, the same bound holds for $d(S^0_{T^0_n},S^x[0,T^x_n]\cup \widetilde{S}^x[0,\widetilde{T}^x_n])$ and $d(\widetilde{S}^x_{\widetilde{T}^x_n},S^0[0,T^0_n]\cup S^x[0,T^x_n])$. Thus we have $Z_n\ge\frac{n}{2}$ on the event in question. Finally, by taking $k$ large so that $\mathbf{P}(K(0)>0\mid I_1)\le \frac{\varepsilon}{2}$, we obtain that
\begin{align*}
	\mathbf{P}\left(G_n\cap\left\{Z_n\ge \frac{n}{2}\right\}\right)&\ge \mathbf{P}(I_1\cap I_2\cap \{K(0)=0\})	\\
	&=\mathbf{P}(I_1)\left(\mathbf{P}(I_2\mid I_1)-\mathbf{P}(K(0)>0\mid I_1)\right)\\
	&\ge (2d)^{-3k}\varepsilon/2,
\end{align*}
which completes the proof.
\end{proof}

We are now ready to complete the proof of the main result of this subsection.

\begin{proof}[Proof of Proposition \ref{lower-M-large}]
For any $R>1$, we have that
\[	\mathbf{P}\left(\tau_x\in[R^{-1}M|x|^2,RM|x|^2]\right)\geq \mathbf{P}\left(\tau_x\geq M|x|^2\right)-\mathbf{P}\left(\tau_x>R M|x|^2\right)\]
Now, by the argument used to prove Proposition \ref{upper-M-large}, we have that
\[\mathbf{P}\left(\tau_x>R M|x|^2\right)\leq C\left(R M|x|^2\right)^{1-d/2}.\]
Thus, by taking $R$ suitably large, and applying Lemma \ref{3path} and the argument above Lemma \ref{SW-separate}, to complete the proof it suffices to prove that
\[\mathbf{P}\left(E_1\cap E_2\cap E_3\cap E_4\right)\ge cJ^{2-d}\]
for some constant $c$.
	
First, we derive an estimate for $\mathbf{P}(E_1)$ from the result of Lemma \ref{SW-separate}. We take $C\ge 1$ large so that $J=C\sqrt{M|x|^2}\ge 3n_0$. We then define the event $F'$ by setting
\[F'=\left\{\begin{array}{c}
             \tau_{\widetilde{Q}}^0<\infty,\:\eta_0\in\widetilde{Q}_+,\:\tau^x_\rho<\infty,\:\eta_x\in\widetilde{Q}_-,\:\widetilde{\tau}<\infty,\:\widetilde{\eta}\in Q_{-1},  \\
             S^0[T^0_{J/3},\tau^0_{\widetilde{Q}}]\cap\left(\hpl{1}{J/9}\cup\hpl{2}{5J/36}\right)=\emptyset,\:
		S^x[T^x_{J/3},\tau^x_{\widetilde{Q}}]\cap\left(\hpl{1}{J/9}\cup\hpl{2}{5J/36}\right)=\emptyset, \\
               \widetilde{S}^x[\widetilde{T}^x_{J/3},\widetilde{\tau}]\cap\hpl{1}{-J/9}=\emptyset
            \end{array}\right\}.\]
Recall the definition of the events $I_1$ and $I_2$ and the random variable $K(j)$ from the proof of Lemma \ref{SW-separate}. It is straightforward to check that if we take $n=J/3$, then
\[I_1\cap I_2\cap \{K(0)=0\}\cap F'\subset E_1.\]
Moreover, by the strong Markov property and the approximation to Brownian motion, there exists some constant $c>0$ such that $\mathbf{P}(F'\mid I_1\cap I_2\cap \{K(0)=0\})\ge c$, uniformly in $x$ and $M$. By Lemma \ref{SW-separate}, we thus obtain that
\begin{equation}
\mathbf{P}(E_1)\ge \mathbf{P}\left(F'\mid I_1\cap I_2\cap \{K(0)=0\}\right)\mathbf{P}(I_1\cap I_2\cap \{K(0)=0\})\ge c^2.\label{proof-SW-0}
\end{equation}

Next we estimate $\mathbf{P}(E_2\mid E_1)$. It follows from the strong Markov property that
\[\mathbf{P}\left(E_2\mid E_1\right)\ge \inf_{a_1\in \widetilde{Q}_+}\mathbf{P}^{a_1}\left(F_1(a_1)\cap F_2(a_2)\cap F_3(a_3)\right),\]
where $F_1(y)$, $F_2(y)$ and $F_3(y)$ are as defined in (\ref{draft-SW-1}), (\ref{draft-SW-2}) and (\ref{draft-SW-3}), respectively. Thus it suffices to bound from below the right-hand side of the above inequality. We begin with a lower bound for $\mathbf{P}^{a_1}(F_1)$. By the gambler's ruin estimate (\ref{gambler}), we have that $\mathbf{P}^{a_1}(F_1)\ge c$ for some universal constant $c>0$. Next, applying a similar argument to that used to obtain (\ref{sw-lem1-7}) in the proof of Lemma \ref{sw-lem-1} and the strong Markov property, we obtain that $\mathbf{P}^{a_1}(F_2\mid F_1)\ge c$. Finally, again by (\ref{gambler}) and the strong Markov property, we have that $\mathbf{P}^{a_1}(F_3\mid F_1\cap F_2)\ge c$.
Since $c>0$ does not depend on $a_1\in\widetilde{Q}_+$, we can conclude that
\begin{equation}\label{proof-SW-2.2}
	\mathbf{P}^{a_1}(E_2\mid E_1)\ge c^3.
\end{equation}

By the independence of $S^0$ and $S^x$ and the strong Markov property, we also have that
\[\mathbf{P}(E_3\mid E_1\cap E_2)\ge\inf_{a_2\in\widetilde{Q}_-,\:z\in Q}\mathbf{P}^{a_2}\left(\tau^{a_2}_z<\infty,\:S^{a_2}[0,\sigma_z^{a_2}]\subset\widetilde{D}_-\right).\]
Let $a_2\in\widetilde{Q}_-$ and $z\in Q$. Again by the strong Markov property,
\begin{equation}
\mathbf{P}^{a_2}\left(\tau^{a_2}_z<\infty,\:S^{a_2}[0,\sigma_z^{a_2}]\subset\widetilde{D}_-\right)=\mathbf{P}^{a_2}\left(\tau^{a_2}_z<\infty,\:S^{a_2}[0,\tau_z^{a_2}]\subset\widetilde{D}_-\right)\mathbf{P}^z\left(S^z[0,\sigma^z_z]\subset\widetilde{D}_-\right).\label{proof-SW-3}
\end{equation}
We will give lower bounds for the two probabilities in the right-hand side. Let $l_{a_2,z}$ be the piecewise linear curve that runs from $a_2$ in the direction of $e_2$ until its second coordinate reaches $5J/2$, and then runs along the line from that point to $z$. Similarly to (\ref{sw-lem1-3.2}) in Lemma \ref{sw-lem-1}, we obtain that
\begin{align}
	\mathbf{P}^{a_2}\left(\tau^{a_2}_z<\infty,S^{a_2}[0,\tau_z^{a_2}]\subset\widetilde{D}_-\right)
		&\ge \mathbf{P}^{a_2}\left(\tau^{a_2}_z<\infty,\mathrm{dist}(S^{a_2}(k),l_{a_2,z})\le J/16\ \mbox{for all}\ k\in[0,\tau^{a_2}_z]\right)	\notag	\\
		&\ge cJ^{2-d},\label{proof-SW-4}
\end{align}
uniformly in $a_2$ and $z$ for some $c>0$. Furthermore, we have that
\begin{align*}
\mathbf{P}^z\left(S^z[0,\sigma^z_z]\subset\widetilde{D}_-\right)&\ge 1-\mathbf{P}^z\left(S^z[0,\sigma^z_z]\cap B(z,J/16)^c\neq\emptyset\right)	\\
		&\ge 1-\sup_{w\in\partial B(z,J/16)}\mathbf{P}^w(\tau^w_z<\infty)	\\
		&\ge 1-\frac{a}{G(0)}J^{2-d},
\end{align*}
where we applied (\ref{srwbound2}) with $n\to\infty$ to the last inequality. Thus, by increasing the value of the constant $C>0$ in $J=C\sqrt{M|x|^2}$ if necessary, we have that
\begin{equation}
\mathbf{P}^z\left(S^z[0,\sigma^z_z]\subset\widetilde{D}_-\right)\ge c,	\label{proof-SW-5}
\end{equation}
for some uniform constant $c>0$. Plugging (\ref{proof-SW-4}) and (\ref{proof-SW-5}) into (\ref{proof-SW-3}) yields that
\begin{equation}
\mathbf{P}(E_3\mid E_1\cap E_2)\ge cJ^{2-d}.	\label{proof-SW-6}
\end{equation}

Now we will give a lower bound for $\mathbf{P}(E_4\mid E_1\cap E_2\cap E_3)$. Recall that $Q_{-1}=\widehat{B}_{-1}\cap \{-3J\}\times \mathbb{R}^{d-1}$. By the strong Markov property and the definition of the events $E_1$, $E_2$ and $E_3$, we have that
\[\mathbf{P}(E_4\mid E_1\cap E_2\cap E_3)\ge \inf_{a_3\in Q_{-1}}\widetilde{\mathbf{P}}^{a_3}(\widetilde{S}^{a_3}[0,\infty]\cap (D_+\cup D_-)=\emptyset).\]
From this, it is an easy application of (\ref{srwbound}) to deduce that
\begin{equation}
\mathbf{P}\left(E_4\mid E_1\cap E_2\cap E_3\right)\ge c,	\label{proof-SW-7}
\end{equation}
for some universal constant $c>0$.

Finally by multiplying (\ref{proof-SW-0}), (\ref{proof-SW-2.2}), (\ref{proof-SW-6}) and (\ref{proof-SW-7}), we obtain the desired lower bound.
\end{proof}

\section{Heat kernel estimates for the associated random walk}\label{sec4}

The aim of this section is to prove Theorem \ref{mainres}. As explained in the introduction, the main input concerning the loop-erased random walk will be Theorem \ref{thm:main1}. To estimate $\mathbb{P}(X_t^\mathcal{G}=x)$ using the decomposition at \eqref{decomp}, we also require control over $P^\mathcal{G}(X_t^\mathcal{G}=L_m)$, where in this section $(L_m)_{m\geq 0}$ is always the infinite LERW started from 0. Since the structure of the graph $\mathcal{G}$ is simply that of $\mathbb{Z}_+$ equipped with nearest-neighbour bonds, we have the obvious identity
\[P^\mathcal{G}(X_t^\mathcal{G}=L_m)=q_t(0,m),\]
where $(q_t(x,y))_{x,y\in\mathbb{Z}_+,\:t>0}$ gives the transition probabilities of the continuous-time simple random walk on $\mathbb{Z}_+$ with unit mean holding times. For this, we have the following estimates from \cite{Barbook}. (We note that although the result we will cite in \cite{Barbook} is stated for the simple random walk on $\mathbb{Z}$, it is easy to adapt to apply to the half-space $\mathbb{Z}_+$.)

\begin{lem}\label{qtlem}
For any $\varepsilon > 0$, there exist constants $c_1,c_2,c_3,c_4,c_5,c_6\in(0,\infty)$ such that for every $m\in\mathbb{Z}_+$ and $t\geq \varepsilon m$,
\[q_t(0,m)\leq c_1\left(1\wedge t^{-1/2}\right)\exp\left(-\frac{c_2m^2}{1\vee t}\right)\]
and also
\[q_t(0,m)\geq c_3\left(1\wedge t^{-1/2}\right)\exp\left(-\frac{c_4m^2}{1\vee t}\right).\]
Moreover, for $m\geq 1$ and $t<\varepsilon m$, we have that
\[q_t(0,m)\leq c_5\exp\left(-c_6m\left(1+\log(m/t)\right)\right).\]
\end{lem}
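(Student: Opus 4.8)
The plan is to reduce everything to the corresponding heat kernel estimates for the continuous-time simple random walk on the full line. First I would note that if $(Y_t)_{t\ge 0}$ is the continuous-time simple random walk on $\mathbb{Z}$ with unit mean exponential holding times started from $0$, then $(|Y_t|)_{t\ge 0}$ is precisely the continuous-time walk on the half-line graph $\mathbb{Z}_+$ of the lemma (at the degree-one vertex $0$ the process $|Y|$ jumps deterministically to $1$, at every other vertex it jumps to each neighbour with probability $1/2$, and the holding times are unit mean exponentials throughout). Writing $\tilde q_t(x,y)$ for the transition probabilities of $Y$, this yields $q_t(0,0)=\tilde q_t(0,0)$ and, by the symmetry $\tilde q_t(0,m)=\tilde q_t(0,-m)$, $q_t(0,m)=2\tilde q_t(0,m)$ for $m\ge 1$. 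Hence it suffices to prove the three displayed bounds with $q_t(0,m)$ replaced by $\tilde q_t(0,m)$, the harmless factor $2$ being absorbed into the constants; this is the precise sense in which the result for $\mathbb{Z}$ is ``adapted'' to $\mathbb{Z}_+$.

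For the first two bounds I would distinguish the cases $t\ge 1$ and $t<1$. When $t\ge 1$ we have $1\wedge t^{-1/2}=t^{-1/2}$ and $1\vee t=t$, and the required two-sided estimate $c_3 t^{-1/2}e^{-c_4m^2/t}\le\tilde q_t(0,m)\le c_1 t^{-1/2}e^{-c_2 m^2/t}$ is the classical Gaussian heat kernel bound for the continuous-time simple random walk on $\mathbb{Z}$, which one can quote directly from \cite{Barbook} (the graph $\mathbb{Z}$ being volume doubling and satisfying a Poincaré inequality), or derive from the discrete-time estimates at \eqref{srwbound0} by Poissonisation, or read off from the asymptotics of $\tilde q_t(0,m)=e^{-t}I_m(t)$. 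The hypothesis $t\ge\varepsilon m$ enters only in the lower bound: it keeps $m^2/t$ within the range in which the usual chaining of the near-diagonal lower bound $\tilde q_t(0,m)\gtrsim t^{-1/2}$ (valid for $m\lesssim\sqrt t$) along a geodesic still yields Gaussian rather than genuinely sub-Gaussian decay, with $c_3,c_4$ then depending on $\varepsilon$. When $t<1$ the upper bound is immediate from $\tilde q_t(0,m)\le 1$; for the lower bound, if $m\ge 1$ then $\varepsilon m\le t<1$ forces $m<1/\varepsilon$, so $m$ is bounded in terms of $\varepsilon$, and the event that the walk makes exactly $m$ jumps in $[0,t]$, each in the $+1$ direction, has probability at least $e^{-1}\varepsilon^m2^{-m}/m!=:c_\varepsilon>0$, while for $m=0$ we use $\tilde q_t(0,0)\ge\mathbf{P}(\text{no jump by time }t)=e^{-t}\ge e^{-1}$.

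For the third bound, the point is that to reach the vertex $m\ge1$ by time $t$ the walk must make at least $m$ jumps, so $\tilde q_t(0,m)\le\mathbf{P}(N_t\ge m)$ with $N_t$ Poisson of mean $t$. Assuming (as we may) that $\varepsilon\in(0,1)$, I would then use, for $0<t<\varepsilon m$, the elementary tail bound $\mathbf{P}(N_t\ge m)\le(1-t/m)^{-1}e^{-t}t^m/m!\le(1-\varepsilon)^{-1}\exp(-m(\log(m/t)-1+t/m))$ together with the fact that $\log a-1+1/a\ge c(\varepsilon)\,(1+\log a)$ for every $a\ge1/\varepsilon$ (the relevant ratio being continuous and positive on $[1/\varepsilon,\infty)$ and tending to $1$ at infinity, hence bounded below by a positive constant), to obtain $\tilde q_t(0,m)\le c_5 e^{-c_6m(1+\log(m/t))}$ with $c_5,c_6$ depending only on $\varepsilon$. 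Pairs $(m,t)$ with $t$ comparable to or larger than $m$ (where $1+\log(m/t)$ is of order one or negative) are either vacuous or fall under the Gaussian upper bound already established, which covers them after adjusting constants.

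The only place requiring genuine care is the Gaussian lower bound in the first two estimates: one must check that the chaining of the near-diagonal lower bound remains effective out to $m\asymp t$ and track the $\varepsilon$-dependence of the resulting constants, and then carry out the routine but slightly fiddly bookkeeping needed to confirm that the three stated inequalities between them cover every pair $(m,t)$.
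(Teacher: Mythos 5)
The reflection trick (whereby the half-line walk is $|Y|$ for $Y$ the walk on $\mathbb{Z}$, giving $q_t(0,m)=2\tilde q_t(0,m)$ for $m\geq 1$ and $q_t(0,0)=\tilde q_t(0,0)$) is a clean way to make precise the paper's terse remark that the $\mathbb{Z}$ estimates adapt to $\mathbb{Z}_+$, and your direct Poisson-tail treatment of the third bound is a perfectly good elementary alternative to citing \cite[Theorem 6.28(c)]{Barbook}.

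However, your treatment of the first two (Gaussian) bounds has a genuine gap: you assert that for $t\geq 1$ the two-sided Gaussian estimate is classical, and that ``$t\geq\varepsilon m$ enters only in the lower bound''. Neither claim is correct. The classical two-sided Gaussian bound for continuous-time simple random walk on $\mathbb{Z}$ (which is what \cite[Theorem 6.28(b)]{Barbook} actually gives) is valid only for $t\geq 1\vee m$; once $t<m$ the heat kernel becomes Poissonian, $\tilde q_t(0,m)\asymp\exp(-cm(1+\log(m/t)))$, and the Gaussian \emph{upper} bound $\tilde q_t(0,m)\leq c_1 t^{-1/2}e^{-c_2 m^2/t}$ is in fact \emph{false} when $m\gg t$, since $m^2/t=m\cdot(m/t)$ grows strictly faster than $m\log(m/t)$ as $m/t\to\infty$, so the Gaussian expression is eventually smaller than the true transition probability. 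The hypothesis $t\geq\varepsilon m$ is therefore indispensable for the upper bound as well as the lower: for $\varepsilon<1$ it guarantees $1\leq 1+\log(m/t)\leq 1+\log(1/\varepsilon)$ throughout the regime $\varepsilon m\leq t<m$, which makes the Poissonian form uniformly comparable, with $\varepsilon$-dependent constants, to $e^{-cm}$ and hence to $(1\wedge t^{-1/2})e^{-cm^2/(1\vee t)}$. This is exactly the route the paper takes for $m\geq 1$, $t\in(\varepsilon m,m)$: it applies \cite[Theorem 6.28(c)]{Barbook} and then this comparison of forms. To close the gap in your argument you would need to run a similar comparison, for both the upper and the lower bound, on the range $\varepsilon m\leq t<m$ rather than quoting the Gaussian estimate there.
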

\begin{proof}
From \cite[Theorem 6.28(b)]{Barbook}, we obtain the relevant bounds for $t\geq 1\vee m$. Moreover, the bounds for $m=0$, $t\in (0,1)$, follow from \cite[Theorem 6.28(d)]{Barbook}. As for $m\geq 1$, $t\in (\varepsilon m,m)$, we can apply \cite[Theorem 6.28(c)]{Barbook} to deduce that $q_t(0,m)$ is bounded above and below by an expression of the form:
\[c\exp\left(-c^{-1}m\left(1+\log(m/t)\right)\right).\]
This can be bounded above and below by an expression of the form $c\exp(-c^{-1}m)$, and that in turn by $c(1\wedge t^{-1/2})\exp(-\frac{c^{-1}m^2}{1\vee t})$, uniformly over the range of $m$ and $t$ considered. This completes the proof of the first two inequalities in the statement of the lemma. The third inequality is given by again applying \cite[Theorem 6.28(c)]{Barbook}.
\end{proof}

We are now ready to proceed with the proof of Theorem \ref{mainres}.

\begin{proof}[Proof of Theorem \ref{mainres}]
Clearly, if $x=0$, then Lemma \ref{qtlem} immediately yields
\[\mathbb{P}\left(X_t^\mathcal{G}=x\right)=q_t(0,0)\asymp 1\wedge t^{-1/2},\]
which gives the result in this case.

We next suppose $x\neq 0$. In this case, applying Lemma \ref{qtlem} with $\varepsilon =1$, we find that
\begin{eqnarray}
  \mathbb{P}\left(X_t^\mathcal{G}=x\right) &=& \sum_{m=1}^\infty \mathbb{P}\left(X_t^\mathcal{G}=L_m\right)\mathbf{P}\left(L_m=x\right)\nonumber\\
  &=&\sum_{m=1}^\infty q_t(0,m)\mathbf{P}\left(L_m=x\right)\nonumber\\
   &\leq &c_1\left(1\wedge t^{-1/2}\right)\sum_{m=1}^t\exp\left(-\frac{c_2m^2}{1\vee t}\right)\mathbf{P}\left(L_m=x\right)\nonumber\\
   &&\qquad +c_3\sum_{m={t+1}}^\infty \exp\left(-c_4m\left(1+\log(m/t)\right)\right)\mathbf{P}\left(L_m=x\right).\label{twosums}
   \end{eqnarray}
Now, the second sum here is readily bounded as follows:
\[c_3\sum_{m={t+1}}^\infty \exp\left(-c_4m\left(1+\log(m/t)\right)\right)\mathbf{P}\left(L_m=x\right)\leq c_3\sum_{m={t+1}}^\infty \exp\left(-c_4m\right)\leq c_3\exp\left(-c_5t\right).\]
Moreover, since we are assuming $t\geq \varepsilon|x|\geq \varepsilon$, the final expression is readily bounded above by one of the form
\[c_6\left(1\wedge|x|^{2-d}\right)\left(1\wedge t^{-1/2}\right)\exp\left(-c_7\left(\frac{|x|^4}{1\vee t}\right)^{1/3}\right).\]
Thus, to complete the proof of upper bound in the statement of Theorem \ref{mainres}, it remains to derive a similar bound for the first sum on the right-hand side at \eqref{twosums}. For this, we have that
   \begin{eqnarray}
     \lefteqn{c_1\left(1\wedge t^{-1/2}\right)\sum_{m=1}^t\exp\left(-\frac{c_2m^2}{1\vee t}\right)\mathbf{P}\left(L_m=x\right)}\nonumber\\
     &\leq &c_1\left(1\wedge t^{-1/2}\right)\sum_{k=0}^\infty\exp\left(-\frac{c_2(2^k)^2}{1\vee t}\right)\sum_{m=2^k}^{2^{k+1}-1}
\mathbf{P}\left(L_m=x\right)\nonumber\\
 &\leq &c_1\left(1\wedge t^{-1/2}\right)\sum_{k=0}^\infty\exp\left(-\frac{c_2(2^k)^2}{1\vee t}\right)
(2^k)^{1-d/2}\exp\left(-\frac{c_3|x|^2}{2^k}\right)\nonumber\\
 &\leq &c_1\left(1\wedge t^{-1/2}\right)\sum_{m=1}^\infty m^{-d/2}\exp\left(-\frac{c_2m^2}{1\vee t}-\frac{c_3|x|^2}{m}\right)\nonumber\\
 &\leq &c_1\left(1\wedge t^{-1/2}\right)\int_{1}^\infty u^{-d/2}\exp\left(-\frac{c_2u^2}{1\vee t}-\frac{c_3|x|^2}{u}\right)du,\label{b1}
\end{eqnarray}
where we have applied Theorem \ref{thm:main1} for the second inequality. To bound the integral, we first note that, for any $\delta>0$, it is possible to find a constant $C<\infty$ such that $a^{d/2}\leq Ce^{\delta a}$ for all $a\geq 0$. In particular, choosing $\delta=c_3/2$, this implies that
\[u^{-d/2}=|x|^{-d}\left(\frac{u}{|x|^2}\right)^{-d/2}\leq C|x|^{-d}\exp\left(\frac{c_3|x|^2}{2u}\right).\]
Hence, applying this estimate and the change of variable $v=u/((1\vee t)|x|^2)^{1/3}$, we obtain
\begin{eqnarray}
\lefteqn{\int_{1}^\infty u^{-d/2}\exp\left(-\frac{c_2u^2}{1\vee t}-\frac{c_3|x|^2}{u}\right)du}\nonumber\\
&\leq&C|x|^{-d}\int_{1}^\infty \exp\left(-\frac{c_2u^2}{1\vee t}-\frac{c_3|x|^2}{2u}\right)du\nonumber\\
&\leq &C|x|^{2-d} \left(\frac{|x|^4}{1\vee t}\right)^{-1/3}\int_{0}^\infty \exp\left(-\left(c_2v^2+\frac{c_3}{2v}\right)\times \left(\frac{|x|^4}{1\vee t}\right)^{1/3}\right)dv.\label{b2}
\end{eqnarray}
Now, let $f(v):=c_2v^2+\frac{c_3}{2v}$, and note that this is a function that has a unique minimum $v_0$ on $(0,\infty)$ such that $f(v_0)>0$. Thus, for $|x|^4\geq1\vee t$, the remaining integral above is estimated as follows:
\begin{eqnarray*}
\lefteqn{\int_{0}^\infty \exp\left(-\left(c_2v^2+\frac{c_3}{2v}\right)\times \left(\frac{|x|^4}{1\vee t}\right)^{1/3}\right)dv}\\
&\leq& \int_{0}^\infty \exp\left(-\left(f(v)-f(v_0)\right)\right)dv\exp\left(-f(v_0)\left(\frac{|x|^4}{1\vee t}\right)^{1/3}\right)\\
&=&C\exp\left(-c\left(\frac{|x|^4}{1\vee t}\right)^{1/3}\right).
\end{eqnarray*}
Putting this together with \eqref{b1} and \eqref{b2}, we deduce the desired result in the range $|x|^4\geq 1\vee t$. If $|x|^4< 1\vee t$, then we follow a simpler argument to deduce:
\begin{eqnarray}
  \mathbb{P}\left(X_t^\mathcal{G}=x\right) &=& \sum_{m=1}^\infty q_t(0,m)\mathbf{P}\left(L_m=x\right)\nonumber\\
  &\leq &c_1\left(1\wedge t^{-1/2}\right) \sum_{m=1}^\infty \mathbf{P}\left(L_m=x\right)\nonumber\\
  &=&c_1\left(1\wedge t^{-1/2}\right)\mathbf{P}\left(L_m=x\mbox{ for some }m\geq 0\right)\nonumber\\
  &\leq&c_1\left(1\wedge t^{-1/2}\right)\mathbf{P}\left(S_m=x\mbox{ for some }m\geq 0\right)\nonumber\\
  &\leq&c_1\left(1\wedge t^{-1/2}\right)|x|^{2-d},\nonumber
\end{eqnarray}
where we have applied Lemma \ref{qtlem} for the first inequality, and \eqref{srwbound2} for the third. This is enough to establish that the upper bound of Theorem \ref{mainres} holds in this case as well.

For the lower bound when $x\neq 0$, we follow a similar argument to the upper bound, but with additional care about the range of summation/integration. In what follows, we set $\alpha=c_4/c_3$, where, here and for the rest of the proof, $c_3$, $c_4$ are the constants of Theorem \ref{thm:main1}. Clearly, we can assume that $c_3\leq 1<c_4$, so that $\alpha>1$. Recall that we are also assuming $t\geq \varepsilon|x|$, and without loss of generality, we may suppose $\varepsilon\in (0,1)$. Applying the bounds of Lemma \ref{qtlem} with $\varepsilon$ given by
\[\varepsilon':=\min\left\{\frac{\varepsilon}{1+\alpha^2},\frac{\alpha\varepsilon^{4/3}}{4c_4(1+\alpha^2)}\right\},\]
we deduce that
\begin{eqnarray*}
\mathbb{P}\left(X_t^\mathcal{G}=x\right) &=& \sum_{m=1}^\infty q_t(0,m)\mathbf{P}\left(L_m=x\right)\\
&\geq&c\left(1\wedge t^{-1/2}\right)\sum_{m=1}^{\lfloor t/\varepsilon'\rfloor}\exp\left(-\frac{Cm^2}{1\vee t}\right) \mathbf{P}\left(L_m=x\right)\\
&\geq&c\left(1\wedge t^{-1/2}\right)\sum_{k=0}^{\lfloor \log_\alpha(\lfloor t/\varepsilon'\rfloor )\rfloor-1}\exp\left(-\frac{C(\alpha^k)^2}{1\vee t}\right)\sum_{m=\lceil \alpha^k\rceil}^{\lfloor \alpha^{k+1}\rfloor}\mathbf{P}\left(L_m=x\right),
\end{eqnarray*}
where for the second inequality, we have applied that
\[\left[1,\lfloor t/\varepsilon'\rfloor\right]\supseteq \bigcup_{k=0}^{\lfloor \log_\alpha(\lfloor t/\varepsilon'\rfloor )\rfloor-1}\left[\alpha^k,\alpha^{k+1}\right]\]
and the observation that each $m$ can appear in at most two of the intervals $[\lceil \alpha^k\rceil,\lfloor \alpha^{k+1}\rfloor]$. (We also note that our choice of $c$ ensures $\lfloor \log_\alpha(\lfloor t/\varepsilon'\rfloor )\rfloor-1\geq 1$, and so the sum is non-empty.) Consequently, applying  Theorem \ref{thm:main1} with $n=\alpha^k/c_3$, we find that
\begin{eqnarray*}
\mathbb{P}\left(X_t^\mathcal{G}=x\right)&\geq &c\left(1\wedge t^{-1/2}\right)\sum_{k=0\vee \lceil\log_\alpha (c_3|x|)\rceil}^{\lfloor \log_\alpha(\lfloor t/\varepsilon'\rfloor )\rfloor-1}\exp\left(-\frac{C(\alpha^k)^2}{1\vee t}\right)(\alpha^k)^{1-d/2}\exp\left(-\frac{C|x|^2}{\alpha^k}\right)\\
&\geq &c\left(1\wedge t^{-1/2}\right)\sum_{m=1\vee \lceil \alpha c_3 |x|\rceil}^{\lfloor\alpha^{-1}\lfloor t/\varepsilon'\rfloor\rfloor } m^{-d/2}\exp\left(-\frac{Cm^2}{1\vee t}-\frac{C|x|^2}{m}\right)\nonumber\\
 &\geq &c\left(1\wedge t^{-1/2}\right)\int_{2c_4|x|}^{\alpha t/(1+\alpha^2)\varepsilon'} u^{-d/2}\exp\left(-\frac{Cu^2}{1\vee t}-\frac{C|x|^2}{u}\right)du,\nonumber
\end{eqnarray*}
where we have used that $ 1\vee \lceil \alpha c_3 |x|\rceil= 1\vee \lceil c_4 |x|\rceil=\lceil c_4 |x|\rceil$ to obtain the bottom limit of the integral, and the choice of $\varepsilon'$ to obtain the top one. Making the change of variable $v=u\varepsilon^{4/3}/((1\vee t)|x|^2)^{1/3}$ yields a lower bound for the integral of
\[c\left(((1\vee t)|x|^2)^{1/3}\right)^{1-d/2}\int_{2c_4\varepsilon \times\left(\varepsilon|x|/t\right)^{1/3}}^{\frac{\alpha\varepsilon^{7/3}}{(1+\alpha^2)\varepsilon'}\times\left(t/\varepsilon|x|\right)^{2/3}}
v^{-d/2}\exp\left(-C\left(v^2+\frac{1}{v}\right)\times\left(\frac{|x|^4}{1\vee t}\right)^{1/3}\right)dv,\]
and, since $t\geq \varepsilon |x|$, our choice of $\varepsilon'$ implies that this is bounded below by
\begin{eqnarray*}
\lefteqn{c\left(((1\vee t)|x|^2)^{1/3}\right)^{1-d/2}\int_{2c_4\varepsilon}^{4c_4\varepsilon}
v^{-d/2}\exp\left(-C\left(v^2+\frac{1}{v}\right)\times\left(\frac{|x|^4}{1\vee t}\right)^{1/3}\right)dv}\\
&\geq &c((1\vee t)|x|^2)^{1/3-d/6}\exp\left(-C\left(\frac{|x|^4}{1\vee t}\right)^{1/3}\right).\hspace{140pt}
\end{eqnarray*}
Hence, if $|x|^4\geq 1\vee t$, we can put the pieces together to find that
\begin{eqnarray*}
\mathbb{P}\left(X_t^\mathcal{G}=x\right)&\geq &c\left(1\wedge t^{-1/2}\right)((1\vee t)|x|^2)^{1/3-d/6}\left(\frac{|x|^4}{1\vee t}\right)^{1/3-d/6}\exp\left(-C\left(\frac{|x|^4}{1\vee t}\right)^{1/3}\right)\\
&=&c\left(1\wedge t^{-1/2}\right)|x|^{2-d}\exp\left(-C\left(\frac{|x|^4}{1\vee t}\right)^{1/3}\right),
\end{eqnarray*}
as required. Finally, for $|x|^4<1\vee t$, continuing to suppose that $c_4$ is the constant of Theorem \ref{thm:main1}, we have that
\begin{eqnarray*}
\mathbb{P}\left(X_t^\mathcal{G}=x\right)&\geq &\sum_{m=1}^{\lfloor\sqrt{4c_4^2t}\rfloor}q_t(0,m)\mathbf{P}\left(L_m=x\right)\\
&\geq &c\left(1\wedge t^{-1/2}\right) \sum_{m=1}^{\lfloor\sqrt{4c_4^2t}\rfloor} \mathbf{P}\left(L_m=x\right)\\
  &  \geq&c\left(1\wedge t^{-1/2}\right)\sum_{m=\lceil c_3\lceil|x|\rceil\rceil}^{\lfloor c_4\lceil|x|\rceil\rfloor} \mathbf{P}\left(L_m=x\right)\\
  &\geq&c\left(1\wedge t^{-1/2}\right)|x|^{2-d},
\end{eqnarray*}
where we have applied Lemma \ref{qtlem} with $\varepsilon=1/4c_4^2$ for the second inequality, that $c_4\lceil|x|\rceil\leq 2c_4|x|\leq \sqrt{4c_4^2t}$ for the third, and Theorem \ref{thm:main1} for the final one.
\end{proof}

\subsection*{Acknowledgements} DC was supported by JSPS Grant-in-Aid for Scientific Research (C) 19K03540 and the Research Institute for Mathematical Sciences, an International Joint Usage/Research Center located in Kyoto University. DS was supported by JSPS Grant-in-Aid for Scientific Research (C) 22K03336, JSPS Grant-in-Aid for Scientific Research (B) 22H01128 and 21H00989. SW is supported by JST, the Establishment of University Fellowships Towards the Creation of Science Technology Innovation, Grant Number JPMJFS2123.

\bibliographystyle{amsplain}
\bibliography{biblio}

\end{document}